\theoremstyle{plain}
\newtheorem{thm}{Theorem}
\newtheorem{prop}[thm]{Proposition}
\newtheorem{lem}[thm]{Lemma}
\newtheorem{clm}[thm]{Claim}
\theoremstyle{remark}
\newtheorem*{rmk}{Remark}
\newtheorem*{rmks}{Remarks}
\theoremstyle{definition}
\newenvironment{alist}
{

\begin{enumerate}}
{\end{enumerate}}
\newenvironment{rlist}
{

\begin{enumerate}}
{\end{enumerate}}
\newenvironment{Rlist}
{

\begin{enumerate}}
{\end{enumerate}}
\newenvironment{assume}
{

\begin{enumerate}}
{\end{enumerate}}
\renewcommand{\a}{{\alpha}}
\renewcommand{\b}{{\beta}}
\newcommand{\g}{{\gamma}}
\renewcommand{\d}{{\delta}}
\newcommand{\e}{{\varepsilon}}
\newcommand{\z}{{\zeta}}
\renewcommand{\t}{{\theta}}
\renewcommand{\l}{{\lambda}}
\newcommand{\G}{{\Gamma}}
\newcommand{\R}{{\mathbb R}}
\newcommand{\N}{{\mathbb N}}
\newcommand{\Z}{{\mathbb Z}}
\newcommand{\C}{{\mathbb C}}
\newcommand{\T}{{\mathbb T}}
\newcommand{\D}{{\mathbb D}}
\newcommand{\E}{\mathbb E}
\renewcommand{\P}{\mathbb P}
\newcommand{\cH}{{\mathcal H}}
\newcommand{\cK}{{\mathcal K}}
\newcommand{\cP}{{\mathcal P}}
\newcommand{\cN}{{\mathcal N}}
\newcommand{\cD}{\mathcal{D}}
\newcommand{\SU}{\mathrm{SU}}
\newcommand{\dd}{\mathrm{d}}
\DeclareMathOperator{\spn}{span}
\DeclareMathOperator{\sinc}{sinc}
\DeclareMathOperator{\Vr}{Var}
\DeclareMathOperator{\Exp}{Exp}
\DeclareMathOperator{\re}{Re}
\DeclareMathOperator{\GMC}{GMC}
\newcommand{\abs}[1]{\lvert#1\rvert}
\newcommand{\Abs}[1]{\left\lvert#1\right\rvert}
\newcommand{\norm}[1]{\lVert#1\rVert}
\newcommand{\wik}[1]{:\mathrel{#1}:}
\newcommand{\wikk}[2]{\ensuremath{:\mathrel{|#1|^{2#2}}:}}
\newcommand{\wikkk}[4]{\ensuremath{:\mathrel{{#1}^{#2}{\overline{#3}}^{#4}}:}}
\newcommand{\integllim}[3]{\int_{#1} \! #2 \, \mathrm{d}#3}
\newcommand{\integlim}[4]{\int_{#1}^{#2} \! #3 \, \mathrm{d}#4}
\newcommand{\ex}[1]{\E\left[#1\right]}
\newcommand{\var}[1]{\Vr\left[#1\right]}
\newcommand{\pr}[1]{\P\left[#1\right]}
\renewcommand{\Re}[1]{\re\left(#1\right)}
\title[Gaussian complex zeroes are not always normal]{Gaussian complex zeroes are not always normal:\protect\\ limit theorems on the disc}
\author{Jeremiah Buckley}
\author{Alon Nishry}
\address{J. Buckley, Department of Mathematics, King's College London, United Kingdom}
\email{jeremiah.buckley@kcl.ac.uk}
\address{A. Nishry, School of Mathematical Sciences, Tel Aviv University, Tel Aviv 69978, Israel}
\email{alonish@tauex.tau.ac.il.}
\thanks{The research of JB is supported in part by EPSRC New Investigator Award EP/V002449/1. The research of AN is supported in part by ISF Grant 1903/18.}
\subjclass{Primary: 30B20, 60F05, 60G15. Secondary: 60G55}
\keywords{Gaussian analytic functions, stationary point processes, Wiener chaos}
\begin{document}
\begin{abstract}
    We study the zeroes of a family of random holomorphic functions on the unit disc, distinguished by their invariance with respect to the hyperbolic geometry. Our main finding is a transition in the limiting behaviour of the number of zeroes in a large hyperbolic disc. We find a normal distribution if the covariance decays faster than a certain critical value. In contrast, in the regime of `long-range dependence' when the covariance decays slowly, the limiting distribution is skewed. For a closely related model we emphasise a link with Gaussian multiplicative chaos.
\end{abstract}
\maketitle
\section{Introduction}

\subsection{Statement of results}
We are interested in the zeroes of the random holomorphic functions
\begin{equation*}
    f_{0}(z)=\sum_{m=1}^{\infty}\frac{1} {\sqrt{m}}\z_{m}z^{m}\quad \text{ and }\quad f_{L}(z)=\sum_{m=0}^{\infty}\sqrt{\frac{\Gamma(L+m)} {\Gamma(L)m!}}\z_{m}z^{m}\quad\text{ for }L>0,
\end{equation*}
where $\{ \z_{m} \}$ is a sequence of iid $\cN_{\C}(0,1)$ standard complex Gaussians and $z$ belongs to the unit disc $\D$. The distribution of $f_L$ as a Gaussian analytic function (GAF) on $\D$ is determined by its covariance kernel
\begin{equation}\label{eq: cov}
    K_L(z,w)=\ex{f_L(z)\overline{f_L(w)}}=
    \begin{cases}
    (1-z\bar{w})^{-L}, &\text{if }L>0;\\
    \log\frac1{1-z\bar{w}}, &\text{if }L=0.
    \end{cases}
\end{equation}
A short computation of covariance kernels shows that if $\psi: \D \to \D$ is a disc automorphism then
\[
 f_0\circ\psi - (f_0\circ\psi)(0) \overset{d}{=} f_0\quad\text{ and}\quad f_L\circ\psi \cdot (\psi')^{L/2} \overset{d}{=} f_L \quad\text{ for }L>0,
\]
where $\overset{d}{=}$ denotes equality in distribution as Gaussian processes. Since $\psi'$ is a deterministic non-vanishing function, this means that for $L>0$ the zeroes of $f_L$ form a stationary point process in $\D$. Furthermore, if we fix the intensity of the zero process (equivalently $L$), that is, the mean number of zeroes per unit hyperbolic area, then $f_L$ is essentially the only GAF with this property. For further details see \cite{GAFbook}*{Chapter 2}. 

The functions $f_L$ have arisen in different contexts. Diaconis and Evans \cite{DE}*{Example 5.6} showed that the function $f_2$ (up to normalisation) arises as the limit of the logarithmic derivative of the characteristic polynomial of a random $n\times n$ unitary matrix, for large $n$. Peres and Vir\'{a}g \cite{PV} showed that the zeroes of $f_1$ form a determinantal process, and used this to describe statistical properties of the zero set. Chhaibi and Najnudel \cite{CN} recently showed a relation between the `boundary values' of the function $f_0$ and a certain limit of the circular $\b$ ensemble.

Let $n_{L}(r)$ be the number of zeroes of $f_{L}$ in the
disc $D(0,r)$ for $0<r<1$. In this article we will describe the fluctuations of $n_{L}(r)$ about its mean as $r\to1$. This mean can be computed via the Edelman-Kostlan formula, see \cite{GAFbook}*{Section~2.4}. For $L>0$, the asymptotic growth of the variance was studied in \cite{B} and one of the interesting features is a transition at the value $L=\frac12$. In this article we show that $n_{L}(r)$ satisfies a CLT for $L\ge\frac12$ while we find non-Gaussain behaviour for $L<\frac12$ (which we describe explicitly).

We note that (for the variance estimates, see \cite{B} for $L>0$ and Section~\ref{sec: var comput} for $L=0$)
\begin{equation}\label{eq: var est}
    \ex{n_L(r)}\overset{r\to1}{\sim}
    \begin{cases}
    \frac{1}{2(1-r)\log\frac1{1-r}}, &L=0;\\
    \frac{L}{2(1-r)}, &L>0,
    \end{cases}\:\text{and}\:
    \var{n_L(r)}\overset{r\to1}{\simeq}
    \begin{cases}
    \frac{1} {\left(1-r\right)^{2} \left(\log\frac{1}{1-r}\right)^{4}}, & L=0,\\
    \frac{1}{(1-r)^{2(1-L)}}, &L\in(0,\tfrac12);\\
    \frac{1}{1-r}\log\frac1{1-r}, &L=\frac12;\\
    \frac{1}{1-r}, &L>\frac12.
    \end{cases}
\end{equation}
In order to state our results we write
\begin{equation*}
    \hat{n}_{L}(r)=\frac{n_{L}(r)-\ex{n_{L}(r)}} {\sqrt{\var{n_{L}(r)}}}
\end{equation*}
for the normalised version of $n_{L}(r)$, put
\begin{equation}\label{eq: amL def}
    a_{m,0}=
    \begin{cases}0 & m=0,\\
    \frac{1}{m} & m\ge1,
    \end{cases}
\quad\text{ and }\quad a_{m,L}=\frac{\Gamma(L+m)} {\Gamma(L)m!} \quad\text{ for }L>0,
\end{equation}
and introduce the random variable $X_{L}=\sum_{m=0}^{\infty}a_{m,L}\left(\left|\z_{m}\right|^{2}-1\right)$ for $0\le L<\frac{1}{2}$. By Stirling's approximation we have $\frac{\Gamma(L+m)}{m!}\overset{m\to\infty}{\sim}m^{L-1}$
so that $\sum_{m=0}^{\infty}a_{m,L}^{2}<\infty$ for such $L$
and so the sum defining $X_{L}$ converges almost surely.
\begin{thm}\label{thm: main}\phantom{h}
\begin{rlist}
    \item \label{part: L ge 1/2} If $L\ge\frac{1}{2}$ is fixed, then we have $\hat{n}_{L}(r)\to\cN_{\R}\left(0,1\right)$ (the standard Gaussian) in law, as $r\to 1$.
    \item \label{part: L to half} If $L\to\frac{1}{2}$ and $r\to1$ simultaneously, then $\hat{n}_{L}(r)\to\cN_{\R}\left(0,1\right)$ in law.
    \item \label{part: 0<L<half} If $0\le L<\frac{1}{2}$ is fixed, then we have $\hat{n}_{L}(r)\to-c_{L}X_{L}$ in $L^{2}$ as $r\to1$ where
    \[
    c_{L}^2 = \left( \sum_{m=0}^\infty a_{m,L}^2 \right)^{-1} = \begin{cases} \frac{6}{\pi^2}, & L=0;\\
    \frac{\Gamma (1-L)^2}{\Gamma (1-2 L)}, &  0<L< \frac12.
    \end{cases}
    \]
\end{rlist}
\end{thm}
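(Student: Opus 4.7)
My plan combines Jensen's formula with the Wiener chaos decomposition of $\log|f_L|^2$. Setting $g_L(z) = f_L(z)/\sqrt{K_L(z,z)}$ (a standard complex Gaussian at each $z$), Jensen's formula gives
\begin{equation*}
    n_L(r) - \E n_L(r) = \frac{r}{2}\frac{d}{dr}\int_0^{2\pi}\bigl[\log|g_L(re^{i\theta})|^2 + \gamma\bigr]\frac{d\theta}{2\pi},
\end{equation*}
with $\gamma$ the Euler--Mascheroni constant (making the integrand mean zero). The integrand decomposes in Wiener chaos as $\log|g_L(z)|^2 + \gamma = \sum_{q\ge 1}T_q(z)$, with $T_q$ in the $q$-th complex chaos generated by $\{\z_m\}$; the elementary calculation $\E[\log|g|^2(|g|^2 - 1)] = 1$ for $g \sim \cN_\C(0,1)$ identifies the leading projection as $T_1(z) = |g_L(z)|^2 - 1$. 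This yields an orthogonal decomposition $n_L(r) - \E n_L(r) = \sum_{q\ge 1}N_q(r)$.

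To analyse $N_1(r)$, expand $|g_L(z)|^2 = K_L(z,z)^{-1}\sum_{m,n}\sqrt{a_{m,L}a_{n,L}}\,\z_m\bar\z_n z^m\bar z^n$; angular integration kills off-diagonal terms by orthogonality of $e^{i(m-n)\theta}$, and after differentiating in $r$ one obtains
\begin{equation*}
    N_1(r) = \sum_{m\ge 0}b_m(r)(|\z_m|^2 - 1), \quad b_m(r) = a_{m,L}r^{2m}(1-r^2)^{L-1}\bigl[m(1-r^2) - Lr^2\bigr]
\end{equation*}
for $L > 0$ (with an analogous formula for $L = 0$ involving $\log\frac{1}{1-r^2}$). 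For part~\ref{part: 0<L<half}, $0 \le L < \tfrac12$: since $\sum_m a_{m,L}^2 < \infty$, dominated convergence together with the pointwise asymptotic $b_m(r)/\sqrt{\var{n_L(r)}} \to -c_L a_{m,L}$ as $r \to 1$ (the constants matching those implicit in \eqref{eq: var est}) gives $N_1(r)/\sqrt{\var{n_L(r)}} \to -c_L X_L$ in $L^2$. The higher chaos contribute negligibly: by the Kahane-type identity
\begin{equation*}
    \sum_{q \ge 2}\var{N_q(r)} = \frac{1}{16\pi^2}\sum_{q\ge 2}\frac{1}{q^2}\iint \Delta\varphi_r(z)\Delta\varphi_r(w)|\rho_L(z,w)|^{2q}\,dA(z)\,dA(w),
\end{equation*}
where $\rho_L(z,w) = K_L(z,w)/\sqrt{K_L(z,z)K_L(w,w)}$ and $\varphi_r$ is a smooth approximation of $\ind_{D(0,r)}$, combined with the estimates of \cite{B} (and the analogous variance computation for $L = 0$), one checks $\sum_{q\ge 2}\var{N_q(r)} = o(\var{n_L(r)})$.

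For parts~\ref{part: L ge 1/2} and~\ref{part: L to half}, $L \ge \tfrac12$: each chaos component satisfies a CLT individually. For $N_1(r)$, a sum of independent variables, no single mode $m$ dominates (since $\sum_m a_{m,1/2}^2$ diverges logarithmically, and for $L > \tfrac12$ the effective range of $m$ grows to infinity as $r \to 1$), so Lindeberg's condition yields $N_1(r)/\sqrt{\var{N_1(r)}} \to \cN_\R(0,1)$. For $q \ge 2$, the Nualart--Peccati fourth moment theorem reduces the CLT for $N_q(r)$ to showing $\E N_q(r)^4 - 3(\var{N_q(r)})^2 = o((\var{N_q(r)})^2)$, which amounts to estimating contraction integrals of $|\rho_L|^{2q}$. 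The Peccati--Tudor joint CLT for orthogonal chaos then delivers $\hat n_L(r) \to \cN_\R(0,1)$; for part~\ref{part: L to half} these bounds must be uniform in $L$ as $L \to \tfrac12$. The main obstacle is establishing the contraction/fourth moment estimates for the higher chaos at and near the critical exponent $L = \tfrac12$, requiring sharp control of the integrals of $|\rho_L(z,w)|^{2q}$ on $\D$ via hyperbolic invariance, with particular care for the logarithmic correction present in $\var{n_{1/2}(r)}$.
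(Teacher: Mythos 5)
Your overall architecture is the same as the paper's: the Wiener chaos decomposition of $n_L(r)-\ex{n_L(r)}$ obtained from $\log|\widehat f_L|^2$, the explicit diagonal formula for the degree-two component (your $N_1(r)$ is exactly the paper's $n_L(r;1)$, with the same coefficients $b_m(r)$), $\ell^2$-dominated convergence of $b_m(r)/\sqrt{\var{n_L(r)}}$ to $-c_L a_{m,L}$ for $L<\tfrac12$, and the Peccati--Tudor fourth moment theorem for the higher chaoses when $L>\tfrac12$. Two points need attention, one minor and one substantive. The minor one: Peccati--Tudor applies to a finite vector of chaos components, so you must supplement it with a tail bound uniform in the truncation level, of the form $\sum_{\a>M}\var{N_\a(r)}\le C M^{-1/2}\var{n_L(r)}$ for $r$ close to $1$; this is not automatic from "each $N_q$ is asymptotically normal" and is where the paper spends Lemma~\ref{lem: bound integral 2nd moment}.

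The substantive gap is your treatment of the critical cases $L=\tfrac12$ and $L\to\tfrac12$. You propose to run the fourth-moment/contraction estimates for every chaos component $N_q$, $q\ge2$, "with particular care" at the critical exponent. But the relevant estimate genuinely degenerates there: for $\tfrac12<L<\tfrac34$ the irregular-diagram (off-diagonal) contribution to $\ex{N_q(r)^4}$ is of order $(1-r)^{4L-4}\cdot(1-r)^{2}\simeq(1-r)^{4L-2}$ against $\ex{N_q(r)^2}^2\simeq(1-r)^{-2}$, i.e.\ a relative error $(1-r)^{4L-2}\cdot(1-r)^{2}=(1-r)^{4L}/(1-r)^{2}$ which is $o(1)$ only for $L>\tfrac12$ and is $O(1)$ at $L=\tfrac12$. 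So the fourth-moment criterion is not verified for the higher chaoses at $L=\tfrac12$ by any natural estimate, and this route stalls exactly where the theorem is most delicate. The correct mechanism (and the one the paper uses) is different: at and near $L=\tfrac12$ the total variance acquires a logarithmic (or near-logarithmic) enhancement, $\var{n_{1/2}(r)}\simeq(1-r)^{-1}\log\tfrac1{1-r}$, which comes entirely from the degree-two component, while $\ex{N_q(r)^2}\simeq(1-r)^{-1}$ for $q\ge2$; hence the higher chaoses are negligible in $L^2$ and no CLT for them is required. One then only needs asymptotic normality of $N_1(r)$, a sum of independent centred variables, which follows from an elementary fourth-moment (or Lyapunov) computation reducing to $\sum_m a_{m,L}^4\,b_m(r)^4=o\big((\sum_m a_{m,L}^2 r^{4m})^2\big)$. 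Your proposal should be restructured so that $L=\tfrac12$ and $L\to\tfrac12$ are handled by second-chaos domination rather than by higher-chaos CLTs.
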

\begin{rmks}\phantom{h}
\begin{enumerate}
    \item The case $L=1$ of this theorem is \cite{PV}*{Corollary~3~(iii)}. It was proved using the determinantal structure, and so the methods do not apply to other values of $L$.
    
    \item For $L < \tfrac12$ the limit $X_L$ is determined by the `boundary values' of the process $f_L$, we shall elaborate on this remark in Section~\ref{sec: GMC}.
    \item Using Lyapunov's criterion, one can check that $\sum_{m=0}^{N}a_{m,L} \left(\left|\z_{m}\right|^{2}-1\right)$ obeys a CLT when $N\to\infty$, for $L\ge\frac{1}{2}$. This is essentially the reason for Gaussian behaviour when $L\to\tfrac12$.
\end{enumerate}
\end{rmks}

It is clear that $X_{L}$ is non-Gaussian, e.g., since $\ex{X_{L}^{3}} = 2\sum_{m=0}^{\infty} a_{m,L}^{3}\neq0$. In the case $L=0$ a direct computation using characteristic functions shows the limiting distribution is Gumbel, while for $0<L<\frac12$ we give sharp estimates on the decay of the tail probability. Since $\hat{n}_{L}(r)\to-c_{L}X_{L}$, this means that the `left' tail of $X_L$ corresponds to the `right' tail of $\hat{n}_L$ and vice-versa.
\begin{thm}\label{thm: tail}\phantom{a}
\begin{rlist}
    \item \label{thm: tail Gumbel part} $X_0$ is a Gumbel distributed random variable with mean $0$ and variance $\frac{\pi^2}{6}$.
    \item \label{thm: tail L>0 part} If $0<L<\frac12$ then $\P[X_L>x] = (\kappa_L +o(1)) e^{-x}$  and $\log\P[X_L<-x] = - (\l_L+o(1)) x^{1/L}$ as $x\to\infty$, where
    \begin{equation*}
        \kappa_L=\frac1e \prod_{m=1}^{\infty} \frac{e^{-a_{m,L}}}{1-a_{m,L}} \qquad\text{and}\qquad\l_L= L \, \Gamma(L)^{1/L} \left(-\sinc\left(\frac{\pi}{1-L}\right)\right)^{\frac{1}{L}-1}.
    \end{equation*}
\end{rlist}
\end{thm}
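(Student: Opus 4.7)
For part (i), I would compute the characteristic function of $X_0$ directly. Since the $|\z_m|^2$ are iid $\mathrm{Exp}(1)$ and $\ex{e^{it(|\z|^2-1)}} = e^{-it}/(1-it)$, independence gives
\[
\ex{e^{itX_0}} = \prod_{m=1}^\infty \frac{e^{-it/m}}{1-it/m}.
\]
The Weierstrass product $\Gamma(1-it) = e^{i\g t}\prod_{m\ge 1} e^{-it/m}/(1-it/m)$, with $\g$ the Euler--Mascheroni constant, identifies the right-hand side as $e^{-i\g t}\Gamma(1-it)$, which is the characteristic function of $G - \g$ where $G$ is a standard Gumbel random variable. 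The asserted mean $0$ and variance $\pi^2/6$ then follow from the known mean $\g$ and variance $\pi^2/6$ of $G$.

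For the right tail in part (ii) I would isolate the contribution of the maximal coefficient $a_{0,L}=1$. Writing $X_L = (|\z_0|^2-1) + Y_L$ with $Y_L := \sum_{m\ge 1}a_{m,L}(|\z_m|^2-1)$ independent of $\z_0$, and using $\pr{|\z_0|^2 > s} = e^{-s}$ for $s \ge 0$,
\[
\pr{X_L > x} = e^{-x-1}\,\ex{e^{Y_L}\ind_{\{Y_L \le x+1\}}} + \pr{Y_L > x+1}.
\]
The ratio $a_{m+1,L}/a_{m,L} = (m+L)/(m+1) < 1$ (valid for $L<1$) and $a_{1,L}=L<1/2$ show that every $a_{m,L}$ with $m\ge 1$ lies in $(0,1/2)$, so $\ex{e^{tY_L}} = \prod_{m\ge 1}e^{-ta_{m,L}}/(1-ta_{m,L})$ is finite at some $t_0>1$; a Chernoff bound then gives $\pr{Y_L>x+1} = o(e^{-x})$. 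Dominated convergence yields $\ex{e^{Y_L}\ind_{\{Y_L\le x+1\}}}\to\ex{e^{Y_L}}$, and the identity $\kappa_L = e^{-1}\ex{e^{Y_L}}$ produces the claimed product formula.

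For the left tail I would analyse the cumulant generating function
\[
\Lambda(t) := \log\ex{e^{-tX_L}} = \sum_{m=0}^\infty \bigl[ta_{m,L} - \log(1+ta_{m,L})\bigr], \qquad t>-1.
\]
Using the Stirling asymptotic $a_{m,L}\sim \Gamma(L)^{-1}m^{L-1}$, I would compare the sum with $\int_0^\infty [ts m^{L-1}-\log(1+ts m^{L-1})]\,\dd m$ where $s=\Gamma(L)^{-1}$; the substitution $u = ts m^{L-1}$ and integration by parts reduce it to the Euler integral $\int_0^\infty u^{1-\b}/(1+u)\,\dd u = -\pi/\sin(\pi\b)$ with $\b := 1/(1-L)\in(1,2)$ (convergent precisely on this range), yielding
\[
\Lambda(t) \sim \frac{t^\b}{\b\,\Gamma(L)^\b\,(-\sinc(\pi\b))} \qquad\text{as }t\to\infty.
\]
Chernoff's inequality $\pr{X_L<-x}\le \exp(-tx+\Lambda(t))$, optimised at the saddle point $t_x$ with $\Lambda'(t_x)=x$ (so $t_x\asymp x^{(1-L)/L}$), produces the upper bound $\log\pr{X_L<-x}\le -(\l_L+o(1))x^{1/L}$ with the stated $\l_L$. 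A matching lower bound to logarithmic accuracy comes from an exponential tilt: under $\dd\Q_{t_x}/\dd\P\propto e^{-t_xX_L}$ the $|\z_m|^2$ remain independent exponentials (with inflated rates $1+t_xa_{m,L}$), so $-X_L$ has mean $x$ and variance $\Lambda''(t_x)=O(x^{2-1/L})\to 0$; a Lindeberg CLT under $\Q_{t_x}$ gives $\Q_{t_x}[-X_L\in(x,\,x+\sqrt{\Lambda''(t_x)})]\ge c>0$, whence the change-of-measure formula yields $\pr{X_L<-x}\gtrsim \exp(-t_xx+\Lambda(t_x))$ up to a correction $t_x\sqrt{\Lambda''(t_x)} = O(x^{1/(2L)})=o(x^{1/L})$ in the exponent. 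The main obstacle is the sharp asymptotic analysis of $\Lambda(t)$ with the correct leading constant, which requires replacing $a_{m,L}$ by $\Gamma(L)^{-1}m^{L-1}$ in the sum with error $o(t^\b)$ uniformly in $m$ and handling the boundary terms in the integration by parts; once $\Lambda(t)$ is pinned down, the Cram\'er-type upper and lower bounds are routine.
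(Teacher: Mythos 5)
Your proof of part \eqref{thm: tail Gumbel part} is exactly the paper's: both identify $\varphi_{X_0}(t)=\prod_{m\ge1}e^{-it/m}/(1-it/m)$ with $e^{-i\gamma_e t}\Gamma(1-it)$ via the Weierstrass product. For part \eqref{thm: tail L>0 part}, however, you take a genuinely different and more probabilistic route, and as far as I can tell it is correct. For the right tail the paper works on the Fourier side: it writes $\varphi_{X_L}(t)=1/\Psi_L(-it)$ for a canonical product $\Psi_L$ of order $\rho=1/(1-L)$, inverts to get the density, and shifts the contour past the first pole at $-ib_{0,L}=-i$, so that $\kappa_L e^{-x}$ appears as a residue and the remainder is $O(e^{-\beta x})$ with $\beta\in(1,1/L)$. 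Your decomposition $X_L=(|\zeta_0|^2-1)+Y_L$ achieves the same thing in real-variable terms: the residue at the first pole corresponds precisely to isolating the unique coefficient $a_{0,L}=1$, and your identity $\pr{X_L>x}=e^{-x-1}\ex{e^{Y_L}\ind_{\{Y_L\le x+1\}}}+\pr{Y_L>x+1}$ together with $\ex{e^{t_0Y_L}}<\infty$ for some $t_0\in(1,1/L)$ (valid since $\sup_{m\ge1}a_{m,L}=L<\tfrac12$ and $\sum a_{m,L}^2<\infty$) gives the same answer with less machinery; it even yields the density-level statement the paper notes parenthetically. For the left tail the paper computes $\log\ex{e^{-\lambda X_L}}\sim -\tfrac{\pi d_\rho}{\sin(\pi\rho)}\lambda^\rho$ from the standard asymptotics of canonical products (Bingham--Goldie--Teugels) and then invokes Kasahara's exponential Tauberian theorem; you instead reprove the Tauberian step by hand via a Chernoff bound at the saddle point plus an exponential tilt and a Lindeberg CLT under the tilted measure. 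Your leading constant for $\Lambda(t)$ agrees with the paper's (indeed $\big(\beta\Gamma(L)^{\beta}(-\sinc(\pi\beta))\big)^{-1}=-\pi d_\rho/\sin(\pi\rho)$), and the error budget $t_x\sqrt{\Lambda''(t_x)}=O(x^{1/(2L)})=o(x^{1/L})$ checks out. The trade-off: the paper's route outsources all the hard analysis to two citations and is short; yours is self-contained and makes the mechanism visible, at the cost of two technical chores you correctly flag --- (a) justifying the replacement of $a_{m,L}$ by $\Gamma(L)^{-1}m^{L-1}$ and the sum-to-integral comparison with error $o(t^\beta)$ (this works: the error is in fact $O(t)$), and (b) upgrading $\Lambda(t)\sim At^\beta$ to $\Lambda'(t)\sim A\beta t^{\beta-1}$, which needs either a direct computation of $\Lambda'$ (easiest here, since $\Lambda'(t)=\sum_m ta_{m,L}^2/(1+ta_{m,L})$ admits the same integral comparison) or a convexity argument. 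Neither is an obstruction, so I would classify your proposal as a correct alternative proof rather than a gap.
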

\begin{rmks}\phantom{w}
\begin{enumerate}
    \item We recall that the Gumbel CDF (with our normalisation) is $\exp(-e^{-x-\g_e})$ where $\g_e$ is Euler's constant. This means that $\P[X_0>x] \sim e^{-\g_e-x}$ as $x\to\infty$ and $\log\P[X_0<-x] = - e^{x-\g_e}$. The right tails of $X_L$ therefore have an exponential profile for all $L$, while the left tails are quite different. See Figure~\ref{fig: distributions} for an illustration of the PDF of $c_L X_L$ (that is, normalised to have mean $0$ and variance $1$).
    \item In Section~\ref{sec: GMC} we give a heuristic explanation for the appearance of the Gumbel distribution, using the theory of Gaussian multiplicative chaos.
\end{enumerate}

\end{rmks}

\begin{figure}
  \centering
  \includegraphics[width=0.8\columnwidth]{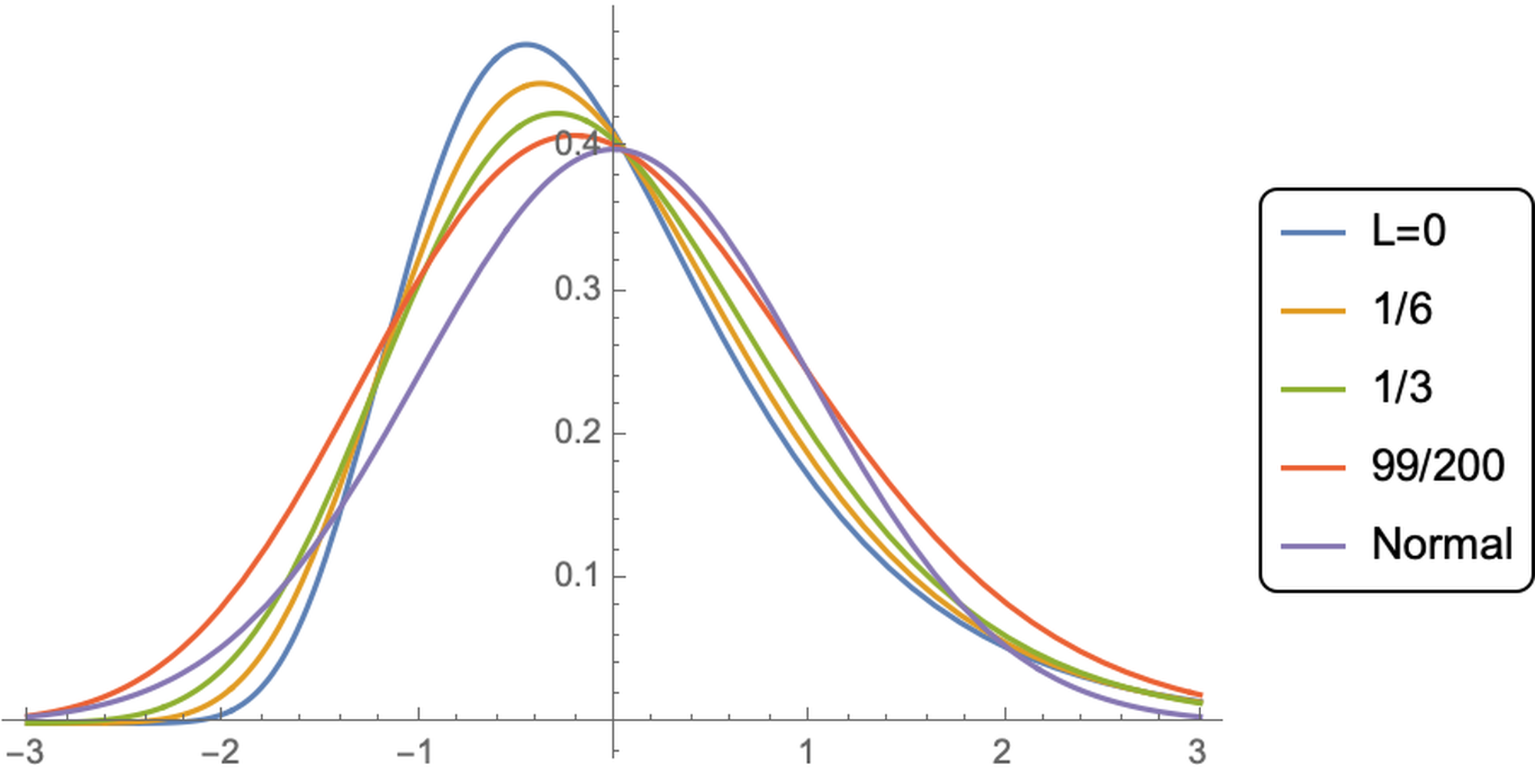}
  \caption{Distributions of the random variables $c_L X_L$ and the standard normal.}
  \label{fig: distributions}
\end{figure}

\subsection{Background and motivation}
The function $f_L$ was (to the best of our knowledge) first introduced by Leb\oe uf in \cite{Leb} where it is referred to as an ``analytic chaotic eigenstate''. It is viewed as a coherent state representation of a random quantum state. The unit disc is interpreted as the phase space of the corresponding quantum mechanical system, which is assumed to exhibit $\SU(1,1)$ symmetry. Furthermore, the fact that the coefficients $\z_m$ are complex valued reflects the absence of time-reversal symmetry. A different point of view, motivated by signal processing, is to consider $f_L$ as a Daubechies-Paul wavelet transform of white noise, see, e.g.,  \cite{BH}*{Theorem~2.3} and \cite{AHKR}*{Section~3}.

Leb\oe uf views the invariance of the zeroes of $f_L$ as a manifestation of ergodicity in phase space. The zero set is sometimes referred to as the `stellar representation' of the state (or `Majorana representation' see, e.g., \cite{Stellarbook}*{Chapter~7}) and is physically expected to determine the `Husimi function' $\frac{|f_L(z)|^2}{\ex{|f_L(z)|^2}}$, which gives the probability of finding a particle in a small neighbourhood of $z$. Interestingly \cite{PV}*{Theorem~6} gives an explicit formula to reconstruct $|f_L|$ from the random zeroes.

Another motivation for studying the zeroes of random holomorphic functions is to view the resulting point process as a system of interacting particles that exhibit local repulsion \cite{GAFbook}*{Chapter 1}. One fruitful approach is to compare and contrast the properties of different processes. It is particularly interesting to contrast $f_L$ with the `flat GAF' which is entire and invariant with respect to the Euclidean geometry \cite{GAFbook}*{Section~2.3}. Roughly speaking, the zeroes of $f_L$ for large $L$ behave like the flat zeroes, in contrast for small $L$ one expects to see `genuine hyperbolic phenomena'. Asymptotic normality of the zeroes of the flat GAF was described by Sodin-Tsirleson and Nazarov-Sodin \cites{NS,NS2,ST1}.

In the Euclidean setting there are many similarities between the zeroes of the `flat GAF' and the infinite Ginibre ensemble, see for example \cite{GN}. It is therefore also natural to compare the behaviour of the zeroes of $f_L$ with the determinantal process $\mathcal{X}_L$ with kernel
\begin{equation*}
    \cK_L(z,w)=\frac L\pi \frac1{(1-z\overline{w})^{L+1}}
\end{equation*}
and reference measure $\mathrm{d}\mu_L(z)=(1-|z|^2)^{\frac12(L-1)}\mathrm{d}m(z)$; here $m$ is the Lebesgue measure. Krishnapur \cite{KrishPhd}*{Theorem~3.0.5} showed that $\mathcal{X}_L$ are the only determinantal processes on the disc with analytic kernel that are invariant with respect to the automorphisms, and the intensity of the point process $\mathcal{X}_L$ and the zeroes of $f_L$ is the same. For $L=1$ the \emph{processes} are the same \cite{PV}. Moreover, it is also shown there that if $L\neq1$ then the zeroes of $f_L$ do not have a determinantal structure. Kartick Adhikari (private communication) has shown that there is no transition in the behaviour of the variance for the determinantal models and by \cite{GAFbook}*{Theorem~4.6.1} a CLT holds for all $L$. See also the recent work of Fenzl-Lambert \cite{FL}*{Section~2.3}.

\subsection{Related work}
Consider a real-valued stationary Gaussian sequence $(X_n)_{n\in\Z}$ with covariance kernel $r(n)=\ex{X_nX_0}$ which decays like $n^{-\a}$ for large $n$. Let $H$ be a function of Hermite rank $k$, that is, we can expand $H=\sum_{n\ge k}c_n H_n$ in terms of the Hermite polynomials in an appropriate sense. Consider the random variable
\begin{equation*}
    Y_N=\sum_{n=1}^N H(X_n).
\end{equation*}
By results of Breuer-Major and Dobrushin-Major \citelist{\cite{BM}*{Theorem~1}\cite{BM}*{Theorem~1'}\cite{DM}*{Theorem~1}}:
\begin{itemize}
    \item If $\a>\frac1k$ then the variance of $Y_N$ grows linearly with $N$ and a CLT holds. 
    \item If $\a=\frac1k$ then the variance of $Y_N$ grows at the rate $N\log N$, but a CLT still holds. 
    \item If $\a<\frac1k$ then the variance of $Y_N$ grows at the rate $N^{2-\a k}$, and a non-CLT holds. 
\end{itemize}
One may also consider the number of zeroes in the interval $[0,T]$ of a real-valued stationary Gaussian process $f:\R\to\R$, as $T\to\infty$. For sufficiently fast decay of the covariance, combining results of Cuzick and Slud, one gets a CLT for the zeroes \citelist{\cite{Cuz}*{Theorem~1}\cite{Slud91}*{Theorem~3}}; Slud found a non-Gaussian limit for a family with long range dependence \cite{Slud94}*{Theorem~3.2}. In our setting the random variable $n_L(r)$ has Hermite rank $2$, see Proposition~\ref{prop: Wiener expan}, and large values of $L$ correspond to fast decay of the covariance, see \eqref{eq: cov}.

A related problem in higher dimensions is the study of the nodal (i.e., zero) sets of random Laplace eigenfunctions; we refer the interested reader to the survey \cite{R} and the references therein. For example, Marinucci, Rossi and Wigman found that a CLT holds on the sphere  \cite{MRW}*{Corollary~1.3} while, in contrast, the same authors with Peccati showed that the fluctuations on the torus are non-Gaussian \cite{MPRW}*{Theorem~1.1}.

Curiously, when studying the pair correlations in the circular $\b$ ensemble, Aguirre, Soshnikov and Sumpter \cite{ASS}*{Theorem~2.1} discovered a non-Gaussian limit that is similar in form to the $X_L$ appearing in our Theorem~\ref{thm: main}.

\subsection{Links with Gaussian multiplicative chaos (GMC) when \texorpdfstring{$L=0$}{L=0}}\label{sec: GMC}

The random Fourier series
\begin{equation*}
    \sum_{m=0}^{\infty}\sqrt{a_{m,L}}\z_{m}e^{im\t}
\end{equation*}
does not converge to a function, but can be understood mathematically as a random distribution (i.e., generalised function). Such an object is sometimes referred to as a (complex) $1/f^{\a}$ noise on the unit circle; here $\a=1-L$ since $a_{m,L}\overset{m\to\infty}{\simeq}m^{L-1}$ \cite{Mand}. Heuristically we can think of this noise as representing the `boundary values' of $f_L$ on the unit circle, or conversely we can regard $f_L$ as the Poisson extension of the noise on the unit circle to the interior of the disc (see \cite{Johnson}*{Theorem 4} for a deterministic statement). The $\a=1$ (aliter $L=0$, so-called pink noise) case is particularly interesting and there is an extensive literature (in both mathematics and physics) on log-correlated processes. This theory, moreover, has links to random matrix theory and conjecturally with number theory \cite{P}*{Section~4}. We shall only touch on a small part of the theory here.

For $0\le L<\frac12$, a careful examination of the proof of Theorem~\ref{thm: main} shows that $\hat{n}_L(r)$ can be approximated in $L^2$ by the random variable
\[
-c_L r^{2} \sum_{m=0}^{\infty} a_{m,L} \left(\left|\zeta_{m}\right|^{2}-1\right) r^{2m}
\]
which converges to $-c_L X_L$ in $L^2$, as $r\to1$. On the other hand it is easy to compute
\begin{equation*}
    \int_{-\pi}^{\pi}\left(\big|f_{L}(re^{i\theta})\big|^{2}-\ex{\big|f_{L}(re^{i\t})\big|^{2}}\right) \frac{\dd\theta}{2\pi}=\sum_{m=0}^{\infty} a_{m,L} \left(\left|\zeta_{m}\right|^{2}-1\right) r^{2m}
\end{equation*}
and so we may think of $X_L$ as representing the integral\footnote{Fyodorov and Keating studied the integral defining $X_0$ in a different context \cite{FyK}*{Section~3~(d)}.}
\begin{equation*}
    \int_{-\pi}^{\pi}\left(\big|f_{L}(e^{i\theta})\big|^{2}-\ex{\big|f_{L}(e^{i\t})\big|^{2}}\right) \frac{\dd\theta}{2\pi},
\end{equation*}
bearing in mind that $f_L$ is properly a generalised function on the unit circle.

We now restrict to the case $L=0$. Write $u_0=\Re{f_0}$ and notice that
\[
 \frac{1}{2}\ex{u_{0}^{2}(z)}=\ex{\abs{f_{0}(z)}^{2}}.
\]
Consider, for $0<r<1$, the measures on the unit circle defined by 
\[
\dd\GMC_{r}^{\gamma}(\t)= \exp\left(\g  u_{0} (re^{i\t}) -\frac{\g^{2}}{2} \ex{u_{0}^{2}(re^{i\theta})} \right) \frac{\dd \t}{2\pi}.
\]
The weak limit (as $r\to1$) of this sequence of measures, denoted by $\GMC^{\gamma}$, is the Gaussian multiplicative chaos with coupling coefficient $0<\gamma<1$; it is a singular continuous random measure. For a comprehensive introduction to the theory we refer the reader to the survey \cite{RV}.

Curiously it turns out that one can derive Theorem~\ref{thm: tail}~\eqref{thm: tail Gumbel part} via the theory of GMC. We will only give a heuristic explanation. We are not aware of any way to extend this to $L>0$. 
\begin{prop}[The Fyodorov-Bouchaud formula (Remy, Chhaibi-Najnudel) \citelist{\cite{FyB} \cite{Remy}*{Theorem~1.1}\cite{CN}*{Corollary~2.5}}]
For $\gamma\in\left(0,1\right)$ the law of the total mass of the GMC is given
by
\[
\GMC^{\gamma}\left(\T\right)\overset{d}{=}K_{\gamma}\mathbf{e}^{-\gamma^{2}},
\]
where $K_{\gamma}=\Gamma\left(1-\gamma^{2}\right)^{-1}$ and $\mathbf{e}$
is a standard exponential random variable.
\end{prop}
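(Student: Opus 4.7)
The plan is to identify the law of $\GMC^\g(\T)$ by computing its Mellin transform $M(s) = \ex{\GMC^\g(\T)^s}$ and matching it against that of $K_\g \mathbf e^{-\g^2}$, whose Mellin transform $K_\g^{\,s}\Gamma(1-s\g^2)$ is immediate from the definition of the Gamma function. Since the moments of order $\ge \g^{-2}$ diverge, positive integer moments alone cannot determine the law, and one must extend $M$ to non-integer $s$ through a functional equation.

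The first step is to compute positive integer moments using Selberg-type integrals. Writing $z_k = re^{i\t_k}$, Fubini and the Gaussian product formula give
\begin{equation*}
\ex{\GMC_r^\g(\T)^n} = \int_{[-\pi,\pi]^n} \exp\!\Bigl(\g^2 \sum_{j<k} \ex{u_0(z_j)u_0(z_k)}\Bigr)\prod_{k=1}^n\frac{\dd\t_k}{2\pi}.
\end{equation*}
As $r\to 1$ the covariance $\ex{u_0(z_j)u_0(z_k)}$ extends to a logarithmic kernel on $\T$, and the resulting Coulomb integral is a classical Dyson--Morris--Selberg evaluation with an explicit product-of-Gammas value. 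Uniform integrability, supplied by Kahane's convexity inequality (comparing $u_0$ with an exactly scale-invariant log-correlated field on $\T$), justifies passing to the limit and produces $M(n)$ in agreement with $K_\g^{\,n}\Gamma(1-n\g^2)$ throughout the range of finiteness.

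The heart of the argument is the second step: extending $M$ to non-integer $s$. Following Remy, one introduces the enriched correlator $G(p) = \ex{\GMC^\g(\T)^s \, e^{\chi u_0(p)}}$ with a boundary insertion at $p\in\T$ and coupling $\chi$ tuned to a BPZ-degenerate value. A Gaussian integration-by-parts identity combined with the conformal properties of $u_0$ forces $G$ to satisfy a second-order ODE in $p$, whose explicit hypergeometric solution is known. Evaluating the two distinct representations of $G$, one from the ODE and the other from a Girsanov tilt that absorbs $e^{\chi u_0(p)}$ into a shifted moment of the GMC, yields a functional equation of the form $M(s+c) = R(s)\,M(s)$ for an explicit rational $R$ and a shift $c$ determined by $\chi$. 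Iterating this recursion from the integer values of Step~1 pins down $M(s) = K_\g^{\,s}\Gamma(1-s\g^2)$ on the entire strip $\re(s) < \g^{-2}$, and Mellin inversion identifies the law of $\GMC^\g(\T)$ as that of $K_\g \mathbf e^{-\g^2}$.

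The main obstacle is the derivation of the BPZ equation and the resulting shift identity. The degenerate-insertion machinery demands delicate control of the GMC near the insertion point $p$ as $r\to 1$ and careful renormalisation of the formal integration-by-parts identity; proving that $G(p)$ is sufficiently regular to satisfy an ODE at all is itself nontrivial. An alternative route, due to Chhaibi and Najnudel, bypasses conformal field theory by realising $\GMC^\g(\T)$ as a scaling limit of $\lvert\det(I-U_n)\rvert^{2\g}$-type functionals under the circular $\b$ ensemble and invoking Keating--Snaith-type explicit moment formulas; either route is substantially more involved than Step~1.
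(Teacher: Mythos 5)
There is a mismatch of scope here that you should be aware of: the paper does not prove this proposition at all. It is imported as a black box, with the proof attributed to Remy and to Chhaibi--Najnudel (the paper only uses it, heuristically, to explain why the Gumbel law appears as $\g\to0$; the actual proof of Theorem~\ref{thm: tail}~\eqref{thm: tail Gumbel part} in Section~\ref{sec: tail} goes through the characteristic function $\varphi_{X_0}(t)=e^{-i\g_e t}\Gamma(1-it)$ and never touches GMC). So there is no in-paper argument to compare against; what you have written is an outline of the proofs in the cited references.

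As such an outline, it is accurate. The integer-moment computation via the Gaussian product formula and the Dyson--Morris evaluation $\int_{\T^n}\prod_{j<k}|e^{i\t_j}-e^{i\t_k}|^{-2\g^2}\prod\frac{\dd\t_k}{2\pi}=\Gamma(1-n\g^2)\,\Gamma(1-\g^2)^{-n}$ is the correct first step and matches $K_\g^n\Gamma(1-n\g^2)$; the observation that only finitely many integer moments are finite, so that a functional equation in non-integer $s$ is unavoidable, is the right way to frame the difficulty; and the BPZ/shift-equation mechanism you describe is indeed Remy's route, while the C$\b$E scaling limit is Chhaibi--Najnudel's. But be clear that what you have is a proof \emph{plan}, not a proof: the entire content of the theorem lives in the step you defer --- establishing that the deformed correlator satisfies a hypergeometric ODE, extracting the shift relation $M(s+c)=R(s)M(s)$, and verifying enough analyticity of $M$ to make the iteration and the Mellin-inversion uniqueness argument legitimate. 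Naming these obstacles (as you do) is not the same as overcoming them, and none of the machinery needed is developed elsewhere in this paper. If your goal were a self-contained proof, the honest assessment is that Steps 1 and 3 are routine and Step 2 is the theorem; for the purposes of this paper, citing the result, exactly as the authors do, is the appropriate course.
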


We may think of $\GMC^{\gamma}$ as a sort of generating function
for $X_0$. Expanding $\GMC^{\g}$ in powers of $\g$ we find
\begin{align*}
\GMC^{\gamma}(\T) & =1+\gamma\int_{-\pi}^{\pi} u_{0}(e^{i\t}) \frac{\dd\theta}{2\pi} + \frac{\gamma^{2}}{2} \int_{-\pi}^{\pi} \left(u_{0}^2(e^{i\t}) - \ex{u_{0}^{2}(e^{i\t})}\right) \frac{\dd\theta}{2\pi} + \dots\\
& =1 + \g\cdot 0 +\gamma^{2}X_{0}+\dots
\end{align*}
and thus
\[
X_{0}\overset{d}{=}\lim_{\g\to0} \frac{K_{\g}\mathbf{e}^{-\g^{2}}-1} {\g^{2}}.
\]
To see how this leads to the Gumbel distribution we compute
\begin{equation*}
    \pr{\frac{ K_{\gamma} \mathbf{e}^{-\g^2} -1} {\g^2}\le t}=
    \pr{\mathbf{e}\ge\left( \frac{K_\g} {1+\g^2 t} \right)^{1/\g^2} } =\exp\left( -\left( \frac{K_{\gamma}} {1+\g^2 t} \right)^{1/\g^2} \right).
\end{equation*}
Taking the limit $\gamma\to0$ we find (here $\gamma_{e}$ is Euler's constant)
\[
\pr{X_{0} \le t}=\exp\left(-e^{-t-\gamma_{e}}\right)
\]
which is the Gumbel CDF.

\bigskip
The paper is organised as follows. In Section~\ref{sec: overview} we give an outline of the method. In Section~\ref{sec: L>1/2} we prove Theorem~\ref{thm: main} for $L>\frac12$. In Section~\ref{sec: var comput} we compute the asymptotic growth of the variance of $n_0(r)$. In Section~\ref{sec: L<1/2}  we complete the proof of Theorem~\ref{thm: main}. In Section~\ref{sec: tail} we prove Theorem~\ref{thm: tail}.

We conclude the introduction with a word on notation. We write $A\lesssim B$ if there exists a constant $C$, independent of the relevant variables, such that $A\le CB$. We write $A\simeq B$ if $A\lesssim B$ and $B\lesssim A$. We write $A=O(B)$ if $|A|\lesssim B$. We write $A\sim B$ if $A/B\to1$ when we take an appropriate limit.

\subsection*{Acknowledgements}
Yan Fyodorov explained the links between log-correlated processes and the Gumbel distribution to us. We had a number of useful discussions with Avner Kiro about the asymptotics of the variance appearing in Section~\ref{sec: var comput}, and with Igor Wigman about the subtleties of the fourth moment method. We thank the anonymous referees for their careful reading of the manuscript and numerous suggestions which substantially improved the readability.

\section{Outline of the method}\label{sec: overview}
Our investigations centre on the Wiener chaos expansion (sometimes called the Hermite-It\={o}
expansion) of the random variable $n_L(r)$. This expansion is well-known to experts, and appears implicitly in the papers \cites{B,NS,ST1}. In order to state it we first introduce some notation. 

Let $\mathrm{d}\mu(\z)=\tfrac{1}{\pi}e^{-|\z|^{2}}\mathrm{d}m(\z)$ denote the Gaussian measure on the plane (here $m$ is the planar Lebesgue measure) and write $\cP_{q}$ for the polynomials (in the variables $\z$ and $\bar{\z}$) of degree at most $q$ considered as subspace of $L^{2}(\mu)$. Denote by $\cH^{\wik{0}}=\cP_{0}$ and $\cH^{\wik{q}}=\cP_{q}\ominus\cP_{q-1}$ for $q\geq1$ (here $\ominus$ denotes orthogonal complement). Given a monomial $\z^{\a}\bar{\z}^{\b}$ with $\a+\b=q$ we write $\wikkk{\z}{\a}{\z}{\b}$ to denote its projection to $\cH^{\wik{q}}$, which is usually called a Wick product (a complex Hermite polynomial of degree $\a+\b$).

We now state the expansion, for completeness we include more details and a proof in Appendix~\ref{app: chaos}.
\begin{prop}\label{prop: Wiener expan}
Write $\widehat{f}_L\left(z\right)=\frac{f_L\left(z\right)}{K_L\left(z,\bar{z}\right)^{1/2}}$ and define
\begin{equation*}
    n_L(r;\a)=\frac{\left(-1\right)^{\a+1}}{\a\left(\a!\right)}\frac{1}{2\pi i}\integlim{\partial D\left(0,r\right)}{}{\frac{\partial}{\partial z}\wikk{\widehat{f}_L\left(z\right)}{\a}}{z}.
\end{equation*}
Then $n_L(r;\a)$ belongs to the $2\a$-th component of the Wiener chaos corresponding to $f_L$ and 
\begin{equation*}
    n_L(r)-\ex{n_L(r)}=\sum_{\a=1}^\infty n_L(r;\a)
\end{equation*}
where the sum converges in $L^2$.
\end{prop}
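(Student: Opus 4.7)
I would start from the argument principle representation
\[
n_L(r) = \frac{1}{2\pi i}\int_{\partial D(0,r)} \frac{\partial}{\partial z} \log \abs{f_L(z)}^2 \, \dd z,
\]
which holds almost surely (since a.s.\ no zero of $f_L$ lies on $\partial D(0,r)$). Decomposing $\log\abs{f_L(z)}^2 = \log K_L(z,\bar z) + \log\abs{\widehat f_L(z)}^2$, the first (deterministic) summand integrates to $\ex{n_L(r)}$ by the Edelman--Kostlan formula, and since $\ex{\log\abs{\widehat f_L(z)}^2} = -\g_e$ is a pointwise constant its $\partial/\partial z$ vanishes, leaving
\[
n_L(r) - \ex{n_L(r)} = \frac{1}{2\pi i}\int_{\partial D(0,r)} \frac{\partial}{\partial z}\bigl(\log\abs{\widehat f_L(z)}^2 + \g_e\bigr)\, \dd z.
\]

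Next I would establish the single-Gaussian Wiener chaos expansion
\[
\log\abs{\z}^2 + \g_e = \sum_{\a\ge 1} \frac{(-1)^{\a+1}}{\a \cdot \a!}\wikk{\z}{\a}\qquad\text{in } L^2(\mu)
\]
for a standard complex Gaussian $\z$. This follows from the identification $\wikk{\z}{\a} = (-1)^\a \a!\, L_\a(\abs{\z}^2)$ (with $L_\a$ the Laguerre polynomial), the orthonormality $\ex{L_\a(\abs{\z}^2)^2} = 1$, and the classical coefficient $\int_0^\infty \log x \, L_\a(x) e^{-x}\, \dd x = -1/\a$ for $\a\ge 1$. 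Since $\widehat f_L(z)$ is a standard complex Gaussian for every $z\in\D$, substituting yields the analogous expansion of $\log\abs{\widehat f_L(z)}^2 + \g_e$ in $L^2(\Omega)$, with $L^2$-error uniform in $z$.

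The main technical step will be interchanging the infinite sum with $\partial/\partial z$ and the contour integral. I would work with the partial sums
\[
S_N(z) := \sum_{\a=1}^N \frac{(-1)^{\a+1}}{\a \cdot \a!}\wikk{\widehat f_L(z)}{\a},
\]
which are polynomials in $f_L(z), \overline{f_L(z)}$ (once suitable powers of $K_L(z,\bar z)^{1/2}$ are absorbed) and therefore smooth in $z$, so for each finite $N$ termwise differentiation under the integral is routine. The uniform-in-$z$ convergence $S_N(z) \to \log\abs{\widehat f_L(z)}^2 + \g_e$ in $L^2(\Omega)$ then transfers to the contour integrals, yielding $n_L(r) - \ex{n_L(r)} = \sum_{\a\ge 1} n_L(r;\a)$. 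For each $z$ the quantity $\wikk{\widehat f_L(z)}{\a}$ belongs to $\cH^{\wik{2\a}}$ of the Gaussian Hilbert space generated by $\{f_L(w)\}_{w\in\D}$, a property preserved by $\partial/\partial z$ and by the Bochner-type contour integral, placing $n_L(r;\a)\in\cH^{\wik{2\a}}$. Orthogonality of distinct chaos components then forces $\sum_{\a\ge 1}\ex{n_L(r;\a)^2} = \var{n_L(r)} < \infty$, confirming $L^2$-convergence.

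The hard part will be making the interchange above fully rigorous: $\log\abs{f_L}^2$ is singular at the (random) zeros of $f_L$, so pointwise-in-$\omega$ manipulations are off-limits. My remedy is to stay in $L^2(\Omega)$ throughout, handling only the smooth polynomial partial sums $S_N$, and to exploit the distributional invariance of $\widehat f_L$ to obtain $z$-uniform control; the remaining checks (chaos-level membership, orthogonality, and summability) are then routine.
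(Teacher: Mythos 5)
Your setup (argument principle, Edelman--Kostlan, the Laguerre/Wick expansion of $\log\abs{\z}^2+\g_e$ with coefficient $-1/\a$) matches the paper's, and your argument that $n_L(r;\a)\in\cH^{\wik{2\a}}$ via closedness of the chaos under $\partial/\partial z$ and Riemann sums is essentially the paper's Appendix~A argument. But there is a genuine gap at the step you yourself flag as the hard part. You claim that the uniform-in-$z$ convergence $S_N(z)\to\log\abs{\widehat f_L(z)}^2+\g_e$ in $L^2(\Omega)$ ``transfers to the contour integrals.'' It does not: the integrand is $\partial_z S_N$, not $S_N$, and $L^2(\Omega)$-convergence of random fields (even uniform in $z$) gives no control whatsoever on their $z$-derivatives. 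Your stated remedy --- stay in $L^2(\Omega)$ and use distributional invariance --- does not engage with this; the operator $\partial_z$ simply is not continuous for the topology in which your convergence holds. What is actually needed, and what the paper supplies in Appendix~A, is a direct estimate of $\E\bigl[\bigl(n(r)-\E n(r)-\sum_{\a\le M}n(r;\a)\bigr)^2\bigr]$: one writes it as a double contour integral of $\partial_z\partial_w$ applied to the covariance of the log-remainders, justifies exchanging $\E$ with the derivatives and integrals via $p$-th moment bounds with $1<p<2$ on $\nabla\log\abs{\widehat f}^2$ (Lemmas~\ref{lem: pth mom grad}--\ref{lem: hypoth sat polys}, which are exactly where the logarithmic singularity at the zeros is tamed), and then identifies the result, via Janson's Theorem~3.9, as the tail $\mathcal{E}_M$ of a convergent series. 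Some quantitative argument of this kind is unavoidable; the qualitative convergence you invoke cannot produce it.

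A secondary point: your closing sentence, that orthogonality ``forces $\sum_{\a}\ex{n_L(r;\a)^2}=\var{n_L(r)}<\infty$, confirming $L^2$-convergence,'' is circular. That identity is a consequence of the expansion $n_L(r)-\ex{n_L(r)}=\sum_\a n_L(r;\a)$, i.e.\ of $n_L(r;\a)$ being the orthogonal projection of $n_L(r)$ onto $\cH^{\wik{2\a}}$ --- which is precisely what is being proved. Mere mutual orthogonality of the $n_L(r;\a)$ gives no bound on $\sum_\a\ex{n_L(r;\a)^2}$. (An honest alternative route along these lines would be to verify $\ip{n_L(r)-\ex{n_L(r)}}{Y}=\ip{n_L(r;\a)}{Y}$ for all $Y\in\cH^{\wik{2\a}}$ and then invoke completeness of the chaos decomposition, but that verification again requires exchanging expectation with $\partial_z$ and the contour integral, so it does not avoid the analytic work.)
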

Let us indicate a heuristic explanation of the expansion. A computation (see \cite{Jan}*{Example 3.32}) shows that the set of all Wick products $\wikkk{\z}{\a}{\z}{\b}$ with $\a+\b=q$ is an orthogonal basis for $\cH^{\wik{q}}$, and moreover $\norm{\wikkk{\z}{\a}{\z}{\b}}^{2}=\a!\b!$
(the norm here is the norm inherited from $L^{2}(\mu)$). Furthermore
\cite{Jan}*{Theorem 2.6}
\[
L^{2}(\mu)=\bigoplus_{q=0}^{\infty}\cH^{\wik{q}}.
\]
We expand the logarithm with respect to this orthonormal basis and a calculation \cite{NS}*{Lemma 2.1} yields
\begin{equation}\label{eq: log expan}
    \log\left|\z\right|^{2}=-\g_e+\sum_{\a=1}^{\infty}\frac{(-1)^{\a+1}}{\a(\a!)}\wikk{\z}{\a}
\end{equation}
where the equality holds in $L^{2}\left(\mu\right)$.

From the argument principle and direct computation we have
\[
n_L\left(r\right)=\frac{1}{2\pi i}\integlim{\partial D\left(0,r\right)}{}{\frac{f'_L\left(z\right)}{f_L\left(z\right)}}{z}=\frac{1}{2\pi i}\integlim{\partial D\left(0,r\right)}{}{\frac{\partial}{\partial z}\log\left|f_L(z)\right|^{2}}{z}
\]
and the Edelman-Kostlan formula \cite{GAFbook}*{Section~2.4} gives
\begin{equation*}
n_L\left(r\right)-\E\left[n_L\left(r\right)\right]=\frac{1}{2\pi i}\integlim{\partial D\left(0,r\right)}{}{\frac{\partial}{\partial z}\log \Abs{\widehat{f}_L(z)}^{2}}{z}.
\end{equation*}
Inserting \eqref{eq: log expan} into this expression and exchanging
the sum with the derivative and the integral formally yields the expansion given in the proposition. Furthermore, the orthogonality of the Wick products yields the orthogonality of $n_L\left(r;\a\right)$ for different values of $\a$.

Let us now outline how we use the expansion to prove the main theorem. The orthogonality of the expansion allows us to compute
\begin{equation*}
    \var{n_L(r)}=\sum_{\a=1}^\infty\ex{n_L(r;\a)^2}
\end{equation*}
and we will show that if $0\le L\le\frac12$ then
\begin{equation*}
    \var{n_L(r)}\overset{r\to1}{\sim}\ex{n_L(r;1)^2},
\end{equation*}
that is, only the first (non-trivial) component of the chaos contributes. In contrast, if $L>\frac12$ then all of the terms of the sum are of comparable size. In this latter case we show that each of the terms $n_L(r;\a)$ is asymptotically normal, through the method of moments. This idea goes back to \cite{ST1}, although the scheme developed there and modified in \cite{BS} only works for $L>1$, essentially due to the slower decay of the covariance kernel in the hyperbolic setting. Instead we use the Fourth Moment Theorem \cite{fourth mom}, a powerful method for proving a CLT for random variables that belong to a fixed component of the Wiener chaos.

If $0\le L \le \frac12$ then, by the variance estimates just mentioned, we have $n_L(r)=n_L(r;1)+o_\P(1)$. We analyse the expression for $n_L(r;1)$ in detail: for $L<\frac12$ we show that it is asymptotic to $X_L$ (when normalised properly), in contrast for $L=\frac12$ we show that it is asymptotically normal (the transition is essentially down to the  summability of $a_{m,L}^2$).    

\section{Proof of the CLT for \texorpdfstring{$L>\frac{1}{2}$}{L>1/2}}\label{sec: L>1/2}

In this section we will prove Theorem~\ref{thm: main}~\eqref{part: L ge 1/2} in the case that $L>\frac12$. The method is fairly standard, relying on the fourth moment theorem, and we accordingly do not give all of the details. The idea is to replace $n_{L}\left(r\right)$ by the random variable
\[
n_{L}^{\left(M\right)}(r)=\sum_{\a=1}^{M} n_{L}(r;\a)
\]
and to prove that:
\begin{Rlist}
    \item \label{CLT: L2 bound} There exists $r_{0}<1$ such that
    \begin{equation*}
        \ex{\left(n_{L}(r)-\ex{n_{L}(r)}-n_{L}^{\left(M\right)}(r)\right)^{2}}\le \frac{C_{L}}{\sqrt{M}}\var{n_{L}\left(r\right)}
    \end{equation*}
    for all $r_{0}\le r<1$ and $M\ge1$.
    \item \label{CLT: AN for finite m} For each fixed $M$
    \begin{equation*}
        \frac{n_{L}^{\left(M\right)}(r)} {\sqrt{\var{n_{L}^{\left(M\right)}(r)}}} \to \cN_{\R}\left(0,1\right)
    \end{equation*}
    in distribution, as $r\to1$.
\end{Rlist}
The result then follows; for completeness we prove this in Appendix~\ref{app: CLT}.
\subsection{Some preliminary calculations}
In order to implement the strategy outlined above, we will need the following lemma, which uses the notation $\widehat{K}_{L}(z,w)=\frac{K_{L}(z,w)}{\sqrt{K_{L}(z,z) K_{L}(w,w)}}$.
\begin{lem}\label{lem: 2nd/4th moment chaos}
  If $\a\ge1$ then
  \begin{align*}
      \ex{\big(n_{L}(r;\a)\big)^{2}} & = \left(\frac{1}{2\pi i}\right)^{2} \frac{1}{\a^{2}} \iint\limits_{\partial D\left(0,r\right)^{2}} \frac{\partial^{2}} {\partial z\partial w} \left|\widehat{K}_{L}\left(z,w\right)\right|^{2\alpha} \mathrm{d}z \mathrm{d}w\\
      &= \frac{L^{2}r^{4}} {2\pi\left(1-r^{2}\right)^{2}} \integlim{-\pi}{\pi} {\Abs{ \frac{1-r^{2}} {1-r^{2}e^{i\theta}} }^{2\a L} \Abs{\frac{1-e^{i\theta}} {1-r^{2}e^{i\theta}}}^{2} } {\theta}
  \end{align*}
  and
  \[
  \ex{\big(n_{L}(r;\alpha)\big)^{4}} = \left(\frac{1}{2\pi i}\right)^{4} \frac{1}{\alpha^{4}\left(\alpha!\right)^{4}} \int_{\partial D\left(0,r\right)^{4}} \frac{\partial^{4}} {\partial z_{1}\dots\partial z_{4}} \ex{\prod_{j=1}^{4}\wikk{\widehat{f}\left(z_{j}\right)} {\alpha} }\prod_{j=1}^{4}\mathrm{d}z_{j}.
  \]
\end{lem}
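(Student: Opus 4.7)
The plan is to treat the two identities in turn via Fubini and the isometry of the Wiener chaos. For the second moment the first step is to use Fubini, plus the fact that $\wikk{\widehat f_L(z)}{\a}$ is a polynomial in finitely many of the underlying Gaussians that varies smoothly with $z$, to move the expectation inside the double contour integral and past the $\partial_z,\partial_w$ operators. The isometry of the Wiener chaos then gives
\[
\ex{\wikk{g_1}{\a} \wikk{g_2}{\a}} = (\a!)^{2}\bigl|\ex{g_1\overline{g_2}}\bigr|^{2\a}
\]
for any jointly centred standard complex Gaussians $g_1,g_2$; this is valid because $\wikk{g}{\a}$ is the real random variable obtained by projecting $|g|^{2\a}$ onto the $2\a$-th chaos, equivalently a diagonal complex Hermite polynomial $H_{\a,\a}$. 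Applied to $g_j=\widehat f_L(z_j)$, the right-hand side becomes $(\a!)^{2}|\widehat K_L(z,w)|^{2\a}$, and the prefactor $1/(\a(\a!))^{2}$ cancels the $(\a!)^{2}$ to produce the first displayed identity.

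For the explicit scalar integral I would parametrise $z=re^{i\phi}$, $w=re^{i\psi}$ and use the closed form
\[
|\widehat K_L(z,w)|^{2\a}=\left(\frac{(1-|z|^{2})(1-|w|^{2})}{|1-z\bar w|^{2}}\right)^{\a L}.
\]
Logarithmic differentiation is the clean route: writing $\Phi$ for the logarithm of this expression, $\Phi$ splits into terms of the form $\log(1-z\bar w)$, $\log(1-\bar z w)$, $\log(1-z\bar z)$, $\log(1-w\bar w)$, and one verifies directly that the mixed second derivative $\partial^{2}\Phi/\partial z\partial w$ vanishes. Hence $\partial_z\partial_w e^{\Phi}=e^{\Phi}\,\partial_z\Phi\,\partial_w\Phi$, and a one-line algebraic simplification on $|z|=|w|=r$ produces $\partial_z\Phi=\a L(\bar w-\bar z)/\bigl((1-z\bar w)(1-r^{2})\bigr)$ together with the conjugate partner for $\partial_w\Phi$. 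Substituting, and invoking the identities $e^{i(\phi+\psi)}(\bar w-\bar z)^{2}=-r^{2}|1-e^{i\theta}|^{2}$ (with $\theta=\phi-\psi$) and $(1-z\bar w)(1-\bar z w)=|1-r^{2}e^{i\theta}|^{2}$, leaves an integrand depending only on $\theta$; integrating out the complementary angular variable supplies a factor $2\pi$ and yields the claimed single-integral formula.

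The fourth-moment identity is purely formal: the same Fubini and derivative-exchange argument applied to the product of four Wick polynomials gives the displayed expression. No attempt is made to evaluate $\ex{\prod_{j}\wikk{\widehat f_L(z_j)}{\a}}$ in closed form at this stage, since Wick's theorem would unfold it into an unwieldy sum over pairings of the $4\a$ factors; the bounds actually needed for the fourth-moment method are only extracted later when establishing \ref{CLT: AN for finite m}.

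The main obstacle I anticipate is the sign bookkeeping in the explicit scalar computation: $(2\pi i)^{2}=-4\pi^{2}$, the parametrisation Jacobian $dz=ire^{i\phi}\,d\phi$, the sign in $(\bar w-\bar z)(\bar z-\bar w)=-(\bar w-\bar z)^{2}$, and the identity $e^{i\theta}(1-e^{-i\theta})^{2}=-|1-e^{i\theta}|^{2}$ each contribute a minus sign, and all of these must combine correctly so that the final integrand is manifestly positive.
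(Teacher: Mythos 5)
Your proposal is correct and follows essentially the same route as the paper: interchange the expectation with the contour integrals and Wirtinger derivatives, apply the chaos isometry $\ex{\wikk{g_1}{\a}\,\wikk{g_2}{\a}}=(\a!)^2\bigl|\ex{g_1\overline{g_2}}\bigr|^{2\a}$ (the paper cites \cite{Jan}*{Theorem 3.9}), and then evaluate the resulting scalar integral --- your logarithmic-derivative computation, including the vanishing of $\partial_z\partial_w\Phi$ and the sign bookkeeping, checks out and reproduces the simplification the paper performs following \cite{KN}*{Claim A.2}. The one inaccuracy is your stated justification of the interchange: $\wikk{\widehat{f}_L(z)}{\a}$ is a polynomial in the single complex Gaussian $\widehat{f}_L(z)$, which depends on \emph{all} of the $\z_m$ rather than finitely many, and the paper instead legitimises the exchange via uniform $L^p$ bounds on $\nabla\wikk{\widehat{f}}{\a}$ together with a dominated-convergence lemma (Lemmas~\ref{lem: pth mom grad}--\ref{lem: hypoth sat polys} in Appendix~\ref{app: chaos}).
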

We postpone the proof of the lemma to Appendix~\ref{app: chaos}, since it simply involves exchanging expectation with integrals and derivatives. We will also need the following estimate.
\begin{lem}\label{lem: bound integral 2nd moment}
  If $L>\frac12$ is fixed then
  \begin{equation*}
      c_{L,\a} (1-r)\le \integlim{-\pi}{\pi} {\Abs{ \frac{1-r^{2}} {1-r^{2}e^{i\theta}} }^{2\a L} \Abs{\frac{1-e^{i\theta}} {1-r^{2}e^{i\theta}}}^{2} } {\theta} \le C_L\alpha^{-3/2}(1-r) + \big(C (1-r)\big)^{2\a L}
  \end{equation*}
  for $r\ge r_0$ and $\a\ge1$.
\end{lem}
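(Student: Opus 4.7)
Set $\rho = 1-r^{2}$, so $\rho \simeq 1-r$ as $r\to 1$. Expanding the squared moduli gives $|1-r^{2}e^{i\t}|^{2}=\rho^{2}+4r^{2}\sin^{2}(\t/2)$ and $|1-e^{i\t}|^{2}=4\sin^{2}(\t/2)$, so the integrand equals
\[
  \rho^{2\a L}\,\frac{4\sin^{2}(\t/2)}{\bigl(\rho^{2}+4r^{2}\sin^{2}(\t/2)\bigr)^{\a L+1}}.
\]
I would split $[-\pi,\pi]$ into the \emph{local} region $|\t|\le\pi/2$ and the \emph{bulk} region $|\t|>\pi/2$, treating them separately for both bounds.

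\textbf{Upper bound, bulk.} For $|\t|>\pi/2$ we have $\cos\t\le 0$, hence $|1-r^{2}e^{i\t}|^{2}=1-2r^{2}\cos\t+r^{4}\ge 1$. The integrand is therefore $\le 4\rho^{2\a L}$, and the bulk contribution is $\le 4\pi\rho^{2\a L}\le \bigl(C(1-r)\bigr)^{2\a L}$ for $C$ large enough (depending on $L$), since $\a\ge 1$ lets us absorb the prefactor $4\pi$ into an exponential in $\a L$.

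\textbf{Upper bound, local.} For $|\t|\le\pi/2$, I would make the change of variable $u=2r\sin(\t/2)/\rho$, under which $\rho^{2}+4r^{2}\sin^{2}(\t/2)=\rho^{2}(1+u^{2})$ and $d\t=(\rho/r)\,du/\cos(\t/2)$, a ratio bounded between two constants on $|\t|\le\pi/2$. The integrand times $d\t$ is therefore comparable to
\[
  \rho\cdot \frac{u^{2}}{(1+u^{2})^{\a L+1}}\,du.
\]
Extending the domain of integration to all of $\R$ and evaluating the resulting Beta integral gives
\[
  \int_{-\infty}^{\infty}\frac{u^{2}}{(1+u^{2})^{\a L+1}}\,du=\frac{\sqrt{\pi}\,\Gamma(\a L-1/2)}{2\,\Gamma(\a L+1)}\le C_{L}(\a L)^{-3/2},
\]
where the last bound uses Stirling's asymptotic $\Gamma(s-3/2)/\Gamma(s)\sim s^{-3/2}$ and the fact that, for $L>1/2$ fixed and $\a\ge 1$, the quantity $\a L$ is bounded below by $L>1/2$ so the Beta integral is finite with a constant that depends only on $L$. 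Combining with the rescaling yields a local contribution $\le C_{L}\a^{-3/2}(1-r)$.

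\textbf{Lower bound.} I would restrict the integral to $|\t|\le\rho$. There $|1-r^{2}e^{i\t}|^{2}\le\rho^{2}+\t^{2}\le 2\rho^{2}$ and $|1-e^{i\t}|^{2}\simeq\t^{2}$, so the integrand is bounded below by a constant (depending on $\a$ and $L$) times $\t^{2}/\rho^{2}$. Integration gives
\[
  \int_{-\rho}^{\rho}\frac{\t^{2}}{\rho^{2}}\,d\t=\frac{2}{3}\rho,
\]
which yields the stated lower bound $c_{L,\a}(1-r)$.

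\textbf{Main obstacle.} The subtle point is extracting the correct exponent $\a^{-3/2}$ from the local integral: a naive splitting at $|\t|\simeq\rho$ would only give $\a^{-1}$, because one must recognise that the integrand $u^{2}/(1+u^{2})^{\a L+1}$ is peaked near $u\simeq 1/\sqrt{\a L}$ rather than $u\simeq 1$, and that the numerator $\sin^{2}(\t/2)\simeq\t^{2}$ vanishing at the origin provides one additional factor of $\a^{-1/2}$ beyond what one would naively expect. Cleanly evaluating the Beta integral (equivalently, invoking $\Gamma(s-3/2)/\Gamma(s)\sim s^{-3/2}$) is the way to capture this; the hypothesis $L>\tfrac12$ is exactly what makes that Beta integral converge for every $\a\ge 1$, reflecting the very transition at $L=\tfrac12$ that drives Theorem~\ref{thm: main}.
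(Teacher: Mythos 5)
Your argument is correct and follows essentially the same route as the paper: the same rewriting of the integrand via $|1-r^2e^{i\theta}|^2=(1-r^2)^2+2r^2(1-\cos\theta)$, the same split into a neighbourhood of $\theta=0$ and the rest, the same rescaling of the local piece to a Beta integral whose Stirling asymptotics yield the $\alpha^{-3/2}$ factor (with $L>\tfrac12$ guaranteeing convergence), the same $(C(1-r))^{2\alpha L}$ bound on the bulk, and the same restriction to $|\theta|\lesssim 1-r$ for the lower bound. The only cosmetic differences are the location of the cut ($\pi/2$ versus a small fixed $\varepsilon$) and the precise change of variables.
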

\begin{proof}
An easy computation yields $\left|1-r^{2}e^{i\t}\right|^{2}=\left(1-r^{2}\right)^{2}+2r^{2}\left(1-\cos\theta\right)$
and so we get
\begin{equation*}
\integlim{-\pi}{\pi} {\left| \frac{1-r^{2}} {1-r^{2}e^{i\theta}} \right|^{2\alpha L} \left|\frac{1-e^{i\theta}} {1-r^{2}e^{i\theta}} \right|^{2}} {\theta}
=4\integlim{0}{\pi} {\left(1+ \frac{2r^{2}} {\left(1-r^{2}\right)^{2}} \left(1-\cos\theta\right)\right)^{-(\alpha L+1)} \frac{1-\cos\theta} {(1-r^{2})^{2}}}{\theta}.
\end{equation*}

We separate the `small' and `big' values of $\theta$. The small values contribute (below $B$ denotes the beta function and $\e>0$ is small but fixed)
\begin{align}\label{eq: small est alph var}
    \integlim {0}{\varepsilon} {\Big(1+ \frac{2r^{2}} {\left(1-r^{2}\right)^{2}} &\left(1-\cos\theta\right)\Big)^{-(\alpha L+1)} \frac{1-\cos\theta} {(1-r^{2})^{2}}} {\theta} \lesssim \integlim{0}{\varepsilon} {\left(1+\frac{c\t^{2}}{(1-r^{2})^{2}}\right)^{-(\alpha L+1)} \frac{\theta^{2}}{\left(1-r^{2}\right)^{2}}}{\theta}\notag\\
    \overset{x=\frac{\sqrt{c}\theta}{1-r^{2}}} &{\lesssim} (1-r^{2}) \integlim{0}{\infty} {\left(1+x^{2}\right)^{-(\alpha L+1)} x^{2}}{x}
    \overset{y=\frac{1}{1+x^{2}}}{\simeq}(1-r) \integlim{0}{1} {y^{\alpha L-\frac{3}{2}} \sqrt{1-y}} {y}\notag\\
    &= (1-r) B\Big(\alpha L-\frac{1}{2},\frac{3}{2}\Big)
    \simeq (1-r) \frac{\G(\a L- \frac{1}{2} )} {\G (\alpha L+1)}    \le C_L\alpha^{-3/2}(1-r).
\end{align}
The remaining contribution is
\begin{equation*}\label{eq: large est alpha var}
\integlim {\varepsilon} {\pi} {\left(1+ \frac{2r^{2}} {(1-r^{2})^2} (1-\cos\t)\right)^{-\alpha L} \frac{2\left(1-\cos\theta\right)}{\left(1-r^{2}\right)^{2}+2r^{2}\left(1-\cos\theta\right)}}{\theta}
\le \big(C (1-r)\big)^{2\a L} \integlim{\varepsilon}{\pi}{}{\theta}.
\end{equation*}

In the other direction, arguing similarly to \eqref{eq: small est alph var}, we get
\begin{equation*}
    \integlim {0}{\varepsilon} {\Big(1+ \frac{2r^{2}} {\left(1-r^{2}\right)^{2}} \left(1-\cos\theta\right)\Big)^{-(\alpha L+1)} \frac{1-\cos\theta} {(1-r^{2})^{2}}} {\theta} \gtrsim (1-r) \integlim{\frac12}{1} {y^{\alpha L-\frac{3}{2}} \sqrt{1-y}} {y}
\end{equation*}
where the lower limit in the last integral arises from choosing $r_0$ appropriately.
\end{proof}
\subsection{Proof of \texorpdfstring{\eqref{CLT: L2 bound}}{(I)}}
Using Proposition~\ref{prop: Wiener expan} and Lemma~\ref{lem: 2nd/4th moment chaos} we get
\begin{align*}
    \ex{\left(n_{L}\left(r\right)-\ex{n_{L}\left(r\right)} -n_{L}^{\left(M\right)}(r)\right)^{2}} & = \sum_{\a>M} \ex{\left(n_{L}\left(r;\a\right)\right)^2}\\
    &=\frac{L^{2}r^{4}}{2\pi\left(1-r^{2}\right)^{2}} \sum_{\a>M} \integlim{-\pi}{\pi} {\Abs{ \frac{1-r^{2}} {1-r^{2}e^{i\theta}} }^{2\a L} \Abs{\frac{1-e^{i\theta}} {1-r^{2}e^{i\theta}}}^{2} } {\theta}.
\end{align*}
Using the estimates in Lemma~\ref{lem: bound integral 2nd moment} we conclude that
\begin{align*}
\ex{\left(n_{L}(r) - \ex{n_{L}(r)}-n_{L}^{(M)}(r)\right)^{2}} &\le \frac{C_L}{(1-r)^{2}} \sum_{\alpha>M} \left[\alpha^{-3/2}(1-r) + \big(C (1-r)\big)^{2\a L}\right]\\
&\le C_L\left[\frac{1}{\sqrt{M}(1-r)} + \big(C (1-r)\big)^{2(M-1)L} \right]\\
&\le \frac{C_L}{\sqrt{M} (1-r)}\le \frac{C_L}{\sqrt{M}} \var{n_L(r)}
\end{align*}
uniformly in $M$, where the last bound follows from \eqref{eq: var est}. This completes
the proof of \eqref{CLT: L2 bound}.

\subsection{Proof of \texorpdfstring{\eqref{CLT: AN for finite m}}{(II)}}

We wish to show that the sum
\[
n_{L}^{\left(M\right)}(r)=\sum_{\alpha=1}^{M}n_{L}\left(r;\alpha\right)
\]
is asymptotically normal, and so it suffices to see that the vector
\[
\begin{pmatrix}n_{L}\left(r;1\right)\\
\vdots\\
n_{L}\left(r;M\right)
\end{pmatrix}
\]
satisfies a multi-variate CLT. By the multi-dimensional fourth moment
theorem \cite{fourth mom}*{Theorem 1} it is enough to check
that
\[
\frac{\ex{\big(n_{L}(r;\alpha)\big)^{4}}} {\ex{\big(n_{L}(r;\alpha)\big)^{2}}^{2}}\to3
\]
as $r\to1$, for each fixed $\a$. We recall that, by Lemma~\ref{lem: 2nd/4th moment chaos}, we have
\[
\ex{\big(n_{L}(r;\alpha)\big)^{4}} = \left(\frac{1}{2\pi i}\right)^{4} \frac{1}{\alpha^{4}\left(\alpha!\right)^{4}} \int_{\partial D\left(0,r\right)^{4}} \frac{\partial^{4}} {\partial z_{1}\dots\partial z_{4}} \ex{\prod_{j=1}^{4}\wikk{\widehat{f}\left(z_{j}\right)} {\alpha} }\prod_{j=1}^{4}\mathrm{d}z_{j}.
\]

Let $\mathcal{D}=\mathcal{D}(\a)$ denote the set of (bipartite) graphs with $8\alpha$ vertices such that:
\begin{itemize}
\item For each $1\le j\le4$ there are $\alpha$ vertices labelled $j$
and $\alpha$ vertices labelled $\overline{j}.$
\item Each vertex has degree exactly $1$, i.e., every vertex is paired with exactly one other vertex.
\item Each edge joins a vertex labelled $j$ to a vertex labelled $\overline{k}$
for $j\neq k$.
\end{itemize}
Now if the edge $e$ joins a vertex labelled $j$ to a vertex labelled
$\overline{k}$ then we write $\widehat{K}_L(e) = \widehat{K}_L(z_{j,}z_{k})$
and we define the value of a graph $\g\in\cD$ to be
\[
v\left(\gamma\right)=\prod_{e}\widehat{K}_L(e).
\]
By \cite{Jan}*{Theorem 3.12}
\[
\ex{\prod_{j=1}^{4}\wikk{\widehat{f}\left(z_{j}\right)}{\alpha} } =\sum_{\gamma\in\mathcal{D}}v\left(\gamma\right)
\]
and so
\[
\ex{\big(n_{L}(r;2\alpha)\big)^{4}} = \left(\frac{1}{2\pi i}\right)^{4} \frac{1}{\a^{4}(\a!)^{4}} \sum_{\gamma\in\mathcal{D}} \int_{\partial D\left(0,r\right)^{4}} \frac{\partial^{4}}{\partial z_{1}\dots\partial z_{4}} v(\g) \prod_{j=1}^{4}\mathrm{d}z_{j}.
\]
\begin{figure}
  \centering
  \includegraphics[trim={0 18.3cm 1.5cm 1.6cm},clip,width=0.6\columnwidth]
  {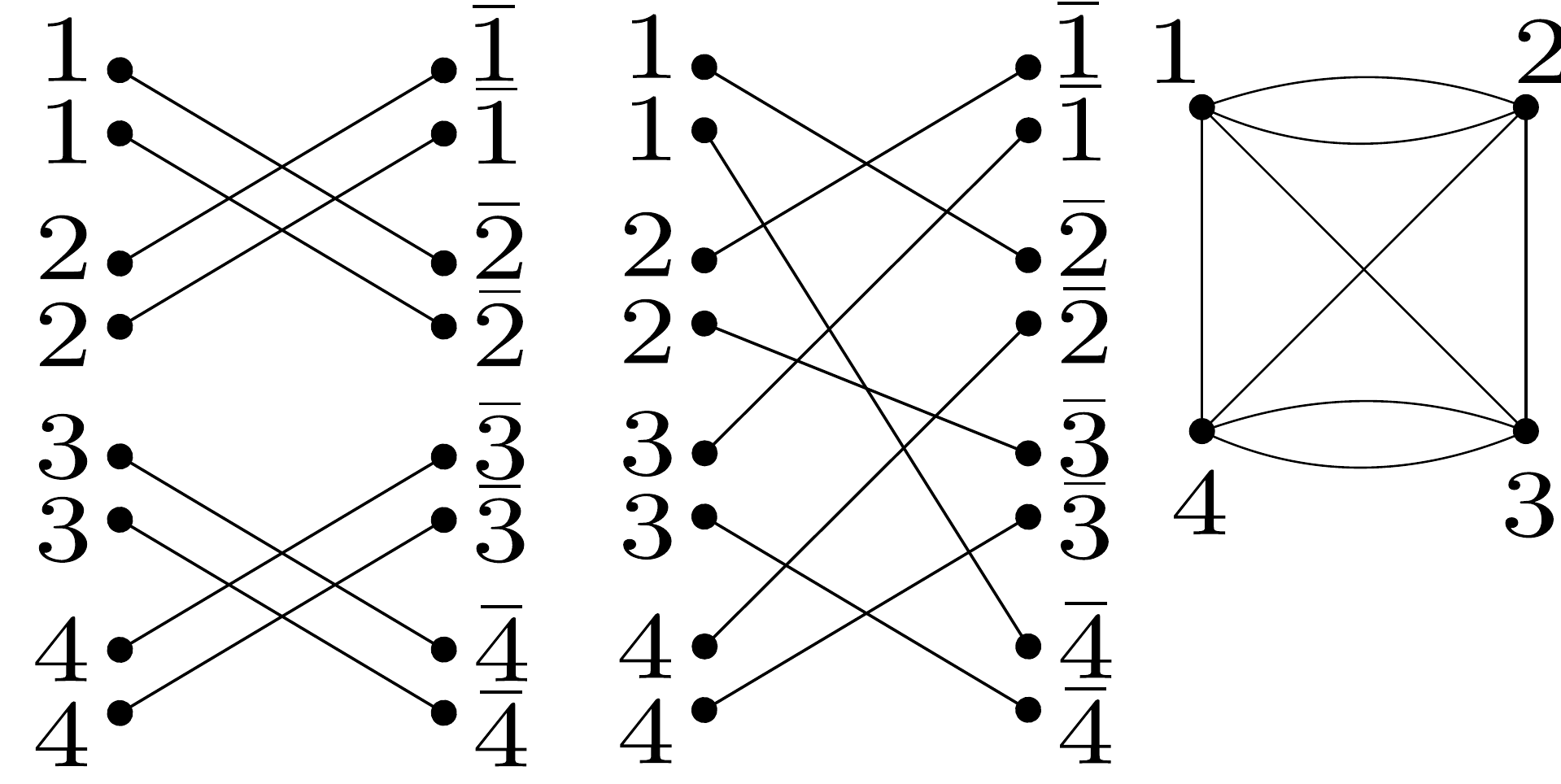}
  \caption{Illustration for $\a=2$ of a regular diagram (left), an irregular diagram $\g$ (centre) and the corresponding reduced diagram $\g^*$ (right).}
  \label{fig: reg and irreg}
\end{figure}
We say that a diagram is regular if the set $\left\{ 1,2,3,4\right\} $
can be partitioned into pairs $\left\{ j,k\right\} $ such that each
edge of the diagram joins a vertex labelled $j$ to $\overline{k}$
or $\overline{j}$ to $k$, otherwise the diagram is said to be irregular; see Figure~\ref{fig: reg and irreg}. Exactly as in \cite{BS}*{Pages 324--5} we have
\begin{equation*}
\left(\frac{1}{2\pi i}\right)^{4} \frac{1}{\alpha^{4}\left(\alpha!\right)^{4}} \sum_{\g\text{ regular}} \int_{\partial D\left(0,r\right)^{4}} \frac{\partial^{4}} {\partial z_{1}\dots\partial z_{4}} v(\g) \prod_{j=1}^{4}\mathrm{d}z_{j} = 3 \, \ex{\big(n_{L}(r;\alpha)\big)^{2}}^{2}
\end{equation*}
and so it is enough to show, for fixed $L\ge \frac{1}{2}$, $\alpha$ and
irregular diagram $\gamma$ that
\begin{equation*}
\int\limits_{\partial D(0,r)^{4}} \frac{\partial^{4}} {\partial z_{1}\dots\partial z_{4}} v(\g) \prod_{j=1}^{4}\mathrm{d}z_{j}  =o\left(\ex{\big(n_{L}(r;\alpha)\big)^{2}}^{2}\right).
\end{equation*}
Combining Lemmas~\ref{lem: 2nd/4th moment chaos} and \ref{lem: bound integral 2nd moment} gives
\begin{equation*}
    \ex{\big(n_{L}(r;\alpha)\big)^{2}}\simeq \frac1{1-r}
\end{equation*}
and so it suffices to show that
\begin{equation*}
\int\limits_{\partial D(0,r)^{4}} \frac{\partial^{4}} {\partial z_{1}\dots\partial z_{4}} v(\g) \prod_{j=1}^{4}\mathrm{d}z_{j} =
o\left(\left(1-r\right)^{-2}\right).
\end{equation*}

We have $v\left(\gamma\right)=\prod_{e}\widehat{K}_{L}\left(e\right)$
and we compute the logarithmic derivative of $v\left(\gamma\right)$
w.r.t. a fixed $z_{j}$. We get
\begin{equation*}
    \frac{\frac{\partial}{\partial z_{j}}v\left(\gamma\right)}{v(\g)}=\sum_e \frac{\frac{\partial}{\partial z_{j}}\widehat{K}_{L}\left(e\right)}{\widehat{K}_{L}\left(e\right)}
\end{equation*}
and note that $\frac{\partial}{\partial z_{j}}\widehat{K}_{L}\left(e\right)$ vanishes unless the edge $e$ joins a vertex labelled $j$ to a vertex labelled $\overline{k}$ for some $k$, or a vertex labelled $\bar{j}$ to a vertex labelled $k^\prime$ for some $k^\prime$. In the former case, using the explicit expression for $\widehat{K}_{L}\left(e\right)$ and differentiating, we get
\[
\frac{\frac{\partial}{\partial z_{j}}\widehat{K}_{L}\left(e\right)}{\widehat{K}_{L}\left(e\right)}= -\frac L2 \frac{\overline{z}_j}{1-|z_j|^2} + L \frac{\overline{z}_k}{1-z_j \overline{z}_k}
\]
while in the latter case we have
\[
\frac{\frac{\partial}{\partial z_{j}}\widehat{K}_{L}\left(e\right)}{\widehat{K}_{L}\left(e\right)}= -\frac L2 \frac{\overline{z}_j}{1-|z_j|^2}.
\]
Since the total number of each type of edge is the same (and equal to $\a$) we get
\[
\frac{\partial}{\partial z_{j}}v\left(\gamma\right)= \frac{L}{1-r^{2}}v\left(\gamma\right) \left( \sum_{k\ne j}E_{j,k}\frac{\overline{z}_{k}-\overline{z}_{j}}{1-z_{j}\overline{z}_{k}}\right)
\]
where $E_{j,k}$ denotes the number of edges joining $j$ to $\overline{k}$, and we have used the fact that $|z_j|=r$. Iterating this, and using the trivial bound $\left|\frac{z-w}{1-z\overline{w}}\right|\le1$,
we see that we can bound
\[
\left|\frac{\partial^{4}}{\partial z_{1}\dots\partial z_{4}}v\left(\gamma\right)\right|\le C\left(L,\gamma\right)\frac{\left|v\left(\gamma\right)\right|}{\left(1-r\right)^{4}}
\]
and so it suffices to see that
\[
\int_{\partial D\left(0,r\right)^{4}} \left|v\left(\gamma\right)\right| \left|\mathrm{d}z_{1}\right| \dots\left|\mathrm{d}z_{4}\right| = o\left(\left(1-r\right)^{2}\right).
\]

We now form a reduced diagram $\gamma^{*}$ by `gluing' together all
of the vertices labelled $j$ or $\overline{j},$ for each $1\le j\le4$; again see Figure~\ref{fig: reg and irreg}. The edges of the resulting diagram have multiplicities, and it is not difficult to see that they must be arranged as
\begin{figure}
  \centering
  \includegraphics[trim={0 15cm 0 0},clip,width=0.4\columnwidth]{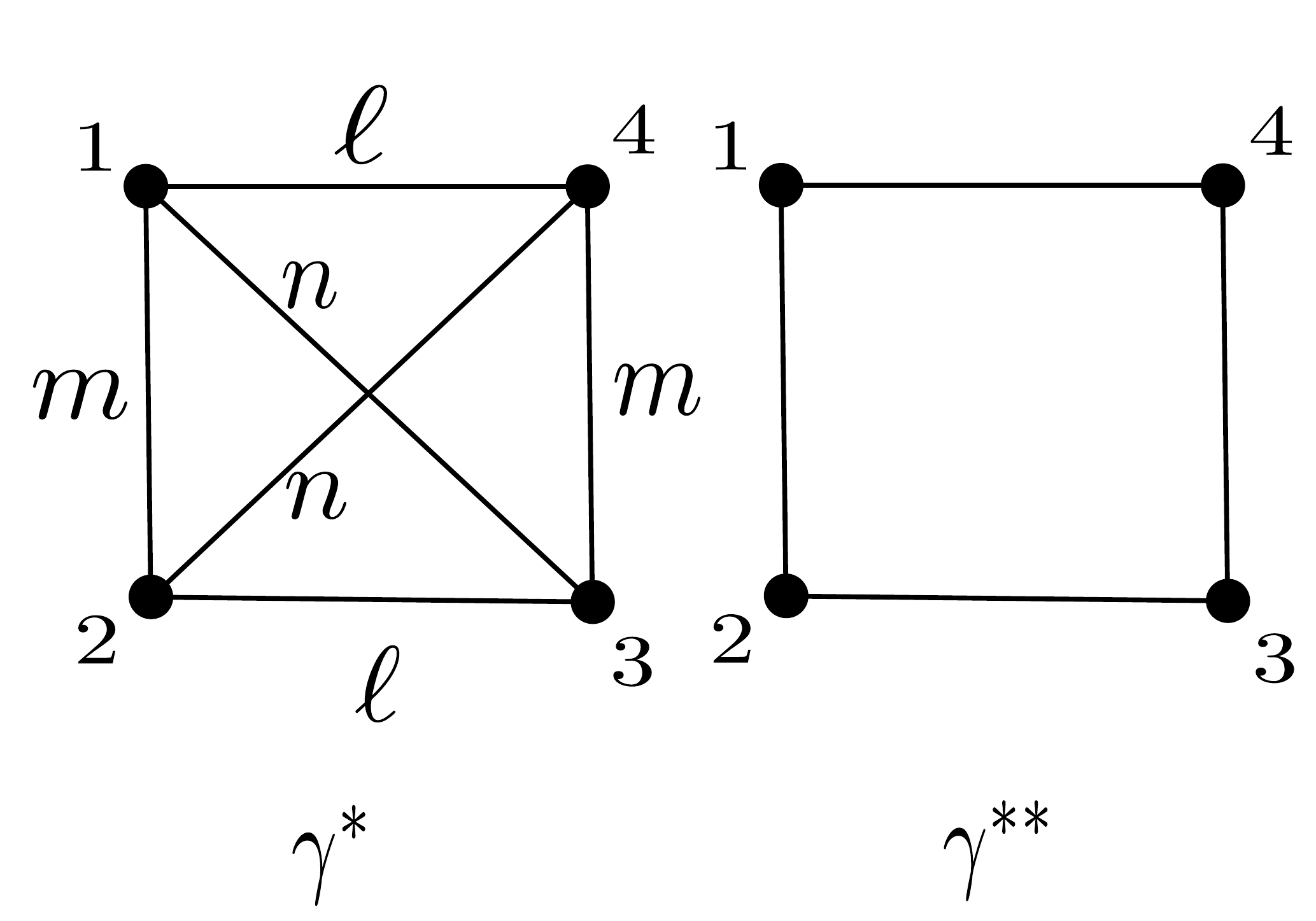}
  \caption{Illustration of a reduced diagram $\g^*$ and the corresponding $\g^{**}$}
  \label{fig: gamma* and gamma**}
\end{figure}
shown on the left of Figure~\ref{fig: gamma* and gamma**}; $\ell$, $m$
and $n$ denote the multiplicity of the edges which satisfy $0\le\ell,m,n<2\alpha$
and $\ell+m+n=2\alpha$. Writing $\mathcal{E}$ for the set of edges we have
\[
\left|v\left(\gamma\right)\right|\le\prod_{\left(j,k\right)\in\mathcal{E}}\left|\widehat{K}_{L}\left(z_{j},z_{k}\right)\right|.
\]
The fact that $\g$ is irregular implies that at most one of $\ell,m,n$ is zero. Now since $\left|\widehat{K}_{L}\left(z,w\right)\right|\le1$ we may delete some of the edges of $\gamma^{*}$ (and re-label the vertices if necessary) to get to the diagram $\gamma^{**}$ depicted on the right of Figure~\ref{fig: gamma* and gamma**}, where each edge has multiplicity $1$. We therefore need to estimate
\begin{align*}
J_{L,r}=\int_{\partial D\left(0,r\right)^{4}}  \Big|\widehat{K}_{L}&\left(z_{1},z_{2}\right)  \widehat{K}_{L}\left(z_{2},z_{3}\right) \widehat{K}_{L}\left(z_{3},z_{4}\right) \widehat{K}_{L}\left(z_{4},z_{1}\right)\Big| \left|\mathrm{d}z_{1}\right| \dots \left|\mathrm{d}z_{4}\right| \\
 & = r^{4}\int_{[-\pi,\pi]^{4}}\left|\widehat{K}_{L}\left(\theta_{2}-\theta_{1}\right)\widehat{K}_{L}\left(\theta_{3}-\theta_{2}\right)\widehat{K}_{L}\left(\theta_{4}-\theta_{3}\right)\widehat{K}_{L}\left(\theta_{1}-\theta_{4}\right)\right|\mathrm{d}\theta_{1}\dots\mathrm{d}\theta_{4},
\end{align*}
where $\widehat{K}_{L}\left(\theta\right)=\widehat{K}_{L,r}\left(\theta\right)=\widehat{K}_{L}\left(r,re^{i\theta}\right)$.
\begin{clm}\label{claim: integral}
Define $I_{L,r}(\t)=1-r$ for $|\t|\le 1-r$ and
\begin{equation*}
     I_{L,r}(\t)=
    \begin{cases}
    \frac{(1-r)^{1+L}}{|\t|^{L}}, & \textrm{if }L>1,\\
    \frac{(1-r)^{2}} {|\t|}\left(1+\log\frac{|\t|}{1-r}\right), & \textrm{if }L=1,\\
    (1-r)^{2L}|\t|^{1-2L}, & \textrm{if }\frac{1}{2}<L<1,
    \end{cases}
\end{equation*}
for $1-r\le|\t|\le\pi$. Extend $I_{L,r}$ to be a $2\pi$-periodic function on $\R$. Then
\begin{equation*}
    \integlim{-\pi}{\pi} {|\widehat{K}_{L}(\t_{4}-\t_{3}) \widehat{K}_{L}(\t_{1}-\t_{4})|} {\t_4} \simeq I_{L,r}(\theta_{3}-\theta_{1}).
\end{equation*}
\end{clm}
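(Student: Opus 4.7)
The plan is to write the integrand as $g(\phi)\,g(\phi+\psi)$, where
\[
 g(\phi) := \lvert \widehat{K}_L(\phi) \rvert = \left( \frac{(1-r^2)^2}{\lvert 1 - r^2 e^{i\phi}\rvert^2}\right)^{L/2} \simeq \left( \frac{\d}{\sqrt{\d^2 + \phi^2}} \right)^L, \quad \d := 1-r,
\]
using the elementary two-sided bound $\lvert 1 - r^2 e^{i\phi}\rvert^2 \simeq \d^2 + \phi^2$ on $\lvert\phi\rvert \le \pi$, after the substitution $\phi = \t_4 - \t_3$ (with $\psi = \t_3 - \t_1$). Since $g$ is even and $2\pi$-periodic, the quantity in the claim is the autocorrelation $\int_{-\pi}^{\pi} g(\phi)\,g(\phi+\psi)\,\dd\phi$.

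For $\lvert\psi\rvert \le \d$ the two factors are comparable, so the integral is $\simeq \int g^2$. A rescaling $\phi = \d u$ reduces this to $\d \int_{-\pi/\d}^{\pi/\d}(1+u^2)^{-L}\,\dd u \simeq \d$, where convergence of the tail uses $L > \tfrac12$; this gives $I_{L,r}(\psi) \simeq \d$ as claimed.

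For $\d < \lvert\psi\rvert \le \pi$ I would split $[-\pi,\pi]$ by the size of $\lvert\phi\rvert$ and $\lvert\phi+\psi\rvert$ relative to $\d$ and $\lvert\psi\rvert$: a region $R_1 = \{\lvert\phi\rvert \le \d\}$ (where $g(\phi)\simeq 1$ and $g(\phi+\psi)\simeq (\d/\lvert\psi\rvert)^L$), a symmetric region $R_2 = \{\lvert\phi+\psi\rvert \le \d\}$, and the complement, further subdivided into $\{\d < \lvert\phi\rvert \le \lvert\psi\rvert/2\}$, $\{\d < \lvert\phi+\psi\rvert \le \lvert\psi\rvert/2\}$ (in each of which one factor is essentially $(\d/\lvert\psi\rvert)^L$ and the other is $\d^L\phi^{-L}$), and the far region $\{\lvert\phi\rvert, \lvert\phi+\psi\rvert \gtrsim \lvert\psi\rvert\}$ where both factors are $\simeq \d^L\phi^{-L}$. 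Each contribution reduces to an elementary integral of the form $\int_\d^{\lvert\psi\rvert}\phi^{-L}\,\dd\phi$ or $\int_{\lvert\psi\rvert}^\pi \phi^{-2L}\,\dd\phi$. Collecting: $R_1$ and $R_2$ each contribute $\d^{1+L}/\lvert\psi\rvert^L$; the intermediate pieces contribute $\d^{1+L}/\lvert\psi\rvert^L$ for $L > 1$, $\d^2 \log(\lvert\psi\rvert/\d)/\lvert\psi\rvert$ for $L = 1$, and $\d^{2L}\lvert\psi\rvert^{1-2L}$ for $\tfrac12 < L < 1$; the far region contributes $\d^{2L}\lvert\psi\rvert^{1-2L}$. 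Summing gives the upper bound $\lesssim I_{L,r}(\psi)$, and keeping the single dominant piece in $R_1$, the intermediate region, or the far region (as appropriate) gives the matching lower bound.

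The main (modest) obstacle is purely organisational: one must check in each of the three cases $L > 1$, $L = 1$, $\tfrac12 < L < 1$ which of the regional contributions actually dominates, and ensure that the lower bound is not lost in the error terms of the comparison $g(\phi) \simeq (\d/\sqrt{\d^2+\phi^2})^L$. No tool heavier than elementary integration of $\phi^{-L}$ is needed.
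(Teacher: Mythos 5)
Your proposal is correct and follows essentially the same route as the paper: the same pointwise bound $|\widehat{K}_L(\phi)|\simeq\big(\tfrac{1-r}{\sqrt{(1-r)^2+\phi^2}}\big)^L$, the same splitting into the two near-singularity regions, the intermediate annuli, and the far region, and the same elementary integrations of $\phi^{-L}$ and $\phi^{-2L}$. The regional contributions you record match those in the paper's proof, so no further comment is needed.
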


By the claim we need to estimate
\[
J_{L,r}\simeq \iint\limits_{[-\pi,\pi]^{2}} I_{L,r}\left(\theta_{3}-\theta_{1}\right)^{2} \mathrm{d}\theta_{1} \mathrm{d}\theta_{3} =2\pi \integlim {-\pi} {\pi} {I_{L,r}(\t)^{2}}{\t}
\]
and doing the integration we get

\[
J_{L,r}\simeq
\begin{cases}
\left(1-r\right)^{3}, & \textrm{if }L>\frac{3}{4},\\
\left(1-r\right)^{3}\log\frac{1}{1-r}, & \textrm{if }L=\frac{3}{4},\\
\left(1-r\right)^{4L}, & \textrm{if }\frac{1}{2}< L<\frac{3}{4},
\end{cases}
\]
so that $J_{L,r}=o\left(\left(1-r\right)^{2}\right)$. It remains only to prove the claim.
\begin{proof}[Proof of Claim~\ref{claim: integral}]
First notice that
\[
\left|\widehat{K}_{L}\left(r,re^{i\theta}\right)\right|=\left(\frac{\left(1-r^{2}\right)^{2}}{\left(1-r^{2}\right)^{2}+2r^{2}\left(1-\cos\theta\right)}\right)^{\frac{L}{2}}\simeq\begin{cases}
1, & \textrm{if }\left|\theta\right|\le\left(1-r\right),\\
\left(\frac{1-r}{\left|\theta\right|}\right)^{L}, & \textrm{if }\left(1-r\right)\le\left|\theta\right|\le\pi.
\end{cases}
\]
Now we just need to do some tedious integration. By periodicity, we
may assume that $\theta_{3}=0$. If $\left|\theta_{1}\right|\ge1-r$
then
\begin{align*}
\integlim{-\pi}{\pi} {|\widehat{K}_{L}(\t_{4}) \widehat{K}_{L}(\t_{1}-\t_{4})|} {\t_4}&\simeq \integlim{|\t_{4}|\le (1-r)/2} {} {\left( \frac{1-r} {|\t_{1}-\t_{4}|} \right)^{L}} {\t_{4}} +\integlim{|\t_{4}-\t_{1}|\le (1-r)/2} {} {\left(\frac{1-r}{|\t_{4}|}\right)^{L}}{\t_{4}}\\
 & \quad +\integlim{|\t_{4}|,|\t_{4}-\t_{1}|\ge (1-r)/2} {} {\left(\frac{\left(1-r\right)^{2}}{\left|\theta_{4}\right|\left|\theta_{1}-\theta_{4}\right|}\right)^{L}}{\theta_{4}}\\
& \simeq 2\cdot\frac{\left(1-r\right)^{1+L}}{\left|\theta_{1}\right|^{L}} + \left(\frac{\left(1-r\right)^{2}} {\left|\theta_{1}\right|}\right)^{L} \integlim {\left(1-r\right)/2\le\left|\theta_{4}\right|\le\left|\theta_{1}\right|/2} {} {\frac{1}{\left|\theta_{4}\right|^{L}}} {\theta_{4}}\\
 & \quad +\left( \frac{\left(1-r\right)^{2}} {\left|\theta_{1}\right|} \right)^{L} \integlim{(1-r)/2\le |\t_{4}-\t_{1}|\le |\t_{1}|/2} {} {\frac{1}{|\theta_{4}-\theta_{1}|^{L} }}{\t_{4}}\\
 & \quad+\integlim {\left|\theta_{4}\right|,\left|\theta_{4}-\theta_{1}\right| \ge \left|\theta_{1}\right|/2} {} {\left(\frac{1-r}{\left|\theta_{4}\right|}\right)^{2L}} {\theta_{4}}
\end{align*}
Performing the integrals we get
\begin{equation*}
    \integlim{-\pi}{\pi} {|\widehat{K}_{L}(\t_{4}) \widehat{K}_{L}(\t_{1}-\t_{4})|} {\t_4} \simeq 
    \begin{cases}
    \frac{\left(1-r\right)^{1+L}}{\left|\theta_{1}\right|^{L}}, & L>1,\\
    \frac{\left(1-r\right)^{2}}{\left|\theta_{1}\right|}\left(1+\log\frac{\left|\theta_{1}\right|}{1-r}\right), & L=1,\\
    \left(1-r\right)^{2L}\left|\theta_{1}\right|^{1-2L}, & \frac{1}{2}<L<1,
\end{cases}
\end{equation*}
for $|\t_1|\ge 1-r$. On the other hand, when $\left|\theta_{1}\right|\le1-r$ we have
\begin{align*}
\integlim{-\pi}{\pi} {|\widehat{K}_{L}(\t_{4}) \widehat{K}_{L}(\t_{1}-\t_{4})|} {\t_4} &
\simeq \integlim {|\t_{4}|,|\t_{4}-\t_{1}|\le 2(1-r)} {} {} {\t_{4}}\\
 & \quad 
 +\integlim {|\t_{4}|\ge2(1-r)\text{ or }|\t_{4}-\t_{1}|\ge2(1-r)} {} {|\widehat{K}_{L}(\t_{4}) \widehat{K}_{L}(\t_{1}-\t_{4})|}  {\theta_{4}}
\end{align*}
Now note that if $|\t_{4}-\t_{1}|\ge2(1-r)$ and $|\t_{1}|\le(1-r)$ then $|\t_{4}|\ge(1-r)$. We get
\begin{equation*}
    \integlim{-\pi}{\pi} {|\widehat{K}_{L}(\t_{4}) \widehat{K}_{L}(\t_{1}-\t_{4})|} {\t_4} \simeq 1-r + \integlim{|\t_{4}|\ge(1-r)}{}{\left( \frac{1-r} {\left|\theta_{4}\right|}\right)^{2L} } {\t_4}\simeq 1-r.\qedhere
\end{equation*}
\end{proof}

\section{Variance for \texorpdfstring{$L=0$}{L=0}}
\label{sec: var comput}
In order to implement the strategy we outlined in Section~\ref{sec: overview} we need sharp estimates for the asymptotic growth of $\var{n_L(r)}$. For $L>0$ these were computed in \cite{B}, and the corresponding result for $L=0$ is as follows.
\begin{prop}\label{prop: var L=0}
We have
\begin{equation*}
    \var{n_{0}\left(r\right)}\overset{r\to1}{\sim} \frac{\pi^{2}} {24\left(1-r\right)^{2} \big(\log\frac{1}{1-r}\big)^{4}}.
\end{equation*}

\end{prop}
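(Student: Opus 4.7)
The plan is to use the Wiener chaos decomposition from Proposition~\ref{prop: Wiener expan}, which gives
\[
\var{n_0(r)} = \sum_{\alpha=1}^\infty \ex{n_0(r;\alpha)^2},
\]
and to show that, in line with the strategy announced for $L<\tfrac12$ in Section~\ref{sec: overview}, only the first component $\ex{n_0(r;1)^2}$ contributes at leading order, while the tail $\sum_{\alpha\ge2}\ex{n_0(r;\alpha)^2}$ is of strictly lower order.

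The main computation is to obtain $\ex{n_0(r;1)^2}$ explicitly. Since $K_0(z,z)=L_r:=\log\tfrac1{1-r^2}$ is constant on $|z|=r$ and $\partial_z K_0(z,z)=\bar z/(1-|z|^2)$, on the integration contour we have
\[
\partial_z \bigl|\widehat{f}_0(z)\bigr|^2 = \frac{f_0'(z)\,\overline{f_0(z)}}{L_r} - \frac{|f_0(z)|^2\,\bar z}{L_r^2(1-r^2)}.
\]
Inserting the power series $f_0(z)=\sum_{m\ge1}\zeta_m z^m/\sqrt m$, only the diagonal terms $m=n$ survive the integration $\tfrac{1}{2\pi i}\oint_{|z|=r}\cdots dz$, and I obtain the explicit first-chaos representation
\[
n_0(r;1) = \sum_{n\ge1} (|\zeta_n|^2 - 1)\,c_n(r), \qquad c_n(r) = \frac{r^{2n}}{L_r} - \frac{r^{2n+2}}{n(1-r^2)\,L_r^2}.
\]
By independence of the $|\zeta_n|^2-1$'s (each with unit variance), $\ex{n_0(r;1)^2}=\sum_{n\ge1} c_n(r)^2$; expanding the square yields three elementary sums
\[
\sum_{n\ge1} c_n(r)^2 = \frac{r^4}{(1-r^4)\,L_r^2} - \frac{2r^2 \log\tfrac1{1-r^4}}{(1-r^2)\,L_r^3} + \frac{r^4\,\dl(r^4)}{(1-r^2)^2\,L_r^4}.
\]
As $r\to1$ the first two terms are both of order $(1-r^2)^{-1}L_r^{-2}$ (the logarithmic factors cancel), whereas $\dl(r^4)\to\pi^2/6$ makes the third term dominant. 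Combined with $(1-r^2)^2\sim 4(1-r)^2$ and $L_r\sim\log\tfrac1{1-r}$, this gives
\[
\ex{n_0(r;1)^2} \sim \frac{\pi^2/6}{(1-r^2)^2 L_r^4} \sim \frac{\pi^2}{24(1-r)^2\bigl(\log\tfrac1{1-r}\bigr)^4}.
\]

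It remains to show $\sum_{\alpha\ge2}\ex{n_0(r;\alpha)^2} = o\bigl((1-r)^{-2}L_r^{-4}\bigr)$. Starting from the first identity of Lemma~\ref{lem: 2nd/4th moment chaos} (which is valid for all $L\ge0$), an analogous computation to the one deriving the second line there yields
\[
\ex{n_0(r;\alpha)^2} = \frac{1}{2\pi}\int_{-\pi}^\pi \frac{|\ell_Z(\phi)|^{2\alpha}}{L_r^{2\alpha}} \bigl|B(\phi)\bigr|^2 \dd\phi,
\]
with $\ell_Z(\phi)=\log\tfrac1{1-r^2 e^{-i\phi}}$ and $B(\phi) = \tfrac{r^2 e^{-i\phi}/(1-r^2 e^{-i\phi})}{\ell_Z(\phi)} - \tfrac{r^2/(1-r^2)}{L_r}$ playing the role of the boundary factor in the $L>0$ case. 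Summing the geometric series $\sum_{\alpha\ge2}(|\ell_Z|/L_r)^{2\alpha}=(|\ell_Z|/L_r)^4/(1-(|\ell_Z|/L_r)^2)$ and splitting the integral at $|\phi|\asymp 1-r$: for $|\phi|\lesssim 1-r$, Taylor expansion gives $|B(\phi)|^2 \lesssim \phi^2/((1-r)^4 L_r^2)$ and $1-(|\ell_Z|/L_r)^2 \gtrsim \phi^2/((1-r)^2 L_r)$, so the inner contribution is $O(1/((1-r)L_r))$; for $|\phi|\gtrsim 1-r$, $|\widehat K_0|$ is bounded away from $1$, $|B|\lesssim 1/((1-r)L_r)$, and the bound $|\ell_Z|\lesssim 1+\log(1/|\phi|)$ gives $O(1/((1-r)^2 L_r^6))$. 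Both are $o\bigl((1-r)^{-2}L_r^{-4}\bigr)$ since $(1-r)L_r^3\to 0$ and $L_r\to\infty$.

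The main obstacle is the delicate balance near $\phi=0$ between the singularity of $(1-|\widehat K_0|^2)^{-1}$ produced by summing the geometric series and the quadratic vanishing of $|B|^2$. Compared with the power-law case $L>0$ treated in Section~\ref{sec: L>1/2}, the logarithmic kernel $K_0$ decays considerably more slowly off-diagonal, and this forces the multi-scale splitting described above; the difficulty is sidestepped for the dominant $\alpha=1$ term by the direct series computation.
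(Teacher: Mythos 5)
Your proof is correct in substance but takes a genuinely different route from the paper. The paper does not use the chaos decomposition for this proposition at all: it works directly with the exact variance formula \eqref{eq: varformula} of Kiro--Nishry, proves a general asymptotic (Proposition~\ref{varasymp}) stating that $\var{n_f(r)}\sim\tfrac{A^2(t)}{H^2(t)}\sum_m b_m^4$ under hypotheses (A1)--(A5) --- via dominated convergence away from $\theta=0$ together with the pointwise bound $I_H(t;\theta)\le A'(t)$ near $\theta=0$ --- and then verifies those hypotheses for $K_0$, with $\sum_m b_m^4=\sum_m m^{-2}=\pi^2/6$ producing the constant. You instead isolate the second chaos, compute $\ex{n_0(r;1)^2}$ exactly as a dilogarithm (your $c_n(r)$ agrees with Proposition~\ref{prop: first chaos} for $L=0$, and the leading term $r^4\dl(r^4)(1-r^2)^{-2}L_r^{-4}$ with $L_r=\log\tfrac1{1-r^2}$ gives the right constant), and then bound the tail $\sum_{\alpha\ge2}$ by summing the geometric series inside the per-chaos integral, which is exactly the paper's identity \eqref{eq: tail L^2} specialised to $G=\log\tfrac1{1-z}$. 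Since the KN formula is precisely the sum over all $\alpha$ of these integrals, the two arguments are two organisations of the same quantity; yours has the advantage of simultaneously establishing the $L=0$ case of Proposition~\ref{prop: var first chaos} (dominance of the second chaos), which the paper proves separately, while the paper's version is more general and recovers the $0<L<\tfrac12$ asymptotics in the same stroke.

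One intermediate claim needs repair. For $|\phi|\gtrsim 1-r$ it is \emph{not} true that $|\widehat{K}_0|=|\ell_Z(\phi)|/L_r$ is bounded away from $1$: at $|\phi|=A(1-r^2)$ with $A$ fixed one has $|\ell_Z|^2=L_r^2-L_r\log(1+A^2)+O(1)$, so $1-|\widehat{K}_0|^2\asymp\log(1+A^2)/L_r\to0$. The correct uniform lower bound on the outer range is $1-|\widehat{K}_0|^2\gtrsim 1/L_r$ (consistent with your inner-range bound $\gtrsim\phi^2(1-r)^{-2}L_r^{-1}$ at the matching point $|\phi|\asymp 1-r$). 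This costs one extra factor of $L_r$ and turns your outer contribution into $O\big((1-r)^{-2}L_r^{-5}\big)$ rather than $O\big((1-r)^{-2}L_r^{-6}\big)$; since this is still $o\big((1-r)^{-2}L_r^{-4}\big)$, the conclusion is unaffected.
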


We will actually give a proof that recovers the result from \cite{B} for $L<\tfrac12$ (with no regard for the error term). Consider a GAF (in the unit disc) of the form
\[
f(z) = \sum_{m\ge 0} b_m \zeta_m z^m
\]
where $b_m\ge 0$ and denote by
\[
G(z) = \sum_{m \ge 0} b_m^2 z^m
\]
its \emph{covariance function}, so that $K(z,w) =\ex{f(z)\overline{f(w)}}= G(z \bar{w})$ is the corresponding covariance kernel. It will be convenient to use the following notation from \cite{KN} for $\Re{\z} \le 0$:
\[
H(\z) = G\big(e^\z\big),\qquad A(\z) = \frac{H^\prime(\z)}{H(\z)}.
\]
Using the above notation we have the following formula for the variance of the number of zeroes of $f$ in the disc of radius $r<1$ (see \cite{KN}*{Appendix A})
\begin{equation}\label{eq: varformula}
\var{n_{f}(r)} = \frac{1}{2\pi} \integlim{-\pi}{\pi}{\frac{|H(t)H^\prime(t + i \theta) - H(t + i \theta) H^\prime(t)|^2}{H^2(t) (H^2(t) - |H(t + i \theta)|^2)}}{\theta}, \quad \mbox{ with } e^t = r^2.
\end{equation}

We write $\z = t + i \theta$, with $t \le 0$, and mention that $|H(t+i\theta)| \le H(t)$. In order to derive an asymptotic expression for the variance we will make the following assumptions; they will allow us to show that the integrand above may be approximated by $\frac{A^2(t)} {H^2(t)} |H(i\theta)|^2$ when $\t$ does not belong to a small neighbourhood of $0$ and apply the Dominated Convergence Theorem.
\begin{assume}
    \item \label{l2cond} $\sum_{m\ge 0} b_m^4 < \infty.$

    \item \label{growthH} $H(t + i \theta) = o(H(t))$ as $t \to 0^{-}$ for every $\theta \in [-\pi,\pi] \setminus \{0\}$.
    
    \item \label{growthA} There exist $t_0<0$ and a constant $C 
    \ge 1$ such that for $t_0<t<0$ we have $\left| A(t + i \theta) \right| \le C A(t)$ for every $\t\in[-\pi,\pi]$. Furthermore, for every $\theta \in [-\pi,\pi] \setminus \{0\}$ we have $A(t+i\theta) = o\left(A(t)\right)$ as $t \to 0^{-}$.
    
    \item \label{boundH} There is a function $\Delta: (-\infty, 0) \to [0,\pi]$ such that $\Delta(t) \downarrow 0$, as $t\to 0^{-}$ and moreover if $t$ is sufficiently close to $0$ then
    \begin{equation*}
        |\theta| \ge \Delta(t)\implies |H(t + i \theta)| \le \tfrac12 H(t).
    \end{equation*}
    
    \item \label{boundVar} Additionally $\Delta(t) A^\prime(t) = o(A^2(t) H^{-2}(t))$ as $t\to 0^{-}$.
\end{assume}

\begin{rmk}
Assumption \eqref{l2cond} implies that $G$ belongs to the Hardy space $H^2(\D)$. Thus, there is a function $M \in L^2([-\pi,\pi])$ (e.g., the radial or non-tangential maximal function) that satisfies, for $\theta \in [-\pi,\pi]$,
\begin{equation}\label{Hmajorant}
\sup_{t < 0} |H(t + i\theta)| \le M(\theta).
\end{equation}
In particular, the (radial) boundary values $H(i \theta)$ exist for a.e. $\theta \in \T$.

\end{rmk}

\begin{prop}\label{varasymp}
Put $e^t = r^2$ and let $f$ be a GAF whose covariance function $G$ satisfies the above assumptions. Then
\[
\var{n_{f}(r)} = (1+o(1)) \frac{A^2(t)}{H^2(t)} \sum_{m\ge0} b_m^4 = (1+o(1)) \frac{(G^\prime(r^2))^2}{(G(r^2))^4} \sum_{m\ge0} b_m^4, \mbox{ as } r \to 1.
\]
\end{prop}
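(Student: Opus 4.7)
The plan is to identify the dominant piece of the integral in \eqref{eq: varformula} and to show the rest is negligible. First I would use $H' = HA$ to factor the numerator of the integrand as
\[
|H(t)H'(t+i\theta) - H(t+i\theta)H'(t)|^2 = H^2(t)|H(t+i\theta)|^2|A(t+i\theta) - A(t)|^2,
\]
so the integrand simplifies to
\[
J_t(\theta) = \frac{|H(t+i\theta)|^2\,|A(t+i\theta) - A(t)|^2}{H^2(t) - |H(t+i\theta)|^2}.
\]
Then I would split the integral over $\theta \in [-\pi,\pi]$ at $|\theta| = \Delta(t)$ from assumption \eqref{boundH} and treat the two regions separately.

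For the outer region $|\theta| \ge \Delta(t)$, which will supply the main term, I would factor
\[
J_t(\theta) = \frac{A^2(t)}{H^2(t)} \cdot \frac{|H(t+i\theta)|^2 \cdot |A(t+i\theta) - A(t)|^2/A^2(t)}{1 - |H(t+i\theta)|^2/H^2(t)}.
\]
By \eqref{boundH} the denominator is at least $3/4$; by \eqref{growthA} the ratio $|A(t+i\theta) - A(t)|^2/A^2(t)$ is bounded by $(C+1)^2$ and tends to $1$ pointwise as $t \to 0^-$ for each fixed $\theta \ne 0$; and by \eqref{growthH} together with the Hardy majorant $M$ from \eqref{Hmajorant} (supplied by \eqref{l2cond}) the factor $|H(t+i\theta)|^2$ converges a.e.\ to $|H(i\theta)|^2$ and is dominated by $M^2(\theta) \in L^1$. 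Dominated convergence then yields
\[
\frac{1}{2\pi}\integlim{\Delta(t) \le |\theta| \le \pi}{}{J_t(\theta)}{\theta} = (1+o(1))\,\frac{A^2(t)}{H^2(t)} \cdot \frac{1}{2\pi}\integlim{-\pi}{\pi}{|H(i\theta)|^2}{\theta} = (1+o(1))\,\frac{A^2(t)}{H^2(t)}\sum_{m\ge 0} b_m^4,
\]
where the last equality is Parseval applied to $G(e^{i\theta}) = \sum_m b_m^2 e^{im\theta}$.

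For the inner region $|\theta| < \Delta(t)$ I would show the contribution is a lower-order error. A Taylor expansion around $\theta = 0$, using the identity $H''H - (H')^2 = H^2 A'$, gives
\[
H^2(t) - |H(t+i\theta)|^2 = H^2(t) A'(t)\theta^2 \bigl(1+o_\theta(1)\bigr), \qquad |A(t+i\theta) - A(t)|^2 = A'(t)^2 \theta^2 \bigl(1+o_\theta(1)\bigr),
\]
so that, together with $|H(t+i\theta)|^2 \le H^2(t)$, one obtains $J_t(\theta) \lesssim A'(t)$ on a small enough neighbourhood of $0$. Integrating gives $\frac{1}{2\pi}\integlim{|\theta| < \Delta(t)}{}{J_t(\theta)}{\theta} = O(\Delta(t) A'(t)) = o(A^2(t)/H^2(t))$ by \eqref{boundVar}. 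Combining the two regions gives the claimed asymptotic; the equivalent form in terms of $G(r^2), G'(r^2)$ follows from $H(t)=G(r^2)$ and $H'(t)=r^2 G'(r^2)$, since $r^4\to 1$.

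The main technical obstacle is making the Taylor estimates in the inner region genuinely uniform in $\theta \in (-\Delta(t),\Delta(t))$ as $t\to 0^-$: the $o_\theta(1)$ corrections depend on higher derivatives of $A$ at complex arguments that are not directly controlled by the listed assumptions. A clean remedy is to pass to the probabilistic description, identifying $H(t+i\theta)/H(t)$ with the characteristic function $\varphi(\theta)$ of a random variable $M$ with law $\P[M=k]\propto b_k^2 e^{kt}$, so $A(t) = \E[M]$ and $A'(t) = \Vr(M)$; then $J_t(\theta)$ becomes $|\varphi'(\theta) - iA(t)\varphi(\theta)|^2/(1-|\varphi(\theta)|^2)$, and both the required upper bound $|\varphi'(\theta) - iA(t)\varphi(\theta)| \le |\theta| A'(t)$ and the lower bound $1 - |\varphi(\theta)|^2 \gtrsim \theta^2 A'(t)$ on a quantitatively controlled neighbourhood of $0$ follow from standard c.f.\ estimates, with assumption \eqref{boundVar} playing exactly the role of the smallness condition that absorbs the resulting error.
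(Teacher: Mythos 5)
Your decomposition and your treatment of the outer region $|\theta|\ge\Delta(t)$ coincide with the paper's proof essentially verbatim: the same factorisation of the integrand, the same use of the assumptions on $A$ and on $\Delta$ to bound the ratio $R(t;\theta)$, the same domination by the Hardy-space majorant $M^2\in L^1$ coming from $\sum b_m^4<\infty$, and the same identification of the limit with $\sum_m b_m^4$ via Parseval. The one place you diverge is the inner region $|\theta|<\Delta(t)$, and that is where there is a gap. The paper does not Taylor-expand there at all: it invokes the pointwise bound $I_H(t;\theta)\le A'(t)$, valid for \emph{all} $\theta\in[-\pi,\pi]$ and all $t<0$, from \cite{KN}*{Corollary 5.3}, and then applies the assumption $\Delta(t)A'(t)=o(A^2(t)H^{-2}(t))$.

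The obstacle you flag in your Taylor route is real, and your probabilistic remedy as stated does not fully remove it: the lower bound $1-|\varphi(\theta)|^2\gtrsim\theta^2 A'(t)$ holds only on a neighbourhood of $0$ whose size depends on the law of $M$ (hence on $t$), and nothing in the listed assumptions guarantees that this neighbourhood contains $(-\Delta(t),\Delta(t))$ uniformly as $t\to0^{-}$. The fix is to not bound numerator and denominator separately. In your own framework, with $\P[M=k]=b_k^2e^{kt}/H(t)$ and $\varphi(\theta)=\E[e^{i\theta M}]$, one has $\varphi'(\theta)-iA(t)\varphi(\theta)=i\,\E\big[(M-\E[M])(e^{i\theta M}-\varphi(\theta))\big]$, so Cauchy--Schwarz gives
\[
|\varphi'(\theta)-iA(t)\varphi(\theta)|^2\le\Vr(M)\,\E\big[|e^{i\theta M}-\varphi(\theta)|^2\big]=A'(t)\big(1-|\varphi(\theta)|^2\big),
\]
that is, $J_t(\theta)\le A'(t)$ for every $\theta$, with no neighbourhood restriction and no Taylor expansion. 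This is precisely the bound the paper imports from \cite{KN}, and with it your argument closes and agrees with the paper's.
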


In our case we have
\[
H_L(\z) =
\begin{cases}
-\log(1 - e^\z) & ,\,L = 0;\\
(1-e^\z)^{-L} & ,\,L \in (0, \tfrac12),\\
\end{cases}\quad
A_L(\z) =
\begin{cases}
-\frac{e^\z}{\left(1-e^\z\right) \log \left(1-e^\z\right)} & ,\,L = 0;\\
\frac{L e^\z}{1-e^\z} & ,\,L \in (0, \tfrac12),\\
\end{cases}
\]
and
\[
A_L^\prime(\z) =
\begin{cases}
-\frac{e^\z \left(e^\z+\log \left(1-e^\z\right)\right)}{\left(1-e^\z\right)^2 \log ^2\left(1-e^\z\right)} & ,\,L = 0;\\
\frac{L e^\z}{\left(1 - e^\z\right)^2} & ,\,L \in (0, \tfrac12).\\
\end{cases}
\]
It is not difficult to check that the GAFs $f_L$ satisfy the assumptions of Proposition \ref{varasymp}, for $L \in [0,\tfrac12)$, where we may take
\[
\Delta_L(t) =
\begin{cases}
\frac{1}{\log^4\left(1-e^t\right)} & ,\,L = 0;\\
\left(1 - e^t\right)^{L + \tfrac12} & ,\,L \in (0, \tfrac12).\\
\end{cases}
\]
We conclude that, as $r \to 1$,
\[
\var{n_{L}(r)}\sim \begin{cases}
\frac{\pi^2}{24} \cdot \frac{1}{(1-r)^2 \log^4(1-r)} & ,\,L = 0;\\
\frac{L^2 \Gamma(1-2L)}{4^{1-L}\Gamma^2(1-L)} \cdot \frac{1}{(1-r)^{2(1-L)}} & ,\,L \in (0, \tfrac12).\\
\end{cases}
\]

\begin{proof}[Proof of Proposition \ref{varasymp}]
We rewrite the integrand in \eqref{eq: varformula} as follows
\[
I_H(t;\theta) = \frac{|H(t)H^\prime(t + i \theta) - H(t + i \theta) H^\prime(t)|^2}{H^2(t) (H^2(t) - |H(t + i \theta)|^2)} = \frac{|A(t) - A(t + i \theta)|^2 |H(t+i\theta)|^2}{H^2(t) - |H(t + i \theta)|^2},
\]
and split the integral
\[
\var{n_{f}(r)} = \frac{1}{2\pi} \integllim{J_1 \uplus J_2}{I_H(t;\theta)}{\theta},
\]
where $J_1 = \{ \Delta(t) \le |\theta| \le \pi \}$ and $J_2 = \{ |\theta| \le \Delta(t) \}$. 

On $J_1$ we re-write the integrand as
\[
I_H(t;\theta) = \frac{A^2(t) |H(t + i\theta)|^2}{H^2(t)} R(t;\theta).
\]
where
\[
R(t;\theta) = \left| 1 - \frac{A(t+i\theta)}{A(t)}\right|^2 \left(1 - \frac{|H(t + i\theta)|^2}{H^2(t)} \right)^{-1}.
\]
By Assumptions \eqref{growthA} and \eqref{boundH} we have that $|R(t;\theta)| \le C^\prime$ for $t$ sufficiently close to $0$. Combining this with \eqref{Hmajorant} we see that we may apply the Dominated Convergence Theorem, and using Assumptions \eqref{growthH} and \eqref{growthA} we get
\[
\lim_{t\to0^{-}} \frac{1}{2\pi} \integllim{J_1}{I_H(t;\theta) \left(\frac{A^2(t)}{H^2(t)} \right)^{-1}}{\theta}
= \frac{1}{2\pi} \integlim{-\pi}{\pi}{|H(i \theta)|^2}{\theta} = \sum_{m\ge 0} b_m^4.
\]

On $J_2$, we use the bound\footnote{\cite{KN} uses the notation $B = A^\prime$.} $I_H(t;\theta) \le A^\prime(t)$ which is valid for all $\theta \in [-\pi,\pi]$ and $t<0$ (\cite{KN}*{Corollary 5.3}). Using Assumption \eqref{boundVar} we get
\[
\frac{1}{2\pi} \integllim{J_2}{I_H(t;\theta)}{\theta} \le \frac{\Delta(t)}{\pi} A^\prime(t) = o\left( \frac{A^2(t)}{H^2(t)} \right), \quad \mbox{ as } t \to 0^{-}.\qedhere
\]
\end{proof}

\section{Non-CLT for \texorpdfstring{$0\le L< \tfrac12$}{0<L<1/2} and CLT for  \texorpdfstring{$L=\tfrac12$}{L=1/2}}\label{sec: L<1/2}
In this section we complete the proof of Theorem~\ref{thm: main}. As we outlined in Section~\ref{sec: overview}, we will show that the main contribution comes from $n_{L}(r;1)$ and we begin by deriving another expression for it.

\subsection{An explicit formula for \texorpdfstring{$n_{L}(r;1)$}{nL(r;1)}.}

We begin with an elementary but useful lemma about deterministic power series.
\begin{lem}
Suppose that $f(z)=\sum b_{m}z^{m}$ has radius of convergence $1$. Then, for $0<r<1$,
\begin{itemize}
\item $\frac{1}{2\pi i}\integlim {\partial D\left(0,r\right)} {} {\overline{z}\left|f(z)\right|^{2}} {z} = r^{2} \sum |b_{m}|^{2}r^{2m}$, and
\item $\frac{1}{2\pi i} \integlim {\partial D\left(0,r\right)} {} {f'(z)\overline{f(z)}} {z}= \sum m |b_{m}|^{2}r^{2m}$.
\end{itemize}

\end{lem}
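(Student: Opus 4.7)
The plan is to reduce both integrals to standard Fourier/Parseval-type identities on the circle of radius $r$. First I would parametrize $\partial D(0,r)$ by $z = re^{i\theta}$ for $\theta \in [-\pi,\pi]$, so that $dz = ire^{i\theta}\,d\theta$, $\bar z \, dz = ir^{2}\,d\theta$, and $dz = iz\,d\theta$. This rewrites the first integral as
\[
\frac{1}{2\pi i}\integllim{\partial D(0,r)}{\overline{z}|f(z)|^{2}}{z} = \frac{r^{2}}{2\pi}\integlim{-\pi}{\pi}{\bigl|f(re^{i\theta})\bigr|^{2}}{\theta},
\]
and the second as
\[
\frac{1}{2\pi i}\integllim{\partial D(0,r)}{f'(z)\overline{f(z)}}{z} = \frac{1}{2\pi}\integlim{-\pi}{\pi}{re^{i\theta}\,f'(re^{i\theta})\,\overline{f(re^{i\theta})}}{\theta}.
\]

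Next I would substitute the power series $f(z) = \sum_{m} b_{m}z^{m}$ (and termwise $f'(z) = \sum_{m} m b_{m}z^{m-1}$ for the second identity). Since $r<1$ lies strictly inside the disc of convergence, both series converge absolutely and uniformly on $\{|z|=r\}$, so I may interchange sum and integral. The resulting double sums have $(m,n)$-terms proportional to $e^{i(m-n)\theta}$, and the orthogonality relation $\frac{1}{2\pi}\int_{-\pi}^{\pi} e^{ik\theta}\,d\theta = \delta_{k,0}$ collapses them to the diagonal, yielding
\[
\frac{r^{2}}{2\pi}\integlim{-\pi}{\pi}{\sum_{m,n} b_{m}\overline{b_{n}}\,r^{m+n} e^{i(m-n)\theta}}{\theta} = r^{2}\sum_{m}|b_{m}|^{2}r^{2m},
\]
and, after absorbing the extra factor $re^{i\theta}$ that appears in the second integrand,
\[
\frac{1}{2\pi}\integlim{-\pi}{\pi}{\sum_{m,n} m b_{m}\overline{b_{n}}\,r^{m+n} e^{i(m-n)\theta}}{\theta} = \sum_{m}m|b_{m}|^{2}r^{2m}.
\]

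There is no real obstacle here: the lemma is essentially Parseval's identity applied to $f$ and to $zf'$ on the circle of radius $r$, and absolute convergence on the compact set $\{|z|=r\}$ makes every interchange legitimate. The only reason to isolate it as a lemma is for convenient reuse when computing the second moment of $n_{L}(r;1)$ in the next subsection.
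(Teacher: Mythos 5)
Your proof is correct and follows exactly the paper's argument: parametrize the circle, expand into the double sum, justify the interchange by uniform convergence on $\{|z|=r\}$, and collapse to the diagonal via orthogonality of $e^{ik\theta}$. Nothing to add.
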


\begin{proof}
We have
\begin{align*}
\frac{1}{2\pi i} \integlim {\partial D\left(0,r\right)} {} {\overline{z}\left|f(z)\right|^{2}} {z} & = \frac{1}{2\pi i} \integlim {-\pi} {\pi} {re^{-i\theta} \sum_{m,m'=0}^{\infty}b_{m}\overline{b}_{m'}r^{m+m'} e^{i\theta\left(m-m'\right)} ri e^{i\theta}} {\theta}\\
& =r^{2} \sum_{m,m'=0}^{\infty}b_{m}\overline{b}_{m'}r^{m+m'} \frac{1}{2\pi} \integlim {-\pi} {\pi} {e^{i\theta\left(m-m'\right)}} {\theta}\\
& =r^{2} \sum_{m=0}^{\infty}\left|b_{m}\right|^{2}r^{2m}
\end{align*}
since the Taylor series that defines $f$ converges uniformly on compact
subsets of $\D$. Similarly
\begin{align*}
\frac{1}{2\pi i} \integlim {\partial D\left(0,r\right)} {} {f'(z)\overline{f(z)}} {z} & =\frac{1}{2\pi i} \integlim {-\pi} {\pi} {\sum_{m,m'=0}^{\infty} m b_{m}\overline{b}_{m'} r^{m-1+m'} e^{i\t(m-1-m')} ri e^{i\t}} {\t}\\
& =\sum_{m,m'=0}^{\infty} m b_{m}\overline{b}_{m'} r^{m+m'} \frac{1}{2\pi} \integlim {-\pi} {\pi} {e^{i\t (m-m')}} {\t}\\
 & =\sum_{m=0}^{\infty} m |b_{m}|^{2} r^{2m}.
\end{align*}
\end{proof}
\begin{prop}\label{prop: first chaos}
We have
\[
n_{L}\left(r;1\right)=\left(1-r^{2}\right)^{L-1} \sum_{m=0}^{\infty}a_{m,L}\big(m\left(1-r^{2}\right)-Lr^{2}\big)\left(\left|\zeta_{m}\right|^{2}-1\right)r^{2m}
\]
for $L>0$ and
\[
n_{0}\left(r;1\right)=\frac{1}{\left(1-r^{2}\right) \left(\log\frac{1}{1-r^{2}}\right)^2}\sum_{m=1}^{\infty}a_{m,0}\left(m\left(1-r^{2}\right)\log\frac{1}{1-r^{2}}-r^{2}\right)\left(\left|\zeta_{m}\right|^{2}-1\right)r^{2m}.
\]
\end{prop}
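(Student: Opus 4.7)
The plan is to unfold the definition $n_L(r;1) = \frac{1}{2\pi i}\int_{\partial D(0,r)} \frac{\partial}{\partial z}\wikk{\widehat{f}_L(z)}{1}\,dz$ from Proposition~\ref{prop: Wiener expan}, evaluate the integrand explicitly, and then apply the preceding lemma on deterministic power series. Since $\wik{|\z|^{2}}=|\z|^{2}-1$, the first Wick power is simply $\wikk{\widehat{f}_L(z)}{1}=|\widehat{f}_L(z)|^{2}-1=|f_L(z)|^{2}K_L(z,\bar z)^{-1}-1$. The constant $-1$ is annihilated by $\partial_z$, so the job reduces to differentiating $|f_L(z)|^{2}K_L(z,\bar z)^{-1}$ with the product rule, keeping in mind that $K_L(\,\cdot\,,\bar z)$ is holomorphic, while $K_L(z,\,\cdot\,)$ contributes a factor whose $z$-derivative we can read off directly from the explicit formula \eqref{eq: cov}.

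For $L>0$ this gives
\[
\frac{\partial}{\partial z}\left(|f_L(z)|^{2}(1-|z|^{2})^{L}\right)
= f_L'(z)\overline{f_L(z)}\,(1-|z|^{2})^{L}-L\bar z\,|f_L(z)|^{2}\,(1-|z|^{2})^{L-1},
\]
and on $\partial D(0,r)$ the prefactors $(1-r^{2})^{L}$ and $(1-r^{2})^{L-1}$ are constant and come outside the contour integral. Applying the lemma with $b_m=\sqrt{a_{m,L}}$ to the two resulting integrals converts them into $\sum_{m}m\,a_{m,L}|\zeta_m|^{2}r^{2m}$ and $r^{2}\sum_{m}a_{m,L}|\zeta_m|^{2}r^{2m}$ respectively. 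Combining and factoring out $(1-r^{2})^{L-1}$ yields
\[
n_L(r;1)=(1-r^{2})^{L-1}\sum_{m=0}^{\infty}a_{m,L}\bigl(m(1-r^{2})-Lr^{2}\bigr)\,|\zeta_m|^{2}\,r^{2m}.
\]
To match the proposed formula I need to replace $|\zeta_m|^{2}$ with $|\zeta_m|^{2}-1$, which is legitimate once I check that the deterministic series obtained by setting $|\zeta_m|^{2}=1$ vanishes. Using $\sum_m a_{m,L}x^{m}=(1-x)^{-L}$ and $x\frac{d}{dx}(1-x)^{-L}=Lx(1-x)^{-L-1}$ with $x=r^{2}$, this cancellation is immediate (and is consistent with $\ex{n_L(r;1)}=0$).

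The case $L=0$ is formally identical: now $K_0(z,\bar z)=\log\tfrac{1}{1-|z|^{2}}$, and $\frac{\partial}{\partial z}\log\tfrac{1}{1-|z|^{2}}=\bar z/(1-|z|^{2})$, so
\[
\frac{\partial}{\partial z}\frac{|f_0(z)|^{2}}{\log\frac{1}{1-|z|^{2}}}
=\frac{f_0'(z)\overline{f_0(z)}}{\log\frac{1}{1-|z|^{2}}}
-\frac{\bar z\,|f_0(z)|^{2}}{(1-|z|^{2})\bigl(\log\frac{1}{1-|z|^{2}}\bigr)^{2}}.
\]
Using $b_m=1/\sqrt{m}$ (so $|b_m|^{2}=a_{m,0}=1/m$ for $m\ge1$), the same lemma and constancy of $|z|$ on $\partial D(0,r)$ produce the stated formula, and the analogue of the mean-zero check amounts to the identity $(1-x)\log\tfrac{1}{1-x}\sum_{m\ge1}x^{m}=x\sum_{m\ge1}x^{m}/m$, both sides equalling $x\log\tfrac{1}{1-x}$.

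There is no real obstacle here beyond bookkeeping. The only subtle point is making sure the product-rule derivative is taken correctly, since $K_L(z,\bar z)$ involves $z$ both holomorphically and anti-holomorphically; everything else is algebra together with the two integral identities from the lemma.
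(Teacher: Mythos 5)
Your proposal is correct and follows essentially the same route as the paper: unfold the definition of $n_L(r;1)$, differentiate $|\widehat f_L|^2$ by the product rule, apply the preliminary lemma on power series term by term, and pass from $|\zeta_m|^2$ to $|\zeta_m|^2-1$ via the vanishing of the deterministic series (which the paper justifies by $\ex{n_L(r;1)}=0$ and notes in a footnote can also be checked directly, as you do). The only cosmetic slip is writing "$b_m=\sqrt{a_{m,L}}$" where you mean the lemma is applied pathwise with coefficients $\sqrt{a_{m,L}}\,\zeta_m$; your resulting sums are nonetheless correct.
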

\begin{proof}
Recall that
\[
n_{L}\left(r;\alpha\right)= \frac{\left(-1\right)^{\alpha+1}} {\alpha\left(\alpha!\right)} \frac{1}{2\pi i} \integlim {\partial D\left(0,r\right)} {} {\frac{\partial}{\partial z} \wikk{\widehat{f}_{L}\left(z\right)} {\alpha}} {z}
\]
and that $\wikk{\zeta}{}=\left|\zeta\right|^{2}-1$. This yields,
for $L>0$,
\begin{align*}
n_{L}\left(r;1\right) & =\frac{1}{2\pi i} \integlim {\partial D\left(0,r\right)}{}{\frac{\partial}{\partial z}\left|\widehat{f}_{L}\left(z\right)\right|^{2}}{z}\\
& =\frac{1}{2\pi i} \integlim {\partial D\left(0,r\right)} {} {-L\overline{z} \left(1-|z|^{2}\right)^{L-1} \left|f_{L}\left(z\right)\right|^{2} + \left(1-|z|^{2}\right)^{L}f'_{L}\left(z\right)\overline{f_{L}\left(z\right)}}{z}\\
& =\left(1-r^{2}\right)^{L-1} \sum_{m=0}^{\infty}a_{m,L} \big(m\left(1-r^{2}\right) -Lr^{2}\big)\left| \zeta_{m}\right|^{2}r^{2m}
\end{align*}
where the last equality follows from the previous lemma. Since\footnote{It is also possible to verify that $\sum_{m=0}^{\infty} a_{m,L} \big(m (1-r^{2}) -Lr^{2}\big) r^{2m}=0$ directly.} $\ex{n_{L}\left(r;1\right)}=0$ and $\ex{|\z_m|^2}=1$ the result for $L>0$ follows. The case $L=0$ is similar and omitted.
\end{proof}

\subsection{The second chaos dominates}
In this section we show that the main contribution to $n_L(r)$ comes from $n_L(r;1)$, the projection to the second chaos. (Recall that the odd chaoses vanish and $n_L(r;\a)$ denotes the $2\a$-th chaos.)
\begin{prop}\label{prop: var first chaos}
If $0\le L\le\frac{1}{2}$ is fixed then
\[
\ex{n_{L}\left(r;1\right)^{2}} \sim\var{n_{L}\left(r\right)}
\]
as $r\to1$. Moreover, this asymptotic also holds as $L\to\frac{1}{2}$, $r\to1$ in an arbitrary way.
\end{prop}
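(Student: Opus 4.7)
The plan is to exploit the orthogonality of the Wiener chaoses: since $\var{n_L(r)} = \sum_{\alpha \ge 1} \ex{n_L(r;\alpha)^2}$ and all summands are non-negative, it is enough to prove that $\ex{n_L(r;1)^2}$ already realises the asymptotic size of the variance. The latter is known from Section~\ref{sec: var comput} (for $L=0$) and from \cite{B} (for $L>0$), so I would compute $\ex{n_L(r;1)^2}$ explicitly using Proposition~\ref{prop: first chaos} and match constants.

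For $0<L<\tfrac12$ fixed this is clean. Proposition~\ref{prop: first chaos} gives
\[
\ex{n_L(r;1)^2} = (1-r^2)^{2L-2}\sum_{m=0}^\infty a_{m,L}^2\bigl(m(1-r^2)-Lr^2\bigr)^2 r^{4m}.
\]
Since $u\mapsto(u+L)^2 e^{-2u}$ is bounded on $[0,\infty)$ and $r^{4m}\le e^{-2m(1-r^2)}$, each summand is at most $C_L a_{m,L}^2$ uniformly in $r$; dominated convergence (using $\sum a_{m,L}^2 = 1/c_L^2<\infty$ for $L<\tfrac12$) gives
\[
\lim_{r\to1}\sum_m a_{m,L}^2\bigl(m(1-r^2)-Lr^2\bigr)^2 r^{4m}=L^2/c_L^2.
\]
Combined with $(1-r^2)^{2L-2}\sim 4^{L-1}(1-r)^{2L-2}$ and $c_L^2=\Gamma(1-L)^2/\Gamma(1-2L)$, this reproduces exactly the variance asymptotic from Section~\ref{sec: var comput}. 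The case $L=0$ follows the same scheme using the separate formula in Proposition~\ref{prop: first chaos}, where the inner $\log\tfrac{1}{1-r^2}$ factor in the summand combines with the outer $\log^{-2}$ prefactor to produce the $\log^{-4}$ of Proposition~\ref{prop: var L=0}.

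The boundary and joint cases ($L=\tfrac12$ fixed, and $L\to\tfrac12$ with $r\to1$) are more delicate because $\sum_m a_{m,L}^2$ diverges. Here I would instead approximate the sum by a Laplace-type integral: substituting $u=2m(1-r)$ and using $a_{m,L}^2\sim m^{2L-2}/\Gamma(L)^2$ (Stirling), the sum becomes, to leading order,
\[
\frac{(2(1-r))^{1-2L}}{\Gamma(L)^2}\int_{c(1-r)}^\infty u^{2L-2}(u-L)^2 e^{-2u}\,du,
\]
where the lower cutoff reflects the discreteness of the sum near $m=1$. At $L=\tfrac12$ the integrand behaves like $1/(4u)$ near $0$ and the truncated integral contributes $\tfrac14\log\tfrac1{1-r}(1+o(1))$, which combined with the $(1-r^2)^{-1}$ prefactor recovers the variance asymptotic from \eqref{eq: var est}. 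Tracking the error terms in this Laplace approximation uniformly over $L$ in a neighbourhood of $\tfrac12$ yields the joint statement.

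The main obstacle is this uniformity: the variance undergoes a transition at $L=\tfrac12$ from $(1-r)^{2L-2}$ to $(1-r)^{-1}\log\frac{1}{1-r}$, and the prefactor $L^2/c_L^2\propto\Gamma(1-2L)^{-1}$ blows up at exactly the rate needed to produce the logarithm. A more robust (if less informative) alternative is to bound $\sum_{\alpha\ge2}\ex{n_L(r;\alpha)^2}$ directly from Lemma~\ref{lem: 2nd/4th moment chaos} by splitting the $\theta$-integral at $|\theta|\sim 1-r$; one obtains $\ex{n_L(r;\alpha)^2}\lesssim \alpha^{-3/2}(1-r)^{-1}$ when $\alpha L>\tfrac12$ and $\ex{n_L(r;\alpha)^2}\lesssim(1-r)^{2\alpha L-2}$ when $\alpha L<\tfrac12$, and these bounds are summable in $\alpha\ge2$ and of strictly lower order than $\var{n_L(r)}$ uniformly in $L$.
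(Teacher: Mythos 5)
Your overall strategy coincides with the paper's: compute $\ex{n_L(r;1)^2}$ from the explicit formula of Proposition~\ref{prop: first chaos}, match it against the known variance asymptotics from \cite{B} and Proposition~\ref{prop: var L=0}, and use orthogonality of the chaoses. For fixed $0\le L<\frac12$ your dominated convergence argument is correct and is in substance what the paper does (for fixed $L$ its Lemma~\ref{lem: hypgeom asym} is just the Abelian statement $\sum_m a_{m,L}^2x^m\to\sum_m a_{m,L}^2$), and your constants do match \eqref{eq: asym for var} via the duplication formula.

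The gap is in the ``moreover'' clause. Your Laplace-integral sketch for $L\to\frac12$ ends with ``tracking the error terms \dots uniformly over $L$'', which is precisely the content of the claim: the whole difficulty is trading off the blow-up of $\Gamma(1-2L)$ against the emerging logarithm uniformly along an arbitrary path $(L,r)\to(\frac12,1)$. The paper does exactly this in Lemma~\ref{lem: hypgeom asym}, writing $\sum_m a_{m,L}^2x^m$ as Gauss's integral $\frac{1}{\Gamma(L)\Gamma(1-L)}\int_0^1t^{L-1}(1-t)^{-L}(1-xt)^{-L}\,\dd t$ and splitting at $1-\e$ with $\e=(1-x)\log\frac1{1-x}$; without some such uniform device your treatment of the joint limit is incomplete. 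Your proposed fallback---bounding $\sum_{\a\ge2}\ex{n_L(r;\a)^2}$ via Lemmas~\ref{lem: 2nd/4th moment chaos} and~\ref{lem: bound integral 2nd moment}---would in fact rescue the joint-limit and $L=\frac12$ cases, since there $\a L>\frac12$ for every $\a\ge2$ and $\sum_{\a\ge2}\a^{-3/2}(1-r)^{-1}\lesssim(1-r)^{-1}=o\left(\var{n_{L}(r)}\right)$; note that orthogonality then yields $\ex{n_L(r;1)^2}=\var{n_L(r)}(1+o(1))$ with no need to evaluate the first chaos at all. But the blanket assertion that these bounds are of lower order ``uniformly in $L$'' is false at $L=0$: there $\a L=0<\frac12$ for every $\a$, the kernel decays only logarithmically, and your bound $(1-r)^{2\a L-2}=(1-r)^{-2}$ is neither summable in $\a$ nor $o\left(\var{n_{0}(r)}\right)\simeq(1-r)^{-2}\left(\log\frac1{1-r}\right)^{-4}$. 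Since $L=0$ is a fixed case already covered by your main argument, the proof assembles correctly provided you restrict the fallback to $L$ bounded away from $0$ and either carry out the uniform Laplace estimate or adopt the paper's hypergeometric lemma for the joint limit.
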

\begin{rmks}\phantom{w}
\begin{enumerate}
    \item From \cite{B} we have
    \begin{equation}
    \var{n_{L}\left(r\right)}\sim
    \begin{cases}
    \frac{L^{2}\Gamma\left(\frac{1}{2}-L\right)} {4\sqrt{\pi}\Gamma\left(1-L\right)} \left(1-r\right)^{2L-2}, & 0<L<\frac{1}{2};\\
    \frac{1}{8\pi\left(1-2L\right)} \left(1-r\right)^{2L-2} \big(1-\left(1-x\right)^{1-2L}\big), & L\to\frac{1}{2},\, L \ne \frac12 ;\\
    \frac{1}{8\pi} \frac{1}{1-r} \log\frac{1}{1-r}, & L=\frac{1}{2}.
    \end{cases}\label{eq: asym for var}
    \end{equation}
    The case $L=0$ is given in Proposition~\ref{prop: var L=0}.
    
    \item Indeed it is possible to mimic the proofs given in \cite{B} to prove Proposition~\ref{prop: var first chaos}, but we give a different one for variety. Moreover this proof will boil down to proving estimates that will be necessary for the proof of Theorem~\ref{thm: main}.
\end{enumerate}
\end{rmks}
We begin with a useful lemma. We put
\begin{equation*}
    \psi_{L}\left(x\right)=
    \begin{cases} \frac{1-\left(1-x\right)^{1-2L}}{\pi\left(1-2L\right)}, & \textrm{for }L\neq\frac{1}{2};\\
    \frac{1}{\pi}\log\frac{1}{1-x}, & \textrm{for }L=\frac{1}{2}.
    \end{cases}
\end{equation*}
\begin{lem}
\label{lem: hypgeom asym}If $0<L<\frac{1}{2}$ is fixed then
\begin{equation}
\sum_{m=0}^{\infty}a_{m,L}^{2}x^{m}\xrightarrow{x\to1^-}\frac{\Gamma\left(1-2L\right)}{\Gamma\left(1-L\right)^{2}}=\frac{\Gamma\left(\frac{1}{2}-L\right)}{4^{L}\sqrt{\pi}\Gamma\left(1-L\right)}.\label{eq:hyp fixed}
\end{equation}
If $x\to1^-$ and $L\to\frac{1}{2}$ then
\[
\sum_{m=0}^{\infty}a_{m,L}^{2}x^{m}\sim\psi_{L}\left(x\right).
\]

\end{lem}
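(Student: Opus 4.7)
My plan is to identify the series as the Gauss hypergeometric function ${}_2F_1(L,L;1;x)$, since $a_{m,L}=(L)_m/m!$ in Pochhammer notation. Both parts of the lemma then reduce to classical manipulations of ${}_2F_1$ near its singularity at $x=1$.

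For fixed $L\in(0,\tfrac12)$, I would apply Gauss's summation theorem: since $c-a-b=1-2L>0$, we have ${}_2F_1(L,L;1;1) = \Gamma(1-2L)/\Gamma(1-L)^2$, and monotone convergence of the non-negative series justifies passing to the limit as $x\to 1^-$. The equivalent form with $\Gamma(\tfrac12-L)$ then follows directly from the Legendre duplication formula $\Gamma(z)\Gamma(z+\tfrac12) = 2^{1-2z}\sqrt\pi\,\Gamma(2z)$ at $z=\tfrac12-L$.

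For the joint limit, the plan is to invoke the classical connection formula at $x=1$ (valid for $L\ne\tfrac12$),
$${}_2F_1(L,L;1;x) = \frac{\Gamma(1-2L)}{\Gamma(1-L)^2}\,{}_2F_1(L,L;2L;1-x) + \frac{\Gamma(2L-1)}{\Gamma(L)^2}(1-x)^{1-2L}\,{}_2F_1(1-L,1-L;2-2L;1-x),$$
in which each ${}_2F_1$ on the right equals $1$ at $x=1$. Setting $\varepsilon=1-2L$, the Gamma prefactors develop opposite simple poles at $L=\tfrac12$: the expansions $\Gamma(\pm\varepsilon)=\pm 1/\varepsilon+O(1)$ and $\Gamma(L)^2,\Gamma(1-L)^2\to\pi$ give $\Gamma(1-2L)/\Gamma(1-L)^2 = 1/(\pi(1-2L)) + O(1)$ and $\Gamma(2L-1)/\Gamma(L)^2 = -1/(\pi(1-2L)) + O(1)$. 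Their leading orders combine to $(1-(1-x)^{1-2L})/(\pi(1-2L)) = \psi_L(x)$, which tends to $\infty$ in the joint limit.

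The hard part will be uniform control of the error in the joint limit. Naively, the first-order Taylor corrections of each ${}_2F_1$ in $1-x$ (with coefficients $L/2$ and $(1-L)/2$), after multiplication by the $1/(1-2L)$ poles, would produce contributions of size $(1-x)/(1-2L)$, which is not automatically $o(\psi_L(x))$ in every sub-regime of the joint limit. The saving grace is a cancellation at $L=\tfrac12$: the bracket $L(1-x)-(1-L)(1-x)^{2-2L}$ vanishes identically there, and Taylor-expanding in $\delta=L-\tfrac12$ using $(1-x)^{1-2L}=e^{-2\delta\log(1/(1-x))}$ shows that this correction is in fact of order $(1-x)\log(1/(1-x))\to 0$, hence $o(\psi_L(x))$. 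Higher-order Taylor corrections are controlled by the same cancellation mechanism combined with the analyticity of the involved series in a neighbourhood of $L=\tfrac12$ and $1-x=0$. The case $L=\tfrac12$ itself I would treat separately via the classical asymptotic ${}_2F_1(\tfrac12,\tfrac12;1;x)\sim(1/\pi)\log(1/(1-x))$ (equivalent to the known behaviour of the complete elliptic integral near modulus one), which coincides with $\psi_{1/2}(x)$.
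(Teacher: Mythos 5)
Your proposal is correct, and for the substantive (joint-limit) part it follows a genuinely different route from the paper. The paper also works with the hypergeometric identification, but entirely through Gauss's Euler integral representation $\frac{1}{\Gamma(L)\Gamma(1-L)}\int_0^1 t^{L-1}(1-t)^{-L}(1-xt)^{-L}\,\mathrm{d}t$: for fixed $L$ it evaluates the limit as a Beta integral (the same computation that underlies Gauss's summation theorem, which you invoke directly), and for $L\to\frac12^-$ it splits the integral at $t=1-\e$ with $\e=(1-x)\log\frac1{1-x}$, showing the main range is asymptotic to $\frac{1-(1-x)^{1-2L}}{1-2L}$ and the tail is negligible; the cases $L=\frac12$ and $L\to\frac12^+$ are then handled separately via Stirling asymptotics of the coefficients and a term-by-term comparison with the binomial series of $\frac{1-(1-x)^{1-2L}}{1-2L}$. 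Your route via the connection formula at $x=1$, with the two Gamma prefactors developing opposite simple poles whose leading parts reassemble into $\psi_L(x)$, is conceptually cleaner and treats both sides of $L=\frac12$ at once (with $L=\frac12$ itself done via the elliptic-integral asymptotic); the price is that all the difficulty is pushed into uniform control of the remainder in the joint limit, which you correctly identify as the delicate point, and the cancellation of $L(1-x)-(1-L)(1-x)^{2-2L}$ at $L=\frac12$ is exactly the right mechanism.

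The one place that needs firming up is your appeal to ``the same cancellation mechanism'' for the higher-order Taylor corrections. A concrete way to close it: with $c_k(L)=\frac{(L)_k^2}{(2L)_k\,k!}$, the full remainder is, up to the bounded parts of the prefactors,
\[
\frac{1}{\pi(1-2L)}\sum_{k\ge1}(1-x)^k\Bigl(c_k(L)-(1-x)^{1-2L}c_k(1-L)\Bigr),
\]
and each summand splits as $\bigl(c_k(L)-c_k(1-L)\bigr)+\bigl(1-(1-x)^{1-2L}\bigr)c_k(1-L)$. Logarithmic differentiation gives $\bigl|\tfrac{\mathrm{d}}{\mathrm{d}L}\log c_k(L)\bigr|\le C$ and $c_k(L)\le C/k$ uniformly for $L$ near $\frac12$, so the first piece contributes $O\bigl(|1-2L|(1-x)\bigr)$ after summation, hence $O(1-x)=o(\psi_L(x))$ upon dividing by $\pi|1-2L|$ (recall $\psi_L(x)\ge\frac1\pi\log\frac1{1-x}\to\infty$). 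Since $\bigl|1-(1-x)^{1-2L}\bigr|=\pi|1-2L|\,\psi_L(x)$ and $\sum_{k\ge1}c_k(1-L)(1-x)^k=O(1-x)$, the second piece contributes $O\bigl((1-x)\psi_L(x)\bigr)=o(\psi_L(x))$. With that estimate in place your argument is complete.
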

\begin{proof}
For $0<L<\frac{1}{2}$ and $0<x<1$ we have\footnote{In fact this is Gauss's integral representation for the hypergeometric function $_{2}F_{1}\left[L,L;1;x\right]$}
\[
\sum_{m=0}^{\infty} a_{m,L}^{2}x^{m} = \frac{1}{\Gamma\left(L\right)\Gamma\left(1-L\right)} \integlim {0} {1} {t^{L-1}\left(1-t\right)^{-L}\left(1-xt\right)^{-L}} {t},
\]
which is easily verified by expanding the term $\left(1-xt\right)^{-L}$
as a power series in $xt$. Since both sides are convergent when $x=1$
we have
\[
\sum_{m=0}^{\infty} a_{m,L}^{2} x^{m} \overset{x\to1}{\sim} \sum_{m=0}^{\infty} a_{m,L}^{2} = \frac{1}{\Gamma\left(L\right)\Gamma\left(1-L\right)} \integlim {0} {1} {t^{L-1}\left(1-t\right)^{-2L}} {t} = \frac{B\left(L,1-2L\right)} {\Gamma\left(L\right)\Gamma\left(1-L\right)} = \frac{\Gamma\left(1-2L\right)}{\Gamma\left(1-L\right)^{2}}
\]
where $B$ is the beta function. Applying the identity
\[
\Gamma\left(z\right)\Gamma\left(z+\frac{1}{2}\right)=2^{1-2z}\sqrt{\pi}\Gamma\left(2z\right)
\]
yields \eqref{eq:hyp fixed}.

Now suppose that $x\to1^-$ and $L\to\frac{1}{2}^-$ simultaneously and write
$\e=\e \left(x\right)=\left(1-x\right)\log\frac{1}{1-x}$. For $0<t<1-\e$
we have
\[
\left(1-xt\right)^{-L}=\left(1-t\right)^{-L}\left(1+\frac{\left(1-x\right)t}{1-t}\right)^{-L}\sim\left(1-t\right)^{-L}
\]
uniformly, which yields
\begin{align*}
\integlim {0} {1-\epsilon} {t^{L-1} (1-t)^{-L} (1-xt)^{-L}} {t} & \sim \integlim {0} {1-\e} {t^{L-1} (1-t)^{-2L}} {t}\\
& =B\left(L,1-2L\right) - \integlim {1-\e} {1} {t^{L-1} (1-t)^{-2L}} {t}\\
& \sim \frac{1-\e^{1-2L}} {1-2L} \sim \frac{1-(1-x)^{1-2L}} {1-2L}.
\end{align*}
Further
\begin{align*}
\integlim {1-\e} {1} {t^{L-1} (1-t)^{-L} \left(1-xt\right)^{-L}} {t} & \le (1-x)^{-L} \integlim{1-\e} {1} {t^{L-1}\left(1-t\right)^{-L}} {t}\\
& \lesssim \left(1-x\right)^{-L} \integlim {0} {\epsilon} {s^{-L}} {s}\\
& =\left(1-x\right)^{1-2L}\left(\log\frac{1}{1-x}\right)^{1-L}=o\left(\frac{1-\left(1-x\right)^{1-2L}}{1-2L}\right).
\end{align*}
Since $\Gamma\left(\frac{1}{2}\right)=\sqrt{\pi}$ we have $\Gamma\left(L\right)\Gamma\left(1-L\right)\to\pi$ and the result follows in this case.

For $L\ge\frac{1}{2}$ we use the asymptotic (which follows from Stirling's bounds)
\[
\frac{\Gamma\left(L+m\right)}{m!}=m^{L-1}+O\left(m^{L-2}\right)
\]
for $m\ge1$, and the implicit constant is uniform for $L$ bounded away from $0$ and $\infty$. For $L=\frac{1}{2}$ we get
\[
\sum_{m=0}^{\infty}a_{m,L}^{2}x^{m}=1+\frac{1}{\pi}\sum_{m=1}^{\infty}\left(\frac{1}{m}+O\left(\frac{1}{m^{2}}\right)\right)x^{m}=\frac{1}{\pi}\log\frac{1}{1-x}+O\left(1\right).
\]
For $L\to\frac{1}{2}^+$ we see that
\[
\sum_{m=0}^{\infty}a_{m,L}^{2}x^{m}=1+\frac{1}{\Gamma(L)^{2}}\sum_{m=1}^{\infty}\left(\frac{1}{m^{2-2L}}+O\left(\frac{1}{m^{3-2L}}\right)\right)x^{m} \sim \frac{1}{\pi}\sum_{m=1}^{\infty}\frac{x^{m}}{m^{2-2L}}.
\]
On the other hand
\[
\frac{1-\left(1-x\right)^{1-2L}}{1-2L}=\frac{1}{\left(2L-1\right)\Gamma\left(2L-1\right)}\sum_{m=1}^{\infty}\frac{\Gamma\left(2L-1+m\right)}{m!}x^{m}\sim\sum_{m=1}^{\infty}\frac{x^{m}}{m^{2-2L}}.
\]
\end{proof}

\begin{proof}[Proof of Proposition~\ref{prop: var first chaos}]
Since the random variables $\left|\zeta_{m}\right|^{2}-1$ are orthonormal we get, for $L>0$,
\[
\ex{n_{L}\left(r;1\right)^2}= \left(1-r^{2}\right)^{2L-2} \sum_{m=0}^{\infty} a_{m,L}^{2} \big(m\left(1-r^{2}\right)-Lr^{2}\big)^{2}r^{4m}.
\]
We expand  the term $\big(m\left(1-r^{2}\right)-Lr^{2}\big)^{2}$ and estimate, bearing in mind \eqref{eq: asym for var}. From Lemma~\ref{lem: hypgeom asym} we see that
\[
L^{2}r^4 \left(1-r^{2}\right)^{2L-2} \sum_{m=0}^{\infty} a_{m,L}^{2} r^{4m} \sim\var{n_{L}\left(r\right)}
\]
for $0<L\le\frac{1}{2}$ and for $L\to\frac{1}{2}$. We also have, for $L>0$,
\[
(1-r^{2})^{2L} \sum_{m=0}^{\infty} a_{m,L}^{2} m^{2} r^{4m} \simeq (1-r)^{2L} \sum_{m=1}^{\infty}m^{2L}r^{4m} \simeq(1-r)^{-1} 
\]
and this is all uniform when $L$ is close to $\frac12$. Similarly
\[
Lr^{2}(1-r^{2})^{2L-1} \sum_{m=0}^{\infty}a_{m,L}^{2}mr^{4m} \simeq(1-r)^{2L-1} \sum_{m=1}^{\infty} m^{2L-1}r^{4m} \simeq(1-r)^{-1}
\]
and furthermore $(1-r)^{-1}=o\left(\var{n_{L}\left(r\right)}\right)$ if $0<L\le\frac12$ or $L\to\frac12$.

Similarly
\begin{align*}
\ex{n_{0}\left(r;1\right)^{2}} & = \frac{1}{\left(1-r^{2}\right)^{2} \big(\log\frac{1}{1-r}\big)^{4}} \sum_{m=1}^{\infty}\left(\left(1-r^{2}\right)\log\frac{1}{1-r^{2}}-\frac{r^{2}}{m}\right)^{2}r^{4m}\\
&\sim 
\frac{1} {4 \left(1-r\right)^{2} \big(\log\frac{1}{1-r}\big)^{4}} \sum_{m=1}^{\infty} \frac{1}{m^{2}} = \frac{\pi^{2}} {24\left(1-r\right)^{2} \big(\log\frac{1}{1-r}\big)^{4}}
\end{align*}
as claimed.
\end{proof}

\subsection{Completing the proof of Theorem~\ref{thm: main}}

\begin{proof}[Proof of Theorem~\ref{thm: main}~\eqref{part: 0<L<half}]
We begin with the case $0<L<\frac12$. By Proposition~\ref{prop: var first chaos} it is enough to see that $\frac{n_{L}\left(r;1\right)} {\var{n_{L}\left(r\right)} } \to -c_{L}X_{L}$
in $L^2$. Using the estimate \eqref{eq: asym for var}, the alternative expression for $c_L$ given by \eqref{eq:hyp fixed} and Proposition~\ref{prop: first chaos}, we see that it is  enough
to show that
\[
\frac{1}{L}\sum_{m=0}^{\infty} a_{m,L} \left(m\left(1-r^{2}\right)-Lr^{2}\right) \left(\left|\zeta_{m}\right|^{2}-1\right) r^{2m} \xrightarrow{L^2} -X_{L}.
\]
But we have
\[
\ex{\bigg(\sum_{m=0}^{\infty} a_{m,L} m\left(1-r^{2}\right) \left(\left|\zeta_{m}\right|^{2}-1\right) r^{2m}\bigg)^{2} } = \left(1-r^{2}\right)^{2} \sum_{m=0}^{\infty} a_{m,L}^{2} m^{2} r^{4m} \simeq\left(1-r\right)^{1-2L} =o\left(1\right)
\]
as before, and so we need to show that
\[
-r^{2} \sum_{m=0}^{\infty} a_{m,L} \left(\left|\zeta_{m}\right|^{2}-1\right) r^{2m} \xrightarrow{L^2} -X_{L}.
\]
This will follow if we show that
\[
\sum_{m=0}^{\infty} a_{m,L}^{2} \left(r^{2m+2}-1\right)^{2}\to0,
\]
but this is obvious since $\sum_{m=0}^{\infty}a_{m,L}^{2}<+\infty$.

Next we treat the case $L=0$. We need to see that
\[
\sum_{m=1}^{\infty} a_{m,0} \left(m\left(1-r^{2}\right) \log\frac{1}{1-r^{2}}-r^{2}\right) \left(\left|\zeta_{m}\right|^{2}-1\right) r^{2m} \xrightarrow{L^2} -X_{0}.
\]
Similar to before we see that
\begin{align*}
\ex{\left(\sum_{m=1}^{\infty} a_{m,0} m \left(1-r^{2}\right) \log\frac{1}{1-r^{2}} \left(\left|\zeta_{m}\right|^{2}-1\right) r^{2m }\right)^{2} } &=  \left(1-r^{2}\right)^{2} \Big(\log\frac{1}{1-r^{2}}\Big)^{2} \sum_{m=1}^{\infty} r^{4m}\\
&\simeq  \left(1-r\right) \Big(\log\frac{1}{1-r^{2}}\Big)^{2} =o\left(1\right)
\end{align*}
and that
\[
\sum_{m=1}^{\infty} a_{m,0}^{2} \left(r^{2m +2}-1\right)^{2} \to0.\qedhere
\]
\end{proof}
Finally we treat the case $L\to\frac12$, which of course includes the case $L=\frac{1}{2}$, and so completes the proof of Theorem~\ref{thm: main}. We first state a convenient lemma.

\begin{lem}
\label{lem: 4th mom sum}Let $\zeta_{m}$ be a sequence of iid $\mathcal{N}_{\mathbb{C}}(0,1)$
random variables as before and $\alpha_{m}$ be square-summable real
coefficients. Then
\[
S=\sum_{m=0}^{\infty}\alpha_{m}(|\zeta_{m}|^2-1)
\]
is almost surely convergent, has mean $0$, and satisfies
\[
\E\left[S^{2}\right]=\sum_{m=0}^{\infty}\alpha_{m}^{2}\quad\text{and}\quad
\E\left[S^{4}\right]=3\left(\sum_{m=0}^{\infty}\alpha_{m}^{2}\right)^{2}+6\sum_{m=0}^{\infty}\alpha_{m}^{4}.
\]
\end{lem}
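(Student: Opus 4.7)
The plan is to reduce everything to elementary moment calculations for the iid mean-zero real random variables $X_m := |\zeta_m|^2 - 1$. Since $|\zeta_m|^2$ is a standard exponential variable, the moments $\E[|\zeta_m|^{2k}] = k!$ are immediate, so a direct expansion yields $\E[X_m] = 0$, $\E[X_m^2] = 1$, and $\E[X_m^4] = 24 - 24 + 12 - 4 + 1 = 9$; these are the only numerical inputs needed.

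For almost sure convergence and the first two moment identities, I would consider the partial sums $S_N := \sum_{m=0}^N \alpha_m X_m$. By independence and orthogonality, $\E[S_N^2] = \sum_{m\le N}\alpha_m^2$, uniformly bounded by hypothesis. The Khintchine--Kolmogorov theorem for sums of independent mean-zero square-integrable variables then guarantees almost-sure (and $L^2$) convergence of $S_N$ to some $S$, whence $\E[S] = 0$ and $\E[S^2] = \sum_{m\ge 0}\alpha_m^2$ follow by passing to the limit in $L^2$.

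The fourth moment is a combinatorial expansion. Writing
\[
\E[S_N^4] = \sum_{i,j,k,l \le N} \alpha_i \alpha_j \alpha_k \alpha_l\, \E[X_i X_j X_k X_l],
\]
independence and $\E[X_m]=0$ force each index in a surviving term to appear at least twice. This leaves exactly two kinds of contributions: the diagonal terms where all four indices coincide, contributing $\E[X_m^4]\sum_m \alpha_m^4 = 9\sum_m \alpha_m^4$; and the terms arising from partitioning $\{1,2,3,4\}$ into two pairs (of which there are three), contributing $3 \sum_{m \ne n} \alpha_m^2 \alpha_n^2 = 3\bigl(\sum_m \alpha_m^2\bigr)^2 - 3\sum_m \alpha_m^4$. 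Summing gives the claimed identity at the level of $S_N$. Since $\sum \alpha_m^2 < \infty$ forces $\alpha_m \to 0$ and hence $\sum \alpha_m^4 < \infty$, this expression is uniformly bounded in $N$.

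The only mildly delicate point is taking $N\to\infty$ in the fourth-moment identity. Combined with the already-established almost-sure convergence $S_N \to S$, the uniform $L^4$ bound above yields uniform integrability of $\{S_N^4\}$, so Vitali's theorem gives $\E[S^4] = \lim_N \E[S_N^4]$, completing the proof. There is no real obstacle; the work is just the combinatorial bookkeeping and the standard convergence argument.
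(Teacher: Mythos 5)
Your argument is correct in substance and follows essentially the same route as the paper: almost sure convergence from Kolmogorov's theorem for sums of independent mean-zero square-integrable variables, the second moment from orthogonality, and the fourth moment from exactly the same combinatorial count (diagonal terms weighted by $\E[(|\zeta_m|^2-1)^4]=9$, plus the three pair-partitions contributing $3\bigl(\sum_m\alpha_m^2\bigr)^2-3\sum_m\alpha_m^4$). The one slip is in your final limiting step: a uniform bound on $\E[S_N^4]$ does \emph{not} by itself give uniform integrability of $\{S_N^4\}$ --- boundedness in $L^4$ yields uniform integrability of $\{|S_N|^p\}$ only for $p<4$. The repair is immediate and uses only what you already have: by Fatou's lemma applied along $S_M\to S$ a.s.\ and your finite-sum identity for $S_M-S_N$, one gets $\E[(S-S_N)^4]\le 3\bigl(\sum_{m>N}\alpha_m^2\bigr)^2+6\sum_{m>N}\alpha_m^4\to0$, so $S_N\to S$ in $L^4$ and $\E[S_N^4]\to\E[S^4]$; alternatively, bound a moment of order strictly greater than $4$. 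The paper itself expands $\E[S^4]$ over the infinite index set directly without comment, so your partial-sum version is, apart from this one point, the more careful of the two.
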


\begin{rmk}
The Gaussianity plays no role here, we may replace $|\zeta_{m}|^2-1$ by any iid real-valued random variables with the same mean, variance and fourth moment. 
\end{rmk}

\begin{proof}[Proof of Theorem~\ref{thm: main}~\eqref{part: L to half} and the case $L=\frac12$]
As before, by Proposition~\ref{prop: var first chaos} it is enough
to see that $\frac{n_{L}\left(r;1\right)}{\sqrt{\E\left[n_{L}\left(r;1\right)^{2}\right]}}$
is asymptotically normal. By the Fourth Moment Theorem \cite{fourth mom}*{Theorem 1}
this is equivalent to
\[
\frac{\E\left[n_{L}\left(r;1\right)^{4}\right]}{\E\left[n_{L}\left(r;1\right)^{2}\right]^{2}}\to3.
\]
Combining Lemma~\ref{lem: 4th mom sum}, Proposition~\ref{prop: first chaos}
and the estimate \eqref{eq: asym for var} we see that it is enough to
show that
\[
\sum_{m=0}^{\infty}a_{m,L}^{4}\left(m\left(1-r^{2}\right)-Lr^{2}\right)^{4}r^{8m}=o\left(\psi_{L}(r)^{2}\right).
\]
In fact we will show that
\[
\sum_{m=0}^{\infty}a_{m,L}^{4}\left(m\left(1-r^{2}\right)-Lr^{2}\right)^{4}r^{8m}=O\left(1\right).
\]
First note that for $1\le k\le4$ we have
\[
\left(1-r\right)^{k}\sum_{m=0}^{\infty}a_{m,L}^{4}m^{k}r^{8m}\simeq\left(1-r\right)^{k}\sum_{m=0}^{\infty}m^{4L-4+k}r^{8m}\simeq\left(1-r\right)^{3-4L}=o\left(1\right)
\]
and
\[
L^{4}\sum_{m=0}^{\infty}a_{m,L}^{4}r^{8m}\lesssim L^{4}\sum_{m=0}^{\infty}m^{4L-4}=O\left(1\right)
\]
for $L<\frac{3}{4}$. This completes the proof.
\end{proof}

\begin{proof}[Proof of Lemma~\ref{lem: 4th mom sum}]
To shorten the expressions we write $\wikk{\z}{}=|\z|^2-1$. The fact that $S$ is convergent is an obvious consequence, e.g., of Kolmogorov's Three Series Theorem, and since $\ex{\wikk{\z_{m}}{}}=0$, it must have mean $0$ also. Since the sequence $\wikk{\z_{m}}{}$ is orthogonal in $L^2$ we see that
\[
\E\left[S^{2}\right]= \sum_{m,m'=0}^{\infty} \alpha_{m} \alpha_{m'} \ex{\wikk{\z_{m}}{}\,\wikk{\z_{m'}}{}} =\sum_{m=0}^{\infty}\alpha_{m}^{2}.
\]
Finally note that a straightforward computation gives
\[
\ex{\wikk{\z_{m_1}}{}\,\wikk{\z_{m_2}}{}\,\wikk{\z_{m_3}}{}\, \wikk{\z_{m_4}}{}} = 
\begin{cases}
9, & \text{if }m_{1}=m_{2}=m_{3}=m_{4};\\
1, & \text{if }\{m_{1},m_{2},m_{3},m_{4}\}=\{m,m'\}\text{ where }m\neq m';\\
0, & \text{otherwise.}
\end{cases}
\]
This yields
\begin{align*}
\E\left[S^{4}\right] & =\sum_{m_{1},m_{2},m_{3},m_{4}=0}^{\infty} \alpha_{m_{1}} \alpha_{m_{2}} \alpha_{m_{3}} \alpha_{m_{4}} \ex{\wikk{\z_{m_1}}{}\, \wikk{\z_{m_2}}{}\, \wikk{\z_{m_3}}{}\, \wikk{\z_{m_4}}{}}\\
 & =6\left(\sum_{m=0}^{\infty}\sum_{m'<m}\alpha_{m}^{2}\alpha_{m'}^{2}\right)+9\sum_{m=0}^{\infty}\alpha_{m}^{4} =3\left(\sum_{m=0}^{\infty}\sum_{m'\ne m}\alpha_{m}^{2}\alpha_{m'}^{2}\right)+9\sum_{m=0}^{\infty}\alpha_{m}^{4}\\
 & =3\left(\sum_{m=0}^{\infty}\alpha_{m}^{2}\right)^{2}+6\sum_{m=0}^{\infty}\alpha_{m}^{4}.\qedhere
\end{align*}
\end{proof}

\subsection{A comment on the case \texorpdfstring{$L\to0^+$}{L0}} Theorem~\ref{thm: main} does not cover the case $L\to0^+$. We believe that the behaviour should be as follows, although the computations needed to prove it by our methods appear formidable. For $x\ge0$ we define
\begin{equation*}
    \Phi_{\eta}\left(x\right)=
    \begin{cases}
    1-x^2, & \eta=0,\\
    \frac{\sqrt{1-\eta^{2}}}{\eta}\left(\frac{e^{-\frac{\eta}{1-\eta}x^2}}{1-\eta}-1\right), & 0<\eta<1;\\
    0 & \eta=1.
\end{cases}
\end{equation*}
If $L\to0$, $r\to1$, $\frac{L}{1-r}\to\infty$ and $\left(1-r\right)^{L}\to\eta$ then we should have $\hat{n}_{L}(r)\to\Phi_{\eta}\left(\abs{\zeta_{0}}\right)$ in $L^2$. 

Let us explain the importance of $\eta$. If $m\ge1$ is fixed then $a_{m,L}\sim\frac{L}{m}$ as $L\to0$, which means that, almost surely, $\frac{1}{\sqrt{L}}\left(f_{L}-\zeta_{0}\right)\to f_{0}$ locally uniformly. Heuristically $f_L(z)=0$ corresponds to $f_0(z)=-\z_0/\sqrt L$. The typical size of $|f_0(z)|^2$ is $\log\frac{1}{1-|z|}$ and $\eta$ determines whether
$1/L$ is much smaller than, roughly the same size as, or much larger than this value.

\section{Tail asymptotics}\label{sec: tail}
We will finally prove Theorem~\ref{thm: tail}. Since the distribution of the random variable $\abs{\z_m}^2$ is $\Exp\left(1\right)$ whose characteristic function is $\frac{1}{1-it}$ we can calculate (recall \eqref{eq: amL def})
\begin{equation*}
\varphi_{X_{L}}\left(t\right) =\ex{e^{it X_{L}}}=\prod_{m=0}^{\infty}\ex{e^{\left(it a_{m,L}\left(\Abs{\z_m}^2-1\right)\right)}}=\prod_{m=0}^{\infty}\frac{e^{-ita_{m,L}}}{1-ia_{m,L}t}.
\end{equation*}
If $L=0$ then we recognise Weierstrass's definition of the Gamma function and get
\begin{equation*}
    \varphi_{X_0}\left(t\right)=e^{-i\g_e t}\G(1-it)
\end{equation*}
which yields Theorem~\ref{thm: tail}~\eqref{thm: tail Gumbel part}. For the rest of this section we assume that $0<L<\frac12$ and write $\varphi_{X_{L}}\left(t\right) =1/\Psi_L(-it)$
where $\Psi_{L}$ is the entire function
\begin{equation*}
    \Psi_{L}(z)= \prod_{m=0}^{\infty} \left(1+\frac{z}{b_{m,L}}\right) e^{-z/b_{m,L}};
\end{equation*}
here
\[
b_{m,L} = \frac1{a_{m,L}}=\frac{\Gamma(L)m!}{\Gamma(L+m)}
\]
which satisfies $b_{m,L}\sim \Gamma(L) m^{1-L}$ as $m\to\infty$. This allows us to explicitly describe the tail behaviour of $X_L$.

Denote by $\mathfrak{n}_L(R)$ the zero counting function of $\Psi_L$ so that
\[
\mathfrak{n}_L(R) = \# \{ z \in \C \colon \Psi_L(z) = 0 \mbox{ and } |z| \le R \} = \# \{ m \in \N \colon b_{m,L} \le R \} \sim \left(\frac{R}{\Gamma(L)}\right)^{1/(1-L)}
\]
for fixed $L$. We put $\rho = \frac1{1-L} \in (1,2)$ and $d_\rho = \Gamma(1 - 1/\rho)^{-\rho} = \Gamma(L)^{-1/(1-L)}$, and notice that $\Psi_L$ is an entire function of order $\rho$ and genus $1$, which corresponds to the fact that
\[
\sum_{m = 1}^\infty \frac{1}{b_{m,L}^2} < \infty.
\]

\subsection{Preliminaries: growth bounds for  \texorpdfstring{$\Psi_L$}{GL}}
We will deduce Theorem~\eqref{thm: tail}~\eqref{thm: tail L>0 part} by standard techniques which require growth estimates for $\Psi_L$. The estimates that we will use are given in the following proposition.

\begin{prop}\label{prop: growth psi}\phantom{w}
\begin{rlist}
    \item \label{Psi_L_pos_axis_asymp} As $R\to\infty$
    \[
    \log \Psi_L(R) = \frac{\pi d_\rho }{\sin(\pi \rho)} R^\rho (1+o(1)).
    \]
    
    \item \label{lower_bnd_Psi_L} Fix $x\le0$. If $|y| \ge 2|x|$ is sufficiently large then
    \[
    \log|\Psi_L(x+iy)| \ge c |y|^\rho,
    \]
    where $c > 0$ is a constant that depends only on $\rho$.
\end{rlist}
\end{prop}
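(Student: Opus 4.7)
My plan is to realise $\log\Psi_L$ (respectively $\log|\Psi_L|$) as a Stieltjes integral against the zero-counting function $\mathfrak{n}_L$, integrate by parts once to absorb the linear Weierstrass correction, and then insert the sharp asymptotic $\mathfrak{n}_L(t)\sim d_\rho t^\rho$.

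For part \eqref{Psi_L_pos_axis_asymp} I will start from
\[
\log\Psi_L(R)=\int_0^\infty\bigl[\log(1+R/t)-R/t\bigr]\,d\mathfrak{n}_L(t).
\]
The boundary terms vanish because $\mathfrak{n}_L$ is supported in $[1,\infty)$ and $\mathfrak{n}_L(t)=O(t^\rho)$ with $\rho<2$, so integration by parts gives $\log\Psi_L(R)=-\int_0^\infty R^2\,\mathfrak{n}_L(t)/[t^2(t+R)]\,dt$. The rescaling $t=Rs$ together with dominated convergence on the integrable majorant $C s^{\rho-2}/(s+1)$ reduces everything to the beta-type integral $\int_0^\infty s^{\rho-2}/(s+1)\,ds=\pi/\sin(\pi(\rho-1))$, and the reflection identity $\sin(\pi(\rho-1))=-\sin(\pi\rho)$ yields the stated constant.

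For part \eqref{lower_bnd_Psi_L} I will apply the same procedure to $\log|\Psi_L(z)|^2$. Using $|1+z/t|^2=|t+z|^2/t^2$ and one integration by parts, a short algebraic simplification of $(t-x)/t^2-(t+x)/[(t+x)^2+y^2]$ yields
\[
\log|\Psi_L(x+iy)|^2 = 2\int_0^\infty \frac{t(y^2-x^2)-x|z|^2}{t^2\bigl((t+x)^2+y^2\bigr)}\,\mathfrak{n}_L(t)\,dt.
\]
The decisive observation is that under the standing hypothesis $x\le 0$, $|y|\ge 2|x|$ the numerator is pointwise non-negative, since $-x|z|^2\ge 0$ and $y^2-x^2\ge\tfrac34 y^2$. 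Discarding the $-x|z|^2$ summand, using $(t+x)^2+y^2 \simeq \max(t,|y|)^2$ (for $|x|\le|y|/2$), and the lower bound $\mathfrak{n}_L(t)\gtrsim t^\rho$ for large $t$, the integral splits into $y^{-2}\int_0^{|y|}\mathfrak{n}_L(t)/t\,dt\gtrsim d_\rho |y|^{\rho-2}/\rho$ and $\int_{|y|}^\infty \mathfrak{n}_L(t)/t^3\,dt\gtrsim d_\rho |y|^{\rho-2}/(2-\rho)$; multiplying by $2(y^2-x^2)\ge\tfrac32 y^2$ yields $\log|\Psi_L(z)|^2\gtrsim |y|^\rho$ with a constant depending only on $\rho$.

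The main obstacle is algebraic rather than analytic: one must organise the integration-by-parts computation in \eqref{lower_bnd_Psi_L} so that the cancellation between the $2\log|1+z/t|$ and $-2x/t$ pieces of the integrand is displayed in the closed form above. Once the numerator $t(y^2-x^2)-x|z|^2$ is visible, the hypothesis on $x$ and $y$ makes the integrand manifestly non-negative, and the remaining estimates reduce to elementary power-law integrals.
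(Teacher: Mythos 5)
Your proposal is correct and follows essentially the same route as the paper: both rest on the integral representation $\log|\Psi_L(z)|=-\int_0^\infty \Re\bigl(z^2/(t+z)\bigr)\,\mathfrak{n}_L(t)\,t^{-2}\,\mathrm{d}t$ and the observation that its numerator $t(y^2-x^2)-x|z|^2$ is non-negative when $x\le0$ and $|y|\ge|x|$, which is exactly the paper's expression $y^2(t-x)-x^2(t+x)$. The only difference is cosmetic: the paper cites Bingham--Goldie--Teugels (Equation (7.2.3) and Theorem 7.2.1) for part (i) and for the integral representation, whereas you rederive both by integration by parts against $\mathfrak{n}_L$ and evaluate the resulting beta-type integral directly.
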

\begin{rmk}
The estimate given in \eqref{lower_bnd_Psi_L} is rather crude, but suffices for our purposes.
\end{rmk}

\begin{proof}
Part \eqref{Psi_L_pos_axis_asymp} is immediate from \cite{regVarBook}*{Equation (7.2.3)}. To see part \eqref{lower_bnd_Psi_L} we note that, with the definitions given above, we have
\[
\log|\Psi_L(z)| = - \integlim{0}{\infty}{\Re{ \frac{z^2}{t+z} } \frac{\mathfrak{n}_L(t)}{t^2}}{t}
\]
for $\arg z \ne \pi$, by \cite{regVarBook}*{Theorem 7.2.1}. Now fix $x\le0$ and note that if $|y| \ge |x|$ then
\[
-\Re{ \frac{z^2}{t+z} } = \frac{y^2(t-x)-x^2 (t+x)}{(t+x)^2+y^2}\ge 0.
\]
Further if $t \le |y| + |x|$ and $|y| \ge 2 |x|$ then we have the lower bound
\[
\frac{y^2(t-x)-x^2 (t+x)}{(t+x)^2+y^2} \ge \frac{t(y^2-x^2)}{2y^2} \ge \frac t4.
\]
We conclude that if $|y|$ is sufficiently large then there is a constant $c = c_\rho > 0$ such that
\[
\log|\Psi_L(z)| \ge c \integlim{1}{|y|+|x|}{t^{\rho - 1}}{t} \ge c^\prime |y|^\rho.\qedhere
\]
\end{proof}

\subsection{The left tail}
We will deduce the asymptotics for the left tail from a Tauberian theorem of Kasahara.

\begin{prop}[\cite{HM}*{Lemma 3}]\label{Kasahara_left_tail}
Let $\rho\in(1,2)$ and let $Z$ be a random variable satisfying $\pr{Z<a}>0$ for any $a\in\R$, then
\[
\lim_{\lambda\to\infty} \frac{1}{\lambda^{\rho}} \log\ex{e^{-\lambda Z}} = A>0
\]
if and only if
\[
\lim_{y\to\infty} \frac{1}{y^{\rho/\left(\rho-1\right)}} \log\pr{Z<-y} = -\left(1-\frac{1}{\rho}\right) \left(\frac{1}{\rho A}\right)^{\frac{1}{\rho-1}}.
\]
\end{prop}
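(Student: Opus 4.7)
This statement is Kasahara's exponential Tauberian theorem; the authors will almost certainly just cite it from \cite{HM} (the result also appears as Theorem~4.12.9 in \cite{regVarBook}). Were I to prove it myself, I would first observe that the constant $B := (1-\tfrac{1}{\rho})(\tfrac{1}{\rho A})^{1/(\rho-1)}$ is precisely the Legendre transform of $f(\lambda) = A\lambda^{\rho}$ evaluated at $y$, divided by $y^{\rho'}$, where $\rho' := \rho/(\rho-1)$ is the H\"older conjugate: indeed $\sup_\lambda(\lambda y - A\lambda^\rho) = B y^{\rho'}$, attained at $\lambda^*(y) = (y/(A\rho))^{1/(\rho-1)}$. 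So the theorem is really saying that the cumulant generating function $\psi(\lambda) := \log\ex{e^{-\lambda Z}}$ and the rate function $I(y) := -\log\pr{Z<-y}$ are Legendre conjugates to leading order as $\lambda, y \to \infty$ along the curve $y = \psi'(\lambda)$.

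For the Abelian direction (tail $\Rightarrow$ Laplace), I would integrate by parts to write
\[
\ex{e^{-\lambda Z}} = 1 + \lambda\integlim{0}{\infty}{e^{\lambda y}\pr{Z<-y}}{y} + (\text{bounded contribution from }Z\ge 0),
\]
substitute $\pr{Z<-y} = e^{-(B+o(1))y^{\rho'}}$, and apply Laplace's method. The integrand has exponent $\lambda y - (B+o(1))y^{\rho'}$ with a unique saddle at $y^*(\lambda) = (\lambda/(B\rho'))^{\rho-1}$; at that saddle the exponent evaluates to $\lambda y^*(1 - 1/\rho') = \lambda y^*/\rho = A\lambda^{\rho}$, using the identity $B\rho' = (A\rho)^{-1/(\rho-1)}$. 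The Gaussian-type width of the saddle contributes only polynomial factors, which are $o(\lambda^{\rho})$ after taking logarithms.

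For the Tauberian direction, the upper bound on the tail is essentially automatic via Chernoff: for any $\lambda > 0$,
\[
\pr{Z<-y} \le e^{-\lambda y}\ex{e^{-\lambda Z}} \le \exp\bigl(-\lambda y + (A+\varepsilon)\lambda^{\rho}\bigr)
\]
for $\lambda$ large, and optimising over $\lambda$ (taking $\lambda = \lambda^*(y)$) yields $\log\pr{Z<-y} \le -(B-o(1))y^{\rho'}$.

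The main obstacle is the matching lower bound. My first attempt would be an exponential tilt: set $\dd\mathbb{P}_\lambda = e^{-\lambda Z - \psi(\lambda)}\dd\mathbb{P}$, so that under $\mathbb{P}_\lambda$ the variable $-Z$ has mean $\psi'(\lambda) \sim A\rho\lambda^{\rho-1} = y^*(\lambda)$ and variance $\psi''(\lambda) \sim A\rho(\rho-1)\lambda^{\rho-2}$. A Chebyshev-type concentration under $\mathbb{P}_\lambda$ would then lower-bound $\mathbb{P}_\lambda(-Z \in [y^*,y^*+C\sqrt{\psi''(\lambda)}])$ by a constant, and untilting would give $\pr{Z<-y^*(\lambda)} \ge \exp(\psi(\lambda) - \lambda y^*(\lambda) + O(\log\lambda)) = \exp(-(B+o(1))y^{*\rho'})$, the desired lower bound. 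The delicate step is extracting the second-derivative estimate $\psi''(\lambda) \sim A\rho(\rho-1)\lambda^{\rho-2}$ from the bare hypothesis $\psi(\lambda) \sim A\lambda^{\rho}$; without some regularity assumption this requires a convexity-plus-smoothing argument, and in practice one would sidestep the ad hoc work by invoking the general Tauberian theorem for regularly varying Laplace exponents (cf.\ de Bruijn, \cite{regVarBook}*{\S 4.12}), which is exactly the black-box that Kasahara's lemma packages for this application.
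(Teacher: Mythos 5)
You are right that the paper offers no proof here: the proposition is quoted directly from \cite{HM}*{Lemma 3}, with only a remark that Kasahara's original right-tail formulation is easily converted to the left-tail statement, so your opening sentence captures exactly what the authors do. Your Legendre-duality sketch is a sound outline of how the cited result is actually proved --- the constants check out, the Abelian direction and the Chernoff upper bound are routine, and you correctly identify the lower bound on the tail (where naive tilting needs second-derivative control that the bare hypothesis $\psi(\lambda)\sim A\lambda^\rho$ does not supply) as the one genuinely Tauberian step that forces recourse to the de Bruijn--Kasahara machinery --- but none of this appears in the paper.
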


\begin{rmk}
Kasahara originally stated his result for the right tail of a random variable $X$ that satisfies $\pr{X>a}>0$ for every  $a>0$ but, as noted in the remark after \cite{HM}*{Lemma 3}, this assumption is easy to remove.
\end{rmk}

By Proposition~\ref{prop: growth psi}~\eqref{Psi_L_pos_axis_asymp}
\[
\lim_{\lambda\to\infty} \frac{1}{\lambda^\rho} \log \ex{e^{- \lambda X_{L}}} = -\lim_{\lambda\to\infty} \frac{1}{\lambda^\rho} \log \Psi_L(\l)= - \frac{\pi d_\rho}{\sin(\pi \rho)}
\]
and so, by Proposition~\ref{Kasahara_left_tail} we find that

\begin{align*}
\log\pr{X_L<-y} & \sim -\left(1-\frac{1}{\rho}\right)\left(\frac{-\sin(\pi \rho)}{\pi \rho d_\rho}\right)^{\frac{1}{\rho-1}} y^{\rho/(\rho - 1)}\\
& = - L \, \Gamma(L)^{1/L} \left(-\sinc\left(\frac{\pi}{1-L}\right)\right)^{\frac{1}{L}-1} y^{1/L} ,
\end{align*}
as $y \to \infty$, where $\sinc x = \sin x / x$, which is one half of Theorem~\ref{thm: tail}~\eqref{thm: tail L>0 part}.

\subsection{The right tail}
For the right tail we first use the inversion formula to compute the density $f_{X_{L}}$ of the random variable $X_L$. We have
\[
f_{X_{L}}(x)
= \frac{1}{2\pi} \integllim{\R}{\varphi_{X_{L}} (t) e^{-ixt}}{t} 
= \frac{1}{2\pi} \integllim{\R}{\frac{1}{\Psi_L(-it)}e^{-ixt}}{t},
\]
which holds by Proposition~\ref{prop: growth psi}~\eqref{lower_bnd_Psi_L}. By the same estimate, we can make a change of contour, and thus by the residue theorem,
\[
f_{X_{L}}(x)
= \frac{1}{2\pi} \integllim{{\R - i \b}}{\psi(w)}{w} - i \,\text{Res}(\psi,-i b_{0,L}),
\quad \mbox{ where }
\psi(w) = \frac{1}{\Psi_L(-i w)}e^{-ixw},
\]
and $\b \in (b_{0,L}, b_{1,L}) = (1, L^{-1})$ is some constant (the sign of the residue is negative since the contour runs clockwise). Again by Proposition~\ref{prop: growth psi}~\eqref{lower_bnd_Psi_L},
\[
\left| \frac{1}{2\pi} \integllim{{\R - i \b}}{\psi(w)}{w} \right| \le C e^{-\b x} \integllim{\R}{e^{-c |t|^\rho}}{t} \le C e^{-\b x}.
\]
It remains to evaluate the residue. We have $\text{Res}(\psi,-i)= e^{-x} \text{Res}(\frac1{\Psi_L(-i\,\cdot\,)},-i)$ and we write
\[
\frac1{\Psi_L(-iw)} = \frac{e^{-i w}}{1-iw} \prod_{m=1}^{\infty} \frac{e^{-i w/b_{m,L}}}{1-\frac{iw}{b_{m,L}}}
\]
so that
\begin{align*}
-i \,\text{Res}(\psi,-i b_{0,L}) = -i \,\text{Res}(\psi,-i) & = -ie^{-x} \prod_{m=1}^{\infty} \frac{e^{-1/b_{m,L}}}{1-\frac{1}{b_{m,L}}} \lim_{w \to -i} e^{-i w} \frac{w+i}{1-iw} \\
& = \frac1e \prod_{m=1}^{\infty} \frac{e^{-a_{m,L}}} {1-a_{m,L}} e^{- x} = \kappa_L e^{-x}.
\end{align*}
We conclude that, as $x\to\infty$,
\[
f_{X_{L}}(x) \sim \kappa_L e^{-x}
\]
(the error term is actually exponentially small) and in particular, as $y\to\infty$,
\[
\pr{X_L > y} \sim \kappa_L e^{-y}.
\]
This completes the proof of Theorem~\ref{thm: tail}~\eqref{thm: tail L>0 part}.
\appendix
\section{Wiener Chaos expansion for number of zeroes}\label{app: chaos}
In this appendix we prove versions of Proposition~\ref{prop: Wiener expan} and Lemma~\ref{lem: 2nd/4th moment chaos}.
We return to the notation introduced in Section~\ref{sec: var comput}; we define a GAF of the form
\[
f(z) = \sum_{m\ge 0} b_m \zeta_m z^m
\]
with $b_m \ge 0$ and covariance function
\[
G(z) = \sum_{m \ge 0} b_m^2 z^m
\]
and write $\widehat{f}\left(z\right)=\frac{f(z)}{G(|z|^2)^{1/2}}$. We also write $n(r)$ for the number of zeroes of $f$ in the disc $D(0,r)$ for any $r<R_0$, where $R_0$ denotes the radius of convergence of $G$ (which is a.s. the radius of convergence of $f$, and we allow $R_0=\infty$).

We next recall the notion of the Wiener chaos. We define the $q$-th component of the Wiener chaos to be
\begin{equation*}
    \mathcal{W}^{\wik{q}} = L^2-\spn \{ \wikkk{\z_{j_1}}{\a_1 } {\z_{j_1}} {\b_1} \cdots \wikkk{\z_{j_k}}{\a_k}{\z_{j_k}}{\b_k} \mid \a_1+\b_1 + \cdots + \a_k+\b_k=q\}.
\end{equation*}
It follows from \cite{Jan}*{Theorem 3.12} that the $\mathcal{W}^{\wik{q}}$ define orthogonal subspaces of $L^2$, and from \cite{Jan}*{Theorem 2.6} we get that $L^2=\bigoplus_{q=0}^\infty \mathcal{W}^{\wik{q}}$. Given any random variable with finite second moment, we may therefore expand it in terms of its projection to each $\mathcal{W}^{\wik{q}}$, and this is known as the Wiener chaos expansion. We now state this expansion for $n(r)$.

\begin{prop}\label{prop: gen chaos expan}
Define
\begin{equation}\label{eq: expansion for n a}
    n(r;\a)=\frac{\left(-1\right)^{\a+1}}{\a\left(\a!\right)}\frac{1}{2\pi i}\integlim{\partial D\left(0,r\right)}{}{\frac{\partial}{\partial z}\wikk{\widehat{f}\left(z\right)}{\a}}{z}.
\end{equation}
Then $n(r;\a)$ belongs to the $2\a$-th component of the Wiener chaos corresponding to $f$ and 
\begin{equation*}
    n(r)-\ex{n(r)}=\sum_{\a=1}^\infty n(r;\a)
\end{equation*}
where the sum converges in $L^2$.
\end{prop}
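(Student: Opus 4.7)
My starting point is the argument principle
\[
n(r) = \frac{1}{2\pi i}\integllim{\partial D(0,r)}{\frac{\partial}{\partial z}\log|f(z)|^2}{z},
\]
combined with the Edelman--Kostlan identity, which together yield
\[
n(r) - \ex{n(r)} = \frac{1}{2\pi i}\integllim{\partial D(0,r)}{\frac{\partial}{\partial z}\log\abs{\widehat f(z)}^2}{z};
\]
this is exactly as in the heuristic explanation in Section~\ref{sec: overview}, using that $\log\abs{\widehat f(z)}^2$ and $\log|f(z)|^2$ differ by the deterministic quantity $\log G(|z|^2)$ whose $\partial_z$-contour integral recovers $\ex{n(r)}$. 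Into this contour integral I will substitute the Hermite--It\={o} expansion \eqref{eq: log expan} applied to $\widehat f(z)\sim\cN_\C(0,1)$:
\[
\log\abs{\widehat f(z)}^2 = -\g_e + \sum_{\a\ge1}\frac{(-1)^{\a+1}}{\a\,\a!}\,\wikk{\widehat f(z)}{\a},
\]
which holds in $L^2(\P)$ for every $z\in\D$. The crucial feature is that $\widehat f(z)$ is normalised to be $\cN_\C(0,1)$ \emph{regardless of $z$}, so the $L^2(\P)$-norm of $\wikk{\widehat f(z)}{\a}$ equals $\a!$ uniformly in $z$; this uniform control will drive the interchange arguments below.

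The main technical obstacle is exchanging the infinite sum with the operations $\partial/\partial z$ and $\int_{\partial D(0,r)}$. I would handle this as follows. Since $G(|z|^2)^{-\a/2}$ is a deterministic scalar, Wick multilinearity yields the identity $\wikk{\widehat f(z)}{\a} = G(|z|^2)^{-\a}\wik{|f(z)|^{2\a}}$, and expanding in the underlying Gaussian coefficients writes $\wikk{\widehat f(z)}{\a}$ explicitly as a sum of Wick monomials in $\{\z_m,\bar\z_m\}$ with smooth, $z$-dependent coefficients. A direct computation on this expansion, using orthogonality of distinct Wick monomials in $L^2(\P)$, yields the key bound
\[
\sup_{|z|=r}\Norm{\frac{\partial}{\partial z}\wikk{\widehat f(z)}{\a}}_{L^2(\P)} \le C_r\,\a!
\]
with $C_r$ independent of $\a$. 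Writing $u_N(z) = \sum_{\a=1}^N \frac{(-1)^{\a+1}}{\a\,\a!}\partial_z\wikk{\widehat f(z)}{\a}$ and invoking the orthogonality of the chaoses $\mathcal{W}^{\wik{2\a}}$ for different $\a$, we obtain $\norm{u_N(z) - u_{N'}(z)}_{L^2(\P)}^2 \le C_r^2 \sum_{\a=N+1}^{N'} \a^{-2}$, uniformly in $z\in\partial D(0,r)$. Hence $\{u_N(z)\}$ is $L^2(\P)$-Cauchy uniformly in $z$, and a triangle-inequality argument shows that $\int_{\partial D(0,r)} u_N(z)\,\dd z$ converges in $L^2(\P)$. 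Since $\partial_z(-\g_e)=0$, this yields the claimed identity $n(r)-\ex{n(r)} = \sum_{\a\ge1} n(r;\a)$ with convergence in $L^2(\P)$.

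It remains to verify that each $n(r;\a)$ lies in $\mathcal{W}^{\wik{2\a}}$. By the Wick multilinear expansion just used, $\wikk{\widehat f(z)}{\a} = \wikkk{\widehat f(z)}{\a}{\widehat f(z)}{\a}$ is, for every $z$, an $L^2(\P)$-convergent linear combination of Wick monomials in $\{\z_m,\bar\z_m\}$ of total degree $2\a$, which places it in $\mathcal{W}^{\wik{2\a}}$. Since $\mathcal{W}^{\wik{2\a}}$ is a closed subspace of $L^2(\P)$ stable under the bounded linear operations of $z$-differentiation and contour integration, we conclude $n(r;\a)\in\mathcal{W}^{\wik{2\a}}$. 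Orthogonality of distinct chaoses then automatically identifies $\sum_{\a\ge1} n(r;\a)$ as the Wiener chaos decomposition of $n(r)-\ex{n(r)}$, completing the proof.
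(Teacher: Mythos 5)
There is a genuine gap in your argument, and it is located exactly at the step you call the ``key bound''. The claim
\[
\sup_{|z|=r}\Norm{\frac{\partial}{\partial z}\wikk{\widehat f(z)}{\a}}_{L^2(\P)}\le C_r\,\a!
\]
is false. Since $\ex{\wikk{\widehat f(z)}{\a}\,\wikk{\widehat f(w)}{\a}}=(\a!)^2\,|\widehat K(z,w)|^{2\a}$, writing $|\widehat K(z,w)|^{2\a}=e^{\a\Phi(z,w)}$ with $\Phi=\log|\widehat K|^2$ and using that $\Phi=0$ and $\partial_z\Phi=0$ on the diagonal, one finds
\[
\ex{\Abs{\partial_z\wikk{\widehat f(z)}{\a}}^2}=(\a!)^2\,\partial_z\partial_{\bar w}e^{\a\Phi}\Big|_{w=z}=(\a!)^2\,\a\,\sigma(|z|^2)^2,\qquad \sigma(x)^2=\partial_z\partial_{\bar w}\Phi\big|_{w=z},
\]
so the correct bound carries an extra factor $\sqrt{\a}$ (for the hyperbolic GAF, $\sigma(|z|^2)^2=L/(1-|z|^2)^2$). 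Consequently
\[
\Norm{\tfrac{1}{\a\,\a!}\partial_z\wikk{\widehat f(z)}{\a}}_{L^2(\P)}^{2}=\frac{\sigma(|z|^2)^2}{\a},
\]
and by orthogonality of the chaoses your quantity $\norm{u_N(z)-u_{N'}(z)}_{L^2(\P)}^2$ equals $\sigma(|z|^2)^2\sum_{\a=N+1}^{N'}\a^{-1}$, which diverges. In other words, the termwise-differentiated Hermite series does \emph{not} converge in $L^2(\P)$ pointwise in $z$, so the uniform-Cauchy argument you build on it collapses; this is not a fixable constant, but a structural obstruction.

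The series $\sum_\a n(r;\a)$ converges only because of the averaging over the circle: after integrating in $z$ and $w$ the second moment of $n(r;\a)$ picks up an additional factor of order $\a^{-1/2}$ (this is the content of Lemma~\ref{lem: bound integral 2nd moment}, which gives $\ex{n_L(r;\a)^2}\lesssim \a^{-3/2}(1-r)^{-1}+\dots$, summable in $\a$, versus the non-summable $\a^{-1}$ one gets pointwise). For this reason the paper does not attempt pointwise convergence at all: it justifies interchanging expectation with the double contour integral and the derivatives (Lemmas~\ref{lem: exchange} and~\ref{lem: hypoth sat polys}, whose hypotheses for the $\log$--$\log$ case hold only off the diagonal and must be checked with care) and then computes $\mathcal{E}_M=\ex{(n(r)-\ex{n(r)}-\sum_{\a\le M}n(r;\a))^2}$ exactly as the tail of a convergent series via \cite{Jan}*{Theorem 3.9}. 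Your first part (membership of $n(r;\a)$ in $\mathcal{W}^{\wik{2\a}}$) is essentially the right idea, though ``stable under the bounded linear operations of $z$-differentiation'' should be replaced by the difference-quotient/dominated-convergence and Riemann-sum argument, since differentiation is not a bounded operation on $L^2(\P)$ and needs the moment bound of Lemma~\ref{lem: pth mom grad} to pass to the limit inside the closed subspace.
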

We will also need the following lemma, which generalises Lemma~\ref{lem: 2nd/4th moment chaos} above.
\begin{lem}\label{lem: gen 2nd/4th mom}
  If $\a\ge1$ then
  \begin{equation*}
      \ex{\big(n(r;\a)\big)^{2}} = \left(\frac{1}{2\pi i}\right)^{2} \frac{1}{\a^{2}} \iint\limits_{\partial D\left(0,r\right)^{2}} \frac{\partial^{2}} {\partial z\partial w} \frac{\left|G\left(z\overline{w}\right)\right|^{2\alpha}}{G(|z|^2)^\a G(|w|^2)^\a} \mathrm{d}z \mathrm{d}w
  \end{equation*}
  and
  \[
  \ex{\big(n_{L}(r;\alpha)\big)^{4}} = \left(\frac{1}{2\pi i}\right)^{4} \frac{1}{\alpha^{4}\left(\alpha!\right)^{4}} \int_{\partial D\left(0,r\right)^{4}} \frac{\partial^{4}} {\partial z_{1}\dots\partial z_{4}} \ex{\prod_{j=1}^{4}\wikk{\widehat{f}\left(z_{j}\right)} {\alpha} }\prod_{j=1}^{4}\mathrm{d}z_{j}.
  \]
\end{lem}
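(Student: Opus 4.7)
The plan is to obtain both identities by writing $n(r;\alpha)^k$ as a $k$-fold iterated contour integral, pushing the expectation inside the integrals (and through the derivatives), and then, for $k=2$, applying the diagram formula for products of Wick powers.

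Starting from the definition \eqref{eq: expansion for n a}, Fubini gives
\[
n(r;\alpha)^k = \left[\frac{(-1)^{\alpha+1}}{\alpha(\alpha!)}\right]^k \left(\frac{1}{2\pi i}\right)^k \int_{\partial D(0,r)^k} \prod_{j=1}^k \frac{\partial}{\partial z_j}\wikk{\widehat{f}(z_j)}{\alpha}\,\prod_{j=1}^k dz_j
\]
as a pathwise identity. The first step is to take expectations and interchange $\ex{\cdot}$ with the iterated contour integral. The second step is to exchange $\ex{\cdot}$ with each derivative $\partial_{z_j}$, so that
\[
\ex{\prod_{j=1}^{k}\frac{\partial}{\partial z_{j}}\wikk{\widehat{f}(z_{j})}{\alpha}} \;=\; \frac{\partial^{k}}{\partial z_{1}\cdots\partial z_{k}}\ex{\prod_{j=1}^{k}\wikk{\widehat{f}(z_{j})}{\alpha}}.
\]
For $k=4$ this is already the asserted formula, once the prefactors are matched (note $[(-1)^{\alpha+1}]^{4}=1$ and $[\alpha(\alpha!)]^{4}=\alpha^{4}(\alpha!)^{4}$). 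For $k=2$, one applies Wick's theorem (e.g.\ \cite{Jan}*{Theorem~3.12}) to the centered jointly Gaussian pair $(\widehat{f}(z),\widehat{f}(w))$: each of the $(\alpha!)^{2}$ complete pairings of $\alpha$ copies of $\widehat{f}(z)$ with $\alpha$ copies of $\overline{\widehat{f}(w)}$ contributes $\ex{\widehat{f}(z)\overline{\widehat{f}(w)}}^{\alpha}\overline{\ex{\widehat{f}(z)\overline{\widehat{f}(w)}}}^{\alpha}$, and pairings within the same point vanish because $\wikk{\cdot}{\alpha}$ already subtracts them off. Hence
\[
\ex{\wikk{\widehat{f}(z)}{\alpha}\wikk{\widehat{f}(w)}{\alpha}} = (\alpha!)^{2}\,\bigl|\widehat{K}(z,w)\bigr|^{2\alpha}
= (\alpha!)^{2}\,\frac{|G(z\bar{w})|^{2\alpha}}{G(|z|^{2})^{\alpha}G(|w|^{2})^{\alpha}},
\]
and the factor $(\alpha!)^{2}$ cancels the $(\alpha!)^{2}$ in the prefactor, producing the stated formula.

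The only non-trivial point is justifying the two interchanges. For Fubini between $\ex{\cdot}$ and the contour integrals it suffices to bound $\ex{\prod_{j}|\partial_{z_{j}}\wikk{\widehat{f}(z_{j})}{\alpha}|}$ uniformly in $(z_{1},\ldots,z_{k})\in\partial D(0,r)^{k}$. Since $\widehat{f}$ and its first derivative are jointly Gaussian with covariance varying smoothly over a neighbourhood of the compact set $\partial D(0,r)$, the Wick polynomial $\partial_{z}\wikk{\widehat{f}(z)}{\alpha}$ has Gaussian-type tails uniformly on the contour, so all moments are finite and bounded, which gives the required integrability. The derivative-expectation exchange follows from the same uniform control by differentiating under the integral on the probability space: the covariance kernel $G(z\bar w)$ is holomorphic in $z$ (and antiholomorphic in $w$) in a neighbourhood of $\partial D(0,r)^{2}$, so the Gaussian variables and their derivatives are jointly well-defined and one can apply dominated convergence to difference quotients. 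Once these interchanges are in place, the identities follow as outlined. The main technical nuisance — though not a genuine obstacle — is simply keeping track of the combinatorial factors and checking that the prefactors match.
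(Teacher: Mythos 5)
Your proposal is correct and follows essentially the same route as the paper: interchange expectation with the contour integrals and the derivatives, then apply the diagram formula for second moments of Wick products (\cite{Jan}*{Theorem~3.9}), with the combinatorial factor $(\a!)^2$ cancelling the prefactor. Your justification of the interchanges via uniform moment bounds on $\nabla\wikk{\widehat{f}(z)}{\a}$ along the contour is precisely the content of the paper's Lemmas~\ref{lem: pth mom grad}--\ref{lem: hypoth sat polys}, so nothing essential is missing.
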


To prove the above results we require the following lemmas, which allow us to justify interchanging the order of some operations.
\begin{lem}[cf \cite{BS}*{Lemma 7}]\label{lem: pth mom grad}Given a polynomial $P$, $R<R_0$ and $1\le p<+\infty$
we have
\[
\E\left[\left|\nabla P\left(\left|\widehat{f}\left(z\right)\right|^{2}\right)\right|^{p}\right]\le C\left(p,R,P,G\right)
\]
for $z\in D\left(0,R\right)$.
\end{lem}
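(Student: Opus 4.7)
The plan is to use the chain rule to express $\nabla P(|\widehat f(z)|^2)$ as a polynomial in $f(z), f'(z)$ and their conjugates with uniformly bounded deterministic coefficients, and then bound the resulting random polynomial in $L^p$ via uniform Gaussian moment estimates and H\"older's inequality.

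First I would apply the chain rule: $\nabla P(|\widehat f|^2)=P'(|\widehat f|^2)\,\nabla|\widehat f|^2$, and since $|\widehat f|^2$ is real, $|\nabla|\widehat f|^2|=2|\partial_z|\widehat f|^2|$. A direct Wirtinger calculation using $|\widehat f(z)|^2=f(z)\overline{f(z)}/G(|z|^2)$ gives
$$\partial_z|\widehat f(z)|^2=\frac{f'(z)\overline{f(z)}}{G(|z|^2)}-\frac{|f(z)|^2\,\bar z\,G'(|z|^2)}{G(|z|^2)^2}.$$
Combined with the observation that $P'(|\widehat f|^2)$ is a polynomial in $|f|^2/G(|z|^2)$, this presents $\nabla P(|\widehat f(z)|^2)$ as a finite sum of monomials in $f(z)$, $f'(z)$ and their conjugates, with coefficients that are rational functions of $z$, $G(|z|^2)$ and $G'(|z|^2)$.

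Next I would bound all these ingredients uniformly on $\overline{D(0,R)}$. Since $R<R_0$, the functions $G$, $G'$ (and higher derivatives) are continuous on $[0,R^2]$ and hence bounded; in the generic case $b_0>0$ the monotonicity of $G$ also gives $G(|z|^2)\ge G(0)>0$, so the deterministic coefficients are uniformly bounded. On the probabilistic side, $(f(z),f'(z))$ is jointly complex Gaussian with variances $G(|z|^2)$ and $G'(|z|^2)+|z|^2 G''(|z|^2)$, both uniformly bounded on $\overline{D(0,R)}$, so $\sup_{z\in D(0,R)}\E[|f(z)|^q+|f'(z)|^q]<\infty$ for every finite $q$. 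H\"older's inequality applied to the monomials then yields the claimed $L^p$ bound, with a constant that depends on $p$, $R$, $P$ and the coefficients of $G$.

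The only delicate point --- and the main obstacle --- is the lower bound on $G(|z|^2)$ near the origin in the degenerate case $b_0=0$ (e.g.\ $L=0$). There one may let $m_0\ge 1$ denote the smallest index with $b_{m_0}>0$, factor $f(z)=z^{m_0}\widetilde f(z)$ and $G(|z|^2)=|z|^{2m_0}\widetilde G(|z|^2)$ with $\widetilde G(0)>0$, so that $|\widehat f(z)|^2=|\widetilde f(z)|^2/\widetilde G(|z|^2)$ is smooth and bounded on $\overline{D(0,R)}$; the previous argument then goes through verbatim with $(\widetilde f,\widetilde G)$ in place of $(f,G)$. (Alternatively, since the lemma is only used to justify interchanging limits in contour integrals over $\partial D(0,r)$ with $r>0$, one can equally well restrict to an annulus $\{\varepsilon\le |z|\le R\}$, where the singularity at $0$ is irrelevant.)
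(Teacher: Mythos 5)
Your proof is correct and follows essentially the same route as the paper's: reduce via the chain rule and H\"older to bounding $\nabla\lvert\widehat f\rvert^{2}$, compute the Wirtinger derivative explicitly, and control the two resulting terms using uniform bounds on $G, G', G''$ on $[0,R^2]$ together with Gaussian moment estimates for $f(z)$, $f'(z)$ and $\widehat f(z)$. The one place you go beyond the paper is the degenerate case $b_0=0$ (relevant for $L=0$, where $G(0)=0$): the paper simply asserts that $G$ is zero-free on $[0,1)$, and your factorisation $f(z)=z^{m_0}\widetilde f(z)$, $G(x)=x^{m_0}\widetilde G(x)$ (or the restriction to an annulus) is a legitimate fix for that overlooked case.
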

\begin{lem}[cf \cite{BS}*{Lemma 8}]\label{lem: exchange}
  Let $\psi_j:\R_+\to\R$ be differentiable functions for $1\leq j\leq N$ and let $\Psi_j=\psi_j\circ \Abs{\widehat{f}}^2$. Suppose that
  \begin{equation}\label{eq: pth mom grad Lj}
    \int_{\partial D\left(0,r\right)^N} \E\Big[ \Big| \prod_{j=1}^N \nabla\Psi_j(z_j) \Big| \Big] \prod_{j=1}^N |\mathrm{d}z_j| <+\infty
  \end{equation}
  and that, for almost every tuple $(z_1,\dots,z_N)$ with respect to the measure $\prod_{j=1}^N |\mathrm{d}z_j|$, there exists $\e_0>0$ and $1<p<2$ such that
  \begin{equation}\label{eq: bdd grad p mom}
    \sup_{\forall j:\,w_j\in D(z_j,\e_0)}\E\Big[ \Big| \prod_{j=1}^N \nabla\Psi_j(w_j) \Big|^p \Big]<+\infty.
  \end{equation}
  Then
  \begin{equation*}
    \E\bigg[ \int_{\partial D\left(0,r\right)^N} \frac{\partial^N}{\partial z_1\cdots\partial z_N} \prod_{j=1}^N  \Psi_j(z_j) \prod_{j=1}^N |\mathrm{d}z_j| \bigg]
    =\int_{\partial D\left(0,r\right)^N} \frac{\partial^N}{\partial z_1\cdots\partial z_N} \ex{\prod_{j=1}^N \Psi_j(z_j) }\prod_{j=1}^N |\mathrm{d}z_j|.
  \end{equation*}
\end{lem}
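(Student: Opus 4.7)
My strategy is to prove the lemma in two steps: first use Fubini's theorem to interchange $\E$ with the contour integral, and then apply differentiation under the expectation to commute $\E$ with each partial derivative.

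For the Fubini step, the key observation is that since each factor $\Psi_j(z_j)$ depends only on its own variable $z_j$, the mixed partial derivative factorises as
\[
\frac{\partial^N}{\partial z_1\cdots\partial z_N}\prod_{j=1}^N \Psi_j(z_j) = \prod_{j=1}^N \frac{\partial\Psi_j}{\partial z_j}(z_j).
\]
Since $|\partial_{z_j}\Psi_j| \lesssim |\nabla\Psi_j|$ from the definition of the Wirtinger derivative, hypothesis \eqref{eq: pth mom grad Lj} provides the $L^1$-integrability on the product measure $\P \otimes \prod|\mathrm{d}z_j|$ needed for Fubini, yielding
\[
\text{LHS} = \int_{\partial D(0,r)^N} \E\Big[\prod_{j=1}^N \partial_{z_j}\Psi_j(z_j)\Big]\prod_{j=1}^N|\mathrm{d}z_j|.
\]
It then suffices to verify the pointwise identity
\[
\E\Big[\prod_{j=1}^N \partial_{z_j}\Psi_j(z_j)\Big] = \frac{\partial^N}{\partial z_1\cdots\partial z_N}\E\Big[\prod_{j=1}^N \Psi_j(z_j)\Big]
\]
for a.e.~$(z_1,\dots,z_N)$ on the product contour, which matches the integrand on the right-hand side.

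I would establish this pointwise identity by induction on $N$, commuting one partial derivative with $\E$ at each stage via the classical differentiation-under-the-expectation theorem. That theorem requires a local uniform integrability estimate near the base point, and hypothesis \eqref{eq: bdd grad p mom} provides a local $L^p$ bound with $p>1$, which yields uniform integrability. The main obstacle is that intermediate stages of the induction involve expectations of mixed products in which some factors have already been differentiated and others still appear as $\Psi_j$, whereas hypothesis \eqref{eq: bdd grad p mom} only bounds the fully-differentiated product $\prod_j\nabla\Psi_j$. To handle this, I would combine H\"older's inequality with Lemma~\ref{lem: pth mom grad} (which gives uniform $L^q$ bounds on $\nabla P(|\widehat{f}|^2)$, and hence also on the undifferentiated factors $\Psi_j$ when $\psi_j$ is a polynomial, as in our applications) to deduce that each intermediate product is locally $L^{p'}$-bounded for some $p'>1$. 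A cleaner alternative that bypasses the induction entirely would be to pass all $N$ derivatives through $\E$ simultaneously by expressing the mixed partial as a single $N$-fold iterated difference quotient and applying Vitali's convergence theorem, using hypothesis \eqref{eq: bdd grad p mom} directly as the uniform-integrability ingredient.
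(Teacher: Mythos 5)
The paper does not actually prove this lemma: it states that the proofs of Lemmas~\ref{lem: exchange} and~\ref{lem: hypoth sat polys} are ``essentially identical to the proofs given in \cite{BS}'' and omits them, so the comparison is with \cite{BS}*{Lemma 8}. Your two-step skeleton (factorise the mixed partial as $\prod_j\partial_{z_j}\Psi_j$, apply Fubini via \eqref{eq: pth mom grad Lj}, then differentiate under the expectation via \eqref{eq: bdd grad p mom}) is exactly the structure of that argument. The problem is that the route you designate as primary --- induction on $N$, commuting one derivative at a time --- has a gap that your proposed patch does not close. The intermediate expectations contain undifferentiated factors $\Psi_j$, and the hypotheses \eqref{eq: pth mom grad Lj}--\eqref{eq: bdd grad p mom} give no moment control whatsoever on $\Psi_j$ itself, only on the gradients; a H\"older argument therefore needs input from outside the stated hypotheses. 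Moreover, your fallback ``$\psi_j$ polynomial, as in our applications'' misses the point of the lemma: the essential application is Lemma~\ref{lem: hypoth sat polys}, parts (b) and (c), where $\psi_j=\log$, which is needed precisely to justify the chaos expansion of $\log|\widehat f|^2$ in Proposition~\ref{prop: gen chaos expan}. (For $\log$ one does have $\ex{|\log|\widehat f(z)|^2|^q}<\infty$ for all $q$, so the induction could be rescued case by case, but that is no longer a proof of the lemma as stated.)

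What you call the ``cleaner alternative'' is in fact the argument of \cite{BS} and should be promoted to the main proof. The key estimate is: writing each difference quotient by the fundamental theorem of calculus along the segment, the product of the $N$ difference quotients is dominated by
\[
\int_{[0,1]^N}\prod_{j=1}^N\bigl|\nabla\Psi_j(z_j+t_jh_j)\bigr|\,\mathrm{d}t_1\cdots\mathrm{d}t_N,
\]
and Jensen's inequality (the $\mathrm{d}t$ integral is against a probability measure) together with \eqref{eq: bdd grad p mom} gives a uniform $L^p$ bound for $|h_j|<\e_0$, hence uniform integrability, hence $L^1$-convergence of the difference quotients to $\prod_j\partial_{z_j}\Psi_j(z_j)$ (the almost sure convergence uses that $f(z_j)\neq0$ a.s., so each $\Psi_j$ is a.s. differentiable at $z_j$; and one should take the increments separately in the real and imaginary directions to recover the Wirtinger derivative). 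Two points still deserve a sentence: the convergence of the iterated difference quotients of $F=\ex{\prod_j\Psi_j(z_j)}$ identifies $\partial^NF/\partial z_1\cdots\partial z_N$ only once one knows this mixed partial exists and is continuous, which here follows from the explicit Wick/diagram formula expressing $F$ in terms of the covariance kernel; and the Fubini step should record the elementary bound $|\partial_{z_j}\Psi_j|\le|\nabla\Psi_j|$ so that \eqref{eq: pth mom grad Lj} applies. With the difference-quotient argument made primary and these details supplied, your proof is complete and coincides with the one the paper is implicitly invoking.
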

\begin{lem}[cf \cite{BS}*{Lemmas 9, 10 and 11}]\label{lem: hypoth sat polys}
  \phantom{a}
  \begin{alist}
  \item Suppose that $\psi_j$ are polynomials for $1\leq j\leq N$. Then \eqref{eq: pth mom grad Lj} and \eqref{eq: bdd grad p mom} hold.
  \item Suppose that $N=2$, $\psi_1=\log$ and $\psi_2$ is a polynomial. Then \eqref{eq: pth mom grad Lj} and \eqref{eq: bdd grad p mom} hold.
  \item Suppose that $N=2$ and $\psi_1=\psi_2=\log$. Then \eqref{eq: pth mom grad Lj} holds and for every pair $(z_1,z_2)$ with $z_1\neq z_2$, \eqref{eq: bdd grad p mom} holds.
  \end{alist}
\end{lem}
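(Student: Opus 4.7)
The plan is to adapt the approach of \cite{BS}*{Lemmas~9--11} to the present setting. Two properties drive the argument: $\widehat{f}(z)$ is a standard complex Gaussian for every $z$, so $\E[|\widehat{f}(z)|^{-\alpha}]<\infty$ iff $\alpha<2$, and $(\widehat{f}(z_1),\widehat{f}(z_2))$ is a nondegenerate complex Gaussian pair whenever $z_1\ne z_2$. Throughout, the chain rule gives $\nabla\Psi_j(z)=\psi_j'(|\widehat{f}(z)|^2)\,\partial_z|\widehat{f}(z)|^2$. For part (a) each $\nabla\Psi_j(z_j)$ is a polynomial in $\widehat{f}(z_j),\widehat{f}'(z_j)$ and their conjugates with coefficients continuous in $z_j$, so Lemma~\ref{lem: pth mom grad} bounds every $\E[|\nabla\Psi_j(z_j)|^m]$ uniformly on $\partial D(0,r)$. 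Hölder's inequality with $N$ equal exponents then yields uniform bounds on $\E[|\prod_j\nabla\Psi_j(z_j)|^p]$ for any $p\ge 1$, giving both \eqref{eq: pth mom grad Lj} and \eqref{eq: bdd grad p mom}.

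For part (b) I would write $\partial_z\log|\widehat{f}(z)|^2=\widehat{f}'(z)/\widehat{f}(z)$ up to a continuous deterministic term, and apply Hölder with conjugate exponents $(\alpha,\beta)$ to get
\[
\E[|\widehat{f}'(z)/\widehat{f}(z)|^s]\le\E[|\widehat{f}'(z)|^{s\alpha}]^{1/\alpha}\E[|\widehat{f}(z)|^{-s\beta}]^{1/\beta},
\]
which is finite as long as $s\beta<2$; by letting $\beta\to 1^+$ this covers every $s\in(0,2)$. Combined with Lemma~\ref{lem: pth mom grad} applied to the polynomial factor $\nabla\Psi_2$, a second Hölder step with $p\in(1,2)$ and exponents $q_1$ close to $1$, $q_2$ large (arranged so $pq_1<2$) delivers \eqref{eq: bdd grad p mom}; integrating over the compact domain $\partial D(0,r)^2$ yields \eqref{eq: pth mom grad Lj}.

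Part (c) is where the real work lies, since applying the part~(b) estimate to each logarithmic factor separately would only deliver $p<1$. I would instead exploit the joint Gaussian structure: for fixed $z_1\ne z_2$, regress the derivatives on the values,
\[
\widehat{f}'(z_j)=a_{j,1}(z_1,z_2)\,\widehat{f}(z_1)+a_{j,2}(z_1,z_2)\,\widehat{f}(z_2)+\eta_j,\qquad j=1,2,
\]
where $(\eta_1,\eta_2)$ is a complex Gaussian pair independent of $(\widehat{f}(z_1),\widehat{f}(z_2))$ with coefficients and conditional covariance continuous on $\{z_1\ne z_2\}$. Expanding $\widehat{f}'(z_1)\widehat{f}'(z_2)/(\widehat{f}(z_1)\widehat{f}(z_2))$ produces a finite sum of terms of the form $\widehat{f}(z_i)^{a_i}/\widehat{f}(z_j)^{a_j}$, $\eta_j/\widehat{f}(z_j)$ and $\eta_1\eta_2/(\widehat{f}(z_1)\widehat{f}(z_2))$. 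Because the joint density of $(\widehat{f}(z_1),\widehat{f}(z_2))$ is bounded on compact subsets of $\{z_1\ne z_2\}$ and $\int|w_1|^{-\alpha_1}|w_2|^{-\alpha_2}\,dm$ converges near the origin for $\alpha_1,\alpha_2<2$, each term contributes a finite $p$-th moment for every $p\in(1,2)$, establishing \eqref{eq: bdd grad p mom}.

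The main obstacle is the integrability \eqref{eq: pth mom grad Lj} for part (c), because the covariance matrix of $(\widehat{f}(z_1),\widehat{f}(z_2))$ degenerates as $z_2\to z_1$ (its determinant vanishes like $|z_1-z_2|^2$). I would handle this by reparametrising via the pair $(\widehat{f}(z_1),(\widehat{f}(z_2)-\widehat{f}(z_1))/(z_2-z_1))$, which remains uniformly nondegenerate in a neighbourhood of the diagonal, and then tracking explicitly how the constants in the pointwise estimate grow as $z_2\to z_1$. The desired conclusion is that $\E[|\nabla\Psi_1(z_1)\nabla\Psi_2(z_2)|]$ has at worst an integrable power-type singularity on the diagonal, which suffices for \eqref{eq: pth mom grad Lj} on $\partial D(0,r)^2$.
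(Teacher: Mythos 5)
Your proposal is correct and follows essentially the same route as the paper, which omits the proof entirely and defers to \cite{BS}*{Lemmas 9, 10 and 11}: polynomial factors via Lemma~\ref{lem: pth mom grad} and H\"older, single logarithmic factors via the finiteness of $\E\big[|\widehat{f}(z)|^{-s}\big]$ for $s<2$, and the double-logarithm case via the joint law of $\big(\widehat{f}(z_1),\widehat{f}(z_2)\big)$ together with quantitative control of the degeneracy on the diagonal (which yields a logarithmic, or arbitrarily small power-type, hence integrable, singularity). You have correctly isolated the only delicate point --- the diagonal singularity in part (c), where the naive Cauchy--Schwarz argument caps the exponent at $p<1$ --- and your proposed resolution is the same decorrelation device used in \cite{BS}.
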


The proof of Lemma~\ref{lem: pth mom grad} is postponed until later. The proofs of Lemmas~\ref{lem: exchange} and~\ref{lem: hypoth sat polys} are essentially identical to the proofs given in \cite{BS}, and are accordingly omitted. Combining Lemmas~\ref{lem: exchange} and~\ref{lem: hypoth sat polys} with \cite{Jan}*{Theorem 3.9} immediately yields Lemma~\ref{lem: gen 2nd/4th mom}. We now proceed to prove Proposition~\ref{prop: gen chaos expan}

\begin{proof}[Proof of Proposition~\ref{prop: gen chaos expan}]
We first show that the random variable $n\left(r;\a\right)$ defined in \eqref{eq: expansion for n a} belongs to $\mathcal{W}^{\wik{2\a}}$. Notice that Lemma~\ref{lem: pth mom grad} implies that
\begin{equation}
\E\left[\left|\nabla\wikk{\widehat{f}\left(z\right)}{\alpha}\right|^{2}\right]\le C\left(R,\alpha,G\right)\label{eq: grad est}
\end{equation}
for $z\in D\left(0,R\right)$. Standard arguments show that $\wikk{\widehat{f}\left(z\right)}{\alpha}$
is in $\mathcal{W}^{\wik{2\a}}$, and therefore so is $\frac{\wikk{\widehat{f}\left(z+h\right)}{\alpha}-\wikk{\widehat{f}\left(z\right)}{\alpha}}{h}$
for any $h\in\C$ such that $|z+h|<R_0$. Taking real $h\to0$ and applying the mean value
theorem and \eqref{eq: grad est}, we use dominated convergence to
see that $\frac{\partial}{\partial x}\wikk{\widehat{f}\left(z\right)}{\alpha}$
is in $\mathcal{W}^{\wik{2\a}}$ for any fixed $z\in D(0,R_0)$. Arguing similarly for
imaginary $h$ we see that $\frac{\partial}{\partial z} \wikk{\widehat{f}\left(z\right)}{\alpha}$
is in $\mathcal{W}^{\wik{2\a}}$. Now write $g_{\alpha}\left(z\right)= \frac{\partial}{\partial z} \wikk{\widehat{f}\left(z\right)}{\alpha}$
and consider the Riemann sum $\frac{1}{N}\sum_{j=1}^{N}g\left(re^{2\pi i\frac{j}{N}}\right)$
which is in $\mathcal{W}^{\wik{2\a}}$. Then
\[
\ex{\bigg| \frac{1}{N} \sum_{j=1}^{N}g\left(re^{2\pi i\frac{j}{N}}\right) \bigg|^{2}} \le \frac{1}{N} \sum_{j=1}^{N} \ex{\left|g\left(re^{2\pi i\frac{j}{N}}\right) \right|^{2}} \le C\left(R,\alpha,G\right)
\]
and once more applying dominated convergence we see that the Riemann
sums converge to the integral
\[
\integlim{0}{\pi}{g\left(re^{i\theta}\right)}{\theta}=\frac{1}{ir}\integlim{\partial D\left(0,r\right)}{}{\frac{\partial}{\partial z}\wikk{\widehat{f}\left(z\right)}{\alpha}}{z}
\]
in $L^2$ and so $n(r;\a)$ belongs to $\mathcal{W}^{\wik{2\a}}$ as claimed.

It remains to prove that $\sum_{\a=1}^{M}n\left(r;\a\right)\to n\left(r\right)-\ex{n\left(r\right)}$
in $L^{2}$ as $M\to\infty$. The basic strategy is to implement the scheme outlined in Section~\ref{sec: overview}, and to use the lemmas above to justify the steps. We will briefly outline the argument, which closely follows  \cite{BS}*{Section 3.2}.

We define
\begin{equation*}
    \log_M\left|\z\right|^{2}=-\g_e+\sum_{\a=1}^{M}\frac{(-1)^{\a+1}}{\a(\a!)}\wikk{\z}{\a}
\end{equation*}
to be the truncation of the series in \eqref{eq: log expan}. Notice that
\begin{equation*}
    n\left(r\right)-\ex{n\left(r\right)}-\sum_{\a=1}^{M}n\left(r;\a\right)=\frac{1}{2\pi i}\integlim{\partial D\left(0,r\right)}{}{\frac{\partial}{\partial z}\left(\log\Abs{\widehat{f}_L(z)}^{2}-\log_M \Abs{\widehat{f}_L(z)}^{2}\right)}{z}.
\end{equation*}
We square this expression and take its expectation. Appealing to Lemmas~\ref{lem: exchange} and~\ref{lem: hypoth sat polys} we can exchange the order of operations to get
\begin{align*}
\mathcal{E}_M&\coloneqq\E\left[\left(n\left(r\right)- \ex{n\left(r\right)} -\sum_{\alpha=1}^{M} n\left(r;2\alpha\right) \right)^{2}\right]\\
&=-\frac{1}{4\pi^{2}}\iint\limits_{\partial D\left(0,r\right)^{2}} \frac{\partial^{2}} {\partial z\partial w}\ex{\left(\log\Abs{\widehat{f}_L(z)}^{2}-\log_M \Abs{\widehat{f}_L(z)}^{2}\right)\left(\log\Abs{\widehat{f}_L(w)}^{2}-\log_M \Abs{\widehat{f}_L(w)}^{2}\right)} \mathrm{d}z\,\mathrm{d}w\\
&=-\frac{1}{4\pi^{2}}\iint\limits_{\partial D\left(0,r\right)^{2}} \frac{\partial^{2}} {\partial z\partial w}\sum_{\a,\b>M}\frac{(-1)^{\a+\b}}{\a\b(\a!\b!)}\ex{\wikk{\widehat{f}_L(z)}{\a}\,\wikk{\widehat{f}_L(w)}{\b}} \mathrm{d}z\,\mathrm{d}w\\
&=-\frac{1}{4\pi^{2}} \sum_{\alpha>M} \frac{1}{\alpha^{2}}\iint\limits_{\partial D\left(0,r\right)^{2}} \frac{\partial^{2}} {\partial z\partial w} \left|\frac{G(z\overline{w})}{\sqrt{G(|z|^2)G(|w|^2)}}\right|^{2\alpha}\mathrm{d}z\,\mathrm{d}w,
\end{align*}
where the final equality follows from \cite{Jan}*{Theorem 3.9}. Simplifying the integrand à la \cite{KN}*{Claim A.2} we get
\begin{equation}\label{eq: tail L^2}
\mathcal{E}_M=\mathcal{E}_M(r)=\frac{1}{2\pi} \sum_{\alpha>M} \integlim{-\pi}{\pi}{\Abs{\frac{G(r^2e^{i\t})} {G(r^2)}}^{2\a} \Abs{ \frac{G'(r^2e^{i\t}) r^2 e^{i\t}} {G(r^2e^{i\t})} - \frac{G'(r^2)r^2} {G(r^2)} }^2 } {\t}
\end{equation}
which is the tail of a convergent sum.
\end{proof}
\begin{rmk}
Putting $M=0$ in \eqref{eq: tail L^2} is a recasting of \eqref{eq: varformula}.
\end{rmk}

\begin{proof}[Proof of Lemma \ref{lem: pth mom grad}]
We mimic the proof of \cite{BS}*{Lemma 7}. It suffices to show that
\[
\ex{\left|\nabla\left(\left|\widehat{f}\left(z\right)\right|^{2}\right)\right|^{p}} \le C\left(p,R,G\right)
\]
for $z\in D\left(0,R\right)$. Now
\[
\left|\nabla\left|\widehat{f}\left(z\right)\right|^{2}\right|\lesssim\frac{|f'(z)f(z)|}{G\left(\left|z\right|^{2}\right)}+\frac{|z|G'\left(\left|z\right|^{2}\right)}{G\left(\left|z\right|^{2}\right)}|\widehat{f}(z)|^{2}
\]
and since $\ex{|\widehat{f}(z)|^{2p}}$ is independent of
$z$ and $G$ is analytic and zero-free on $[0,1)$ we see that
\[
\ex{\left|\frac{|z|G'\left(\left|z\right|^{2}\right)}{G\left(\left|z\right|^{2}\right)}|\widehat{f}(z)|^{2}\right|^{p}}\le C\left(R,p,G\right).
\]
Similarly
\[
\ex{\left|\frac{|f'(z)f(z)|}{G\left(\left|z\right|^{2}\right)}\right|^{p}} \le \ex{\frac{|f'(z)|^{2p}} {G\left(\left|z\right|^{2}\right)^{p}}} ^{\frac{1}{2}} \ex{|\widehat{f}(z)|^{2p}}^{\frac{1}{2}} \le C\left(R,p,G\right) \ex{\frac{|f'(z)|^{2p}} {G\left(\left|z\right|^{2}\right)^{p}}}^{\frac{1}{2}}.
\]
Now $f'\left(z\right)$ is a Gaussian random variable with variance
$G'\left(\left|z\right|^{2}\right)+\left|z\right|^{2}G''\left(\left|z\right|^{2}\right)$
and so
\[
\ex{\frac{|f'(z)|^{2p}} {G\left(\left|z\right|^{2}\right)^{p}}} = \Gamma\left(1+\frac{p}{2}\right) \left(\frac{G'\left(\left|z\right|^{2}\right) +\left|z\right|^{2}G''\left(\left|z\right|^{2}\right)} {G\left(\left|z\right|^{2}\right)}\right)^{p}
\]
which is bounded as before.
\end{proof}

\section{Standard lemma to deduce CLT}\label{app: CLT}
In this appendix we prove that the scheme outlined in Section~\ref{sec: L>1/2} indeed implies a CLT.
\begin{lem}
  Let $X_n$ and $X_n(M)$, for $n>0$ and $M>0$, be real-valued random variables with mean $0$ and variance $1$. Suppose that the following holds:
\begin{itemize}
  \item For each fixed $M$,
  \[
  X_n(M) \overset{d}{\longrightarrow} \cN_\R(0,1) \quad \text{as} \quad n\to\infty.
  \]
  \item We have
  \[
  \lim_{M\to\infty}\ex{(X_n-X_n(M))^2} =0,
  \]
  uniformly in $n$.
\end{itemize}
Then
\[
 X_n \overset{d}{\longrightarrow} \cN_\R(0,1) \quad \text{as} \quad n\to\infty.
  \]
\end{lem}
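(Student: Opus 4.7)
My plan is to prove convergence of the characteristic functions $\varphi_{X_n}(t) = \ex{e^{itX_n}}$ to $e^{-t^2/2}$ pointwise on $\R$, which by L\'evy's continuity theorem yields convergence in distribution to $\cN_\R(0,1)$. The natural approach is a three-epsilon/triangle inequality argument that uses $X_n(M)$ as an intermediate approximation, exploiting $L^2$ closeness to control characteristic functions and the CLT for each fixed $M$ to handle the limit in $n$.

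Concretely, fix $t \in \R$ and estimate
\[
|\varphi_{X_n}(t) - e^{-t^2/2}| \le |\varphi_{X_n}(t) - \varphi_{X_n(M)}(t)| + |\varphi_{X_n(M)}(t) - e^{-t^2/2}|.
\]
For the first term I would use the elementary inequality $|e^{ia}-e^{ib}| \le |a-b|$, followed by Cauchy--Schwarz, giving
\[
|\varphi_{X_n}(t) - \varphi_{X_n(M)}(t)| \le |t|\,\ex{|X_n - X_n(M)|} \le |t|\,\sqrt{\ex{(X_n - X_n(M))^2}}.
\]
By the second hypothesis this is at most $|t|\,\e(M)$ where $\e(M) \to 0$ as $M\to\infty$ independently of $n$. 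Given $\eta > 0$, choose $M$ so large that $|t|\,\e(M) < \eta/2$. For this fixed $M$, the first hypothesis gives $\varphi_{X_n(M)}(t) \to e^{-t^2/2}$ as $n\to\infty$, so for all $n$ sufficiently large (depending on $M$ and $\eta$) the second term is less than $\eta/2$. This yields $\limsup_{n\to\infty} |\varphi_{X_n}(t) - e^{-t^2/2}| \le \eta$; since $\eta$ was arbitrary, the characteristic functions converge pointwise.

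There is no real obstacle here: the uniformity of the $L^2$ approximation in $n$ is precisely what decouples the two limits $n\to\infty$ and $M\to\infty$, and this is the only subtle point to check. An alternative route would be to establish tightness of $\{X_n\}$ (immediate from the variance being $1$) and identify every subsequential weak limit as $\cN_\R(0,1)$ via the same approximation, but the characteristic function approach is cleaner and more direct.
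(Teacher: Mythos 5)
Your proof is correct and is essentially identical to the paper's: both compare characteristic functions via the triangle inequality, bound $|\varphi_{X_n}(t)-\varphi_{X_n(M)}(t)|$ by $|t|\,\E|X_n-X_n(M)|$, pass to the $L^2$ hypothesis (you make the Cauchy--Schwarz step explicit, which the paper leaves implicit), and then use the fixed-$M$ CLT to finish. No issues.
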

We will give a short proof using characteristic functions, though one can also give an elementary direct proof.
\begin{proof}
We write $\varphi(t)=e^{-t^2/2}$,
\begin{equation*}
    \phi_n(t)=\ex{e^{itX_n}}\quad\text{and}\quad \phi_{n,M}(t)=\ex{e^{itX_n(M)}}.
\end{equation*}
We estimate, for fixed $t$,
\begin{align*}
    \abs{\phi_n(t)-\varphi(t)}&\le \E\big|e^{it(X_n-X_n(M))}-1\big|+\abs{\phi_{n,M}(t)-\varphi(t)}\\
    &\le |t| \E\big|X_n-X_n(M)\big|+\abs{\phi_{n,M}(t)-\varphi(t)}.
\end{align*}
Applying the hypothesis we see that $\phi_n$ converges pointwise to $\varphi$, which implies the result.
\end{proof}
\begin{center}
  {\sc References}
\end{center}
\begin{biblist}[\normalsize]

\bib{ASS}{article}{
   author={Aguirre, Ander},
   author={Soshnikov, Alexander},
   author={Sumpter, Joshua}, 
   title={Pair dependent linear statistics for C$\beta$E},
   journal={Random Matrices Theory Appl.},
   volume={10},
   date={2021},
   number={4},
   pages={Paper No. 2150035, 42},
}

\bib{BH}{article}{
   author={Bardenet, R\'{e}mi},
   author={Hardy, Adrien},
   title={Time-frequency transforms of white noises and Gaussian analytic
   functions},
   journal={Appl. Comput. Harmon. Anal.},
   volume={50},
   date={2021},
   pages={73--104},
}

\bib{Stellarbook}{book}{
   author={Bengtsson, Ingemar},
   author={\.{Z}yczkowski, Karol},
   title={Geometry of quantum states},
   note={Second edition},
   publisher={Cambridge University Press, Cambridge},
   date={2017},
   pages={xv+619},
   isbn={978-1-107-02625-4},
   isbn={978-1-107-65614-7},
}

\bib{regVarBook}{book}{
   author={Bingham, N. H.},
   author={Goldie, C. M.},
   author={Teugels, J. L.},
   title={Regular variation},
   series={Encyclopedia of Mathematics and its Applications},
   volume={27},
   publisher={Cambridge University Press, Cambridge},
   date={1989},
   pages={xx+494},
   isbn={0-521-37943-1},
}

\bib{BM}{article}{
   author={Breuer, Peter},
   author={Major, P\'{e}ter},
   title={Central limit theorems for nonlinear functionals of Gaussian
   fields},
   journal={J. Multivariate Anal.},
   volume={13},
   date={1983},
   number={3},
   pages={425--441},
}

\bib{B}{article}{
   author={Buckley, Jeremiah},
   title={Fluctuations in the zero set of the hyperbolic Gaussian analytic
   function},
   journal={Int. Math. Res. Not. IMRN},
   date={2015},
   number={6},
   pages={1666--1687},
}

\bib{BS}{article}{
   author={Buckley, Jeremiah},
   author={Sodin, Mikhail},
   title={Fluctuations of the increment of the argument for the Gaussian
   entire function},
   journal={J. Stat. Phys.},
   volume={168},
   date={2017},
   number={2},
   pages={300--330},
}

\bib{CN}{article}{
   author={Chhaibi, Reda},
   author={Najnudel, Joseph},
   title={On the circle, $GMC^\gamma = \varprojlim C\beta E_n$ for $\gamma = \sqrt{\frac{2}{\beta}}, $ $( \gamma \leq 1 )$},
   eprint={arXiv:1904.00578 [math.PR]},
}

\bib{Cuz}{article}{
   author={Cuzick, Jack},
   title={A central limit theorem for the number of zeros of a stationary
   Gaussian process},
   journal={Ann. Probability},
   volume={4},
   date={1976},
   number={4},
   pages={547--556},
}

\bib{DE}{article}{
   author={Diaconis, Persi},
   author={Evans, Steven N.},
   title={Linear functionals of eigenvalues of random matrices},
   journal={Trans. Amer. Math. Soc.},
   volume={353},
   date={2001},
   number={7},
   pages={2615--2633},
}

\bib{DM}{article}{
   author={Dobrushin, R. L.},
   author={Major, P.},
   title={Non-central limit theorems for nonlinear functionals of Gaussian
   fields},
   journal={Z. Wahrsch. Verw. Gebiete},
   volume={50},
   date={1979},
   number={1},
   pages={27--52},
}

\bib{FL}{article}{
   author={Fenzl, Marcel},
   author={Lambert, Gaultier},
   title={Precise deviations for disk counting statistics of invariant determinantal processes},
   journal={Int. Math. Res. Not. IMRN},
   year = {2021},
   url = {https://doi.org/10.1093/imrn/rnaa341},
   doi = {10.1093/imrn/rnaa341},
}

\bib{FyB}{article}{
   author={Fyodorov, Yan V.},
   author={Bouchaud, Jean-Philippe},
   title={Freezing and extreme-value statistics in a random energy model
   with logarithmically correlated potential},
   journal={J. Phys. A},
   volume={41},
   date={2008},
   number={37},
   pages={372001, 12},
}

\bib{FyK}{article}{
   author={Fyodorov, Yan V.},
   author={Keating, Jonathan P.},
   title={Freezing transitions and extreme values: random matrix theory, and
   disordered landscapes},
   journal={Philos. Trans. R. Soc. Lond. Ser. A Math. Phys. Eng. Sci.},
   volume={372},
   date={2014},
   number={2007},
   pages={20120503, 32},
}

\bib{GN}{article}{
   author={Ghosh, Subhroshekhar},
   author={Nishry, Alon},
   title={Point processes, hole events, and large deviations: random complex
   zeros and Coulomb gases},
   journal={Constr. Approx.},
   volume={48},
   date={2018},
   number={1},
   pages={101--136},
}

\bib{HM}{article}{
   author={Hattori, Tetsuya},
   author={Matsumoto, Kohji},
   title={A limit theorem for Bohr-Jessen's probability measures of the
   Riemann zeta-function},
   journal={J. Reine Angew. Math.},
   volume={507},
   date={1999},
   pages={219--232},
}

\bib{GAFbook}{book}{
   author={Hough, J. Ben},
   author={Krishnapur, Manjunath},
   author={Peres, Yuval},
   author={Vir\'{a}g, B\'{a}lint},
   title={Zeros of Gaussian analytic functions and determinantal point
   processes},
   series={University Lecture Series},
   volume={51},
   publisher={American Mathematical Society, Providence, RI},
   date={2009},
   pages={x+154},
   isbn={978-0-8218-4373-4},
}

\bib{Jan}{book}{
   author={Janson, Svante},
   title={Gaussian Hilbert spaces},
   series={Cambridge Tracts in Mathematics},
   volume={129},
   publisher={Cambridge University Press, Cambridge},
   date={1997},
   pages={x+340},
   isbn={0-521-56128-0},
}

\bib{Johnson}{article}{
   author={Johnson, Guy, Jr.},
   title={Harmonic functions on the unit disc. I},
   journal={Illinois J. Math.},
   volume={12},
   number={3},
   date={1968},
   pages={366--385},
}

\bib{KN}{article}{
   author={Kiro, Avner},
   author={Nishry, Alon},
   title={Fluctuations for zeros of Gaussian Taylor series},
   journal={J. Lond. Math. Soc. (2)},
   volume={104},
   date={2021},
   number={3},
   pages={1172--1203},
}

\bib{AHKR}{article}{
   author={Koliander, G\"unther},
   author={Abreu, Luis Daniel},
   author={Haimi, Antti},
   author={Romero, Jos\'e Luis},
   booktitle={2019 13th International conference on Sampling Theory and Applications (SampTA)}, 
   title={Filtering the Continuous Wavelet Transform Using Hyperbolic Triangulations}, 
   year={2019},
   pages={1-4},
}

\bib{KrishPhd}{thesis}{
   author={Krishnapur, Manjunath},
   title={Zeros of Random Analytic Functions},
   eprint={arXiv:1911.13051 [math.PR]},
   year={2006},
   organization={University of California, Berkeley},
   type={PhD thesis}
}

\bib{Leb}{article}{
   author={Leb\oe uf, P.},
   title={Random analytic chaotic eigenstates},
   journal={J. Statist. Phys.},
   volume={95},
   date={1999},
   number={3-4},
   pages={651--664},
}

\bib{Mand}{article}{
 author = {Mandelbrot, Benoit B.},
 author= {Van Ness, John W.},
 journal = {SIAM Review},
 number = {4},
 pages = {422--437},
 title = {Fractional Brownian Motions, Fractional Noises and Applications},
 volume = {10},
 year = {1968}
}

\bib{MPRW}{article}{
   author={Marinucci, Domenico},
   author={Peccati, Giovanni},
   author={Rossi, Maurizia},
   author={Wigman, Igor},
   title={Non-universality of nodal length distribution for arithmetic
   random waves},
   journal={Geom. Funct. Anal.},
   volume={26},
   date={2016},
   number={3},
   pages={926--960},
}

\bib{MRW}{article}{
   author={Marinucci, Domenico},
   author={Rossi, Maurizia},
   author={Wigman, Igor},
   title={The asymptotic equivalence of the sample trispectrum and the nodal
   length for random spherical harmonics},
   language={English, with English and French summaries},
   journal={Ann. Inst. Henri Poincar\'{e} Probab. Stat.},
   volume={56},
   date={2020},
   number={1},
   pages={374--390},
}

\bib{NS}{article}{
   author={Nazarov, Fedor},
   author={Sodin, Mikhail},
   title={Fluctuations in random complex zeroes: asymptotic normality revisited},
   journal={Int. Math. Res. Not. IMRN},
   date={2011},
   number={24},
   pages={5720--5759},
}

\bib{NS2}{article}{
   author={Nazarov, F.},
   author={Sodin, M.},
   title={Correlation functions for random complex zeroes: strong clustering
   and local universality},
   journal={Comm. Math. Phys.},
   volume={310},
   date={2012},
   number={1},
   pages={75--98},
}

\bib{fourth mom}{article}{
   author={Peccati, Giovanni},
   author={Tudor, Ciprian A.},
   title={Gaussian limits for vector-valued multiple stochastic integrals},
   conference={
      title={S\'{e}minaire de Probabilit\'{e}s XXXVIII},
   },
   book={
      series={Lecture Notes in Math.},
      volume={1857},
      publisher={Springer, Berlin},
   },
   date={2005},
   pages={247--262},
}

\bib{PV}{article}{
   author={Peres, Yuval},
   author={Vir\'{a}g, B\'{a}lint},
   title={Zeros of the i.i.d. Gaussian power series: a conformally invariant
   determinantal process},
   journal={Acta Math.},
   volume={194},
   date={2005},
   number={1},
   pages={1--35},
}

\bib{P}{article}{
   author={Powell, Ellen},
   title={Critical Gaussian multiplicative chaos: a review},
   journal={Markov Process. Related Fields},
   volume={27},
   date={2021},
   number={4},
   pages={557--606},
}

\bib{Remy}{article}{
   author={Remy, Guillaume},
   title={The Fyodorov-Bouchaud formula and Liouville conformal field
   theory},
   journal={Duke Math. J.},
   volume={169},
   date={2020},
   number={1},
   pages={177--211},
}

\bib{RV}{article}{
   author={Rhodes, R\'{e}mi},
   author={Vargas, Vincent},
   title={Gaussian multiplicative chaos and applications: a review},
   journal={Probab. Surv.},
   volume={11},
   date={2014},
   pages={315--392},
}

\bib{R}{article}{
   author={Rossi, Maurizia},
   title={Random nodal lengths and Wiener chaos},
   conference={
      title={Probabilistic methods in geometry, topology and spectral
      theory},
   },
   book={
      series={Contemp. Math.},
      volume={739},
      publisher={Amer. Math. Soc., Providence, RI},
   },
   date={2019},
   pages={155--169},
}

\bib{Slud 91}{article}{
   author={Slud, Eric},
   title={Multiple Wiener-It\^{o} integral expansions for level-crossing-count
   functionals},
   journal={Probab. Theory Related Fields},
   volume={87},
   date={1991},
   number={3},
   pages={349--364},
}

\bib{Slud94}{article}{
   author={Slud, Eric V.},
   title={MWI representation of the number of curve-crossings by a
   differentiable Gaussian process, with applications},
   journal={Ann. Probab.},
   volume={22},
   date={1994},
   number={3},
   pages={1355--1380},
}

\bib{ST1}{article}{
   author={Sodin, Mikhail},
   author={{Ts}irelson, Boris},
   title={Random complex zeroes. I. Asymptotic normality},
   journal={Israel J. Math.},
   volume={144},
   date={2004},
   pages={125--149},
}

\end{biblist}

\end{document}